\newtheorem{thm}{Theorem}
\newtheorem{lem}{Lemma}
\newtheorem{prop}{Proposition}
\newtheorem{cor}{Corollary}
\newtheorem{exm}{Example}
\newtheorem{dfn}{Definition}
\newtheorem{remark}{Remark}
\newcommand{\footremember}[2]{%
    \footnote{#2}
    \newcounter{#1}
    \setcounter{#1}{\value{footnote}}%
}
\newcommand{\footrecall}[1]{%
    \footnotemark[\value{#1}]%
} 
\begin{document}

\begin{center}
{\Large 
 A generalization of rotation of binary sequences
 and its applications to  toggle dynamical systems
}

 \bigskip
 Erika Hanaoka\footremember{Tsuda}{Department of Computer Science, Tsuda University, Tsuda-machi 2-1-1, Kodaira, Japan} 
 and Taizo Sadahiro\footrecall{Tsuda}
\end{center}

\begin{abstract}
This paper generalizes the results of Joseph
and Roby\cite{JR} on a toggle dynamical system 
whose state space consists of independent
sets on a path graph.
Along the proof,
a simple generalization of
the rotation (or circular shift) of the binary sequences
arises.
We show each orbit of
this generalized rotation has a certain
statistical symmetry.
\end{abstract}
Keywords: Toggle dynamical system, Generalized independent set, Generalized rotation.

\section{Introduction}
The idea of a toggle group (a group generated by products of simple involutions) was introduced by Cameron and Fon-der-Flaass \cite{cameron1995orbits} 
to analyze certain actions on order ideals of posets, and has been generalized to many other contexts, see e.g., Striker \cite{striker2016rowmotion}. 
The {\em toggle} map is defined as follows:
\begin{dfn}
 Let $E$ be a finite set and ${\mathcal L}$ be a family of subsets of $E$.
 For each  $e\in E$, the toggle $\tau_e:{\mathcal L}\to{\mathcal L}$ is defined by
 \[
 \tau_e(X)= \begin{cases}
	   X\cup \{e\} & \mbox{ if } e\not\in X \mbox{ and } X\cup\{e\}\in {\mathcal L}\\
	   X\backslash \{e\} & \mbox{ if } e\in X \mbox{ and } X\backslash\{e\}\in {\mathcal L}\\
	   X & \mbox{ otherwise.} 
	 \end{cases}
 \]
\end{dfn}
The group generated by $\{\tau_e\,|\, e\in E\}$ is called the {\em toggle group}.
Many important actions on combinatorial objects have been shown
to have interpretations through the toggle groups\cite{striker2012promotion}.

Repeatedly applying a fixed element of the toggle group eventually cycles as each toggle is a bijection.
The {\em path graph} of size $N$ is the undirected graph equipped with the vertex set
$\{0,1,2,\ldots,N-1\}$ and the edge set $\{\{0,1\},\{1,2\},\cdots,\{N-2,N-1\}\}$.
Propp conjectured that around each orbit of independent sets on the path graph
under the action of $\varphi=\tau_{N-1}\circ\cdots\circ \tau_1\circ\tau_0$,
the number of times vertex $i$ occurs is same as vertex $N-i-1$. 
Joseph and Roby proved this conjecture (and more) in \cite{JR} using the notion of "snakes" introduced by Haddadan \cite{haddadan2014some}.

In this paper, we  prove a generalization (Theorem $\ref{thm:thirdmain}$), where the independent sets are replaced with
{\em more} independent sets, that is, a family of subsets of the vertices $\{0,1,\ldots,N-1\}$ 
not containing any pair of vertices whose mutual distance
is less than or equal to an integer $m$.
Therefore the ordinary independent sets studied by Joseph and Roby\cite{JR} is the case with $m=1$.
We prove this generalization by reducing the problem to a property of a generalized rotation of bit-strings (Theorem $\ref{thm:main}$).

%The rotation (or circular shift) of the binary sequences
%is a fundamental operation which plays crucial
%roles in various kinds of computations.
%This paper introduces and  studies a simple and natural generalization of
%the rotation of the binary sequences.
%Although the generalization is very simple,
%the generalized rotation
%exhibits very complicated orbit structures.
%Our main aim in this paper is to show each orbit of
%this generalized rotation has a certain
%statistical symmetry.
%Joseph and Roby\cite{JR} answered affirmatively
%one of Propp's question on a toggle dynamical system 
%\cite{einstein2014piecewise,JR,striker2012promotion}
%by reducing the
%toggle dynamical system to the dynamical system
%of the ordinary rotation.
%Our generalized rotation naturally
%arises when we generalize the Propp's question.
%We will discuss this application
%later in Section $3$.

\section{Generalized rotation}
\subsection{Generalized rotation}
Throughout this paper, $m,n$ denote  positive integers with $m\leq n$.
%and $m$ denotes a positive integer not greater than $n$.
For a word $w$ over a finite alphabet,
$w_i$ denotes  the $i$-th letter of $w$ and $w_{[i,j]}$ denotes
the subword of the form $w_iw_{i+1}\cdots w_{j}$.
The length $n$ of the word $w$ is denoted by $|w|$.
For a finite alphabet $A$,
the set of finite words over $A$ is denoted by $A^*$.
%Let $n$ be a positive integer and $m$ a positive integer smaller than
%or equal to $n$.
The {\em generalized rotation} $\rho: \{0,1\}^n \rightarrow \{0,1\}^n$ 
is defined as follows: Let $w=w_0w_1w_2\cdots w_{n-1}\in\{0,1\}^n$
be a word over $\{0,1\}$ of length $n$.
Then, 
\[
 \rho(w) = \begin{cases}
	    w_{k+1}w_{k+2}\cdots w_{n-1}0\stackrel{k}{\overbrace{1\cdots1}} & \mbox{ if there exists } k<m \mbox{ s.t. } w_{[0,k]}=\stackrel{k}{\overbrace{1\cdots1}}0\\
	    w_mw_{m+1}\cdots w_{n-1}\stackrel{m}{\overbrace{1\cdots1}} & \mbox{ otherwise. }
	   \end{cases}
\]
It is clear that $\rho$ is a bijection and $\rho$ is the ordinary rotation for $m=1$.
Throughout the paper,  $p$ denotes  the smallest integer such that
$\rho^p(w)=w$,  that is, $p=\#\{\rho^k(w)|k\in{\mathbb Z}\}$.
We call the sequence $w, \rho^1(w), \rho^2(w),\ldots,\rho^{p-1}(w)$ the
{\em $\rho$-orbit} of $w$.

\begin{exm}
\label{exm:first}
%When $m=2$, we have
%\[
% \rho(w) =
% \begin{cases}
%  w_1w_2\cdots w_n0 & w_0 = 0,\\
%  w_2w_3\cdots w_n01 & w_0w_1 = 10,\\
%  w_2w_3\cdots w_n11 & w_0w_1 = 11.
% \end{cases}
%\]
%The word $w=1011110\in\{0,1\}^7$ have the $\rho_2$-orbit
%of length $12:$
%\[
% \begin{array}{c|c}
%  k & \rho_2^k(w)\\
%  \hline
%0 & 1    0    1    1    1    1    0 \\
%1 & 1    1    1    1    0    0    1 \\
%2 & 1    1    0    0    1    1    1 \\
%3 & 0    0    1    1    1    1    1 \\
%4 & 0    1    1    1    1    1    0 \\
%5 & 1    1    1    1    1    0    0 
% \end{array}
% ~~~~
% \begin{array}{c|c}
%  k & \rho_2^k(w)\\
%  \hline
%6 & 1    1    1    0    0    1    1 \\
%7 & 1    0    0    1    1    1    1 \\
%8 & 0    1    1    1    1    0    1 \\
%9 & 1    1    1    1    0    1    0 \\
%10 & 1    1    0    1    0    1    1\\
%11 & 0    1    0    1    1    1    1
% \end{array}
%\]
When $m=3$, we have
\[
 \rho(w) =
 \begin{cases}
  w_1w_2\cdots w_n0 & w_0 = 0,\\
  w_2w_3\cdots w_n01 & w_0w_1 = 10,\\
  w_3w_4\cdots w_n011 & w_0w_1w_2 = 110,\\
  w_3w_4\cdots w_n111 & w_0w_1w_2 = 111.
 \end{cases}
\]
$w=1011110$ has the $\rho$-orbit of $w$ of length $9:$
\[
 \begin{array}{c|c}
  k & \rho^k(w)\\
  \hline
0 & 1    0    1    1    1    1    0\\
1 & 1    1    1    1    0    0    1\\
2 & 1    0    0    1    1    1    1\\
3 & 0    1    1    1    1    0    1\\
4 & 1    1    1    1    0    1    0\\
5 & 1    0    1    0    1    1    1\\
6 & 1    0    1    1    1    0    1\\
7 & 1    1    1    0    1    0    1\\
8 & 0    1    0    1    1    1    1
 \end{array}			   
\]\qed
\end{exm}

\subsection{Statistical symmetry of the orbits}
One of our aims in this paper is to show a statistical property
of the cumulative sum 
\[
 \rho^k(w)_0 + \rho^k(w)_1 + \cdots + \rho^k(w)_{j-1},
\]
for $j=1,2,\ldots,n$ where $\rho^k(w)_i$ stands for the $i$-th letter (or bit)
 of the word $\rho^k(w)$.
Here we recall the notation and definition of the multisets
(see, e.g., \cite{StanleyEC1}).
A {\em multiset} is intuitively a set with repeated elements.
More precisely, a {\em multiset} $M$ on a set $S$ is a pair $(S,\nu)$,
where $\nu:S\to {\mathbb N}$ is the {\em multiplicity} function.
For example $M=\{a,b,b,c,c,c\}$ is a multiset on $S=\{a,b,c\}$ with
the multiplicity $\nu(a)=1, \nu(b)=2, \nu(c)=3$.
Let $a(w)$ be the number of the digits $1$ in $w$.
We define  $L^{(j)}$ as the multiset on $\{0,1,\ldots,a(w)\}$
consisting of the left cumulative sums:
\[
 L^{(j)}
 =
\left\{
   \sum_{i=0}^{j-1}\rho^k(w)_i 
   \,\middle|\,
   k=0,1,\ldots,p-1
  \right\},
\]  
for $j=1,2,\ldots,n$.
We also define $R^{(j)}$ as the multiset consisting of the right cumulative sums:
\[
R^{(j)}
=
  \left\{
   \sum_{i=0}^{j-1}\rho^k(w)_{n-i-1} 
   \,\middle|\,
   k=0,1,\ldots,p-1
   \right\}.
\]
For convenience, we define
$L^{(0)}=R^{(0)}=\big\{\stackrel{p}{\overbrace{0,0,\cdots, 0}}\big\}$,
which consists of only $0$s.
We also define
\[
 \nu_{L^{(j)}}\left(s\right)
 =
 \#\left\{k\in\{0,1,\ldots,p-1\}\,\middle|\, \sum_{i=0}^{j-1}\rho^k(w)_i = s\right\},
\]
and
\[
 \nu_{R^{(j)}}\left(s\right)
 =
 \#\left\{k\in\{0,1,\ldots,p-1\}\,\middle|\, \sum_{i=0}^{j-1}\rho^k(w)_{n-i-1}  = s\right\}.
\]
%We occasionally write simply $L^{(j)}$ (resp. $R^{(j)}$)
%instead of $L^{(j)}_m(w)$ (resp. $R^{(j)}(w)$)
%when the context is clear. 
%One of our main results in this paper is the following.
\begin{thm}
\label{thm:main}
 Let $w\in\{0,1\}^n$ be a word over $\{0,1\}$ of length $n$.
 Then, for $j=0,1,\ldots,n-1$,
 as multisets
 \begin{equation}
  \label{eq:mainth}
   L^{(j)} = R^{(j)},
 \end{equation}
 i.e., $\nu_{L^{(j)}}(s)=\nu_{R^{(j)}}(s)$
 for $s\in\{0,1,\ldots,a(w)\}$.
\end{thm}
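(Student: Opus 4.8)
The plan is to deduce the statement from a reversal symmetry of $\rho$ and from a structural description of its orbits. Write $r$ for the reversal map, $r(w)_{i}=w_{n-1-i}$; since $\sum_{i=0}^{j-1}w_{n-1-i}=\sum_{i=0}^{j-1}r(w)_{i}$, applying $r$ to each word carries left cumulative sums to right cumulative sums, so $R^{(j)}$ of the $\rho$-orbit of $w$ equals $L^{(j)}$ of the image of that orbit under $r$. I would first record $\rho^{-1}$ explicitly: a word $v$ arises from the second branch of $\rho$ exactly when it ends in $1^{m}$, and then $\rho^{-1}(v)=1^{m}v_{0}\cdots v_{n-m-1}$; otherwise $v$ has $t<m$ trailing $1$'s and $\rho^{-1}(v)=1^{t}0\,v_{0}\cdots v_{n-t-2}$. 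Comparing this with $r\circ\rho\circ r$ and splitting into the two cases (if $v$ ends in $1^{m}$ then $r(v)$ begins with $1^{m}$; if $v$ ends in $01^{t}$ then $r(v)$ begins with $1^{t}0$), in each case $\rho$ acts on $r(v)$ through the corresponding branch, and reversing the output cancels the outer reversals to leave $\rho^{-1}(v)$. Hence
\[
 r\circ\rho\circ r=\rho^{-1}.
\]

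It follows that $r\circ\rho^{k}=\rho^{-k}\circ r$ for every $k$, so $r$ maps the $\rho$-orbit of $w$ bijectively onto the $\rho$-orbit of $r(w)$, reversing the cyclic order; with the previous remark this gives $R^{(j)}(\text{orbit of }w)=L^{(j)}(\text{orbit of }r(w))$ for all $j$. The theorem is therefore equivalent to the equality of $L^{(j)}$ over the orbits of $w$ and of $r(w)$, and, because $r(\rho^{k}w)=\rho^{-k}(r(w))$, this holds as soon as $r(w)$ itself lies in the $\rho$-orbit of $w$: if $r(w)=\rho^{c}(w)$ then $R^{(j)}$ equals the multiset $\{\sum_{i=0}^{j-1}\rho^{c-k}(w)_{i}\}_{0\le k<p}$, which, since $k\mapsto c-k$ permutes $\mathbb Z/p$, is $L^{(j)}$. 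For $m=1$ the orbit need not be reversal-closed, but there $\rho$ is the ordinary rotation, the length-$j$ suffix of $\rho^{k}(w)$ is the length-$j$ prefix of $\rho^{k-j}(w)$, and $R^{(j)}=L^{(j)}$ is immediate; so one may assume $m\ge 2$.

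For $m\ge2$ the goal becomes to show that every $\rho$-orbit is closed under reversal, and I would attack this by following how $\rho$ processes a word piece by piece: peeling off in turn the prefixes that $\rho$ removes cuts a word into ``staircase'' pieces $1^{t}0$ (with $0\le t\le m-1$) or $1^{m}$, and one step of $\rho$ deletes the first piece, reverses it, and reattaches it at the end. Tracing these pieces around the orbit should organise it into a single circular arrangement of pieces from which the orbit of $w$ is read off by taking length-$n$ stretches in one direction and the orbit of $r(w)$ by taking them in the other -- a description visibly invariant under reversing the arrangement, which forces $r(w)$ into the orbit of $w$. The work, and the step I expect to be the main obstacle, is justifying this: the peeling ceases to respect piece boundaries once reversed pieces reach the front (a reversed piece $01^{t}$ surrenders only its leading $0$, after which the residual $1^{t}$ fuses with the next piece, and a piece may straddle the seam between the old word and the freshly appended material), so tracking which piece sits where, and in which orientation, after $k$ iterations needs a careful induction -- for instance on $n$, stripping the prefix $1^{t}0$ before the first zero and reducing to the generalized rotation of length $n-t-1$ -- together with the ensuing verification that the resulting circular arrangement is symmetric under reversal. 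Granting this, reversing the arrangement sends each length-$j$ prefix to a length-$j$ suffix of equal digit sum, and the theorem follows.
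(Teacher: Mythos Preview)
Your central claim for $m\ge 2$---that every $\rho$-orbit is closed under reversal---is false, so the entire strategy collapses at that point. For $m=2$ and $w=00100101$ the $\rho$-orbit is
\[
00100101,\ 01001010,\ 10010100,\ 01010001,\ 10100010,\ 10001001,
\]
which does not contain $r(w)=10100100$. The paper in fact records precisely this observation in its Remark~1: the identity $\rho^{-1}=r\circ\rho\circ r$ would give an easy proof \emph{if} orbits were reversal-closed, but they need not be. So whatever induction you set up on the ``staircase pieces'', it cannot conclude that $r(w)$ lies in the orbit of $w$; the difficulty you yourself flag---reversed pieces $01^{t}$ arriving at the front, surrendering only their leading $0$, and fusing with the next piece---is not a technical nuisance but the reason the claim is simply not true.

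The paper's proof is genuinely more intricate. It concatenates the successively removed prefixes into a single cyclic word $\overline{w}$ of length $l$ and the appended suffixes into a companion $\widehat{w}$ (each block of $\widehat{w}$ is the reverse of the corresponding block of $\overline{w}$, and globally $\widehat{w}$ is an ordinary rotation of $\overline{w}$). It then classifies each index $i\in\{0,\dots,l-1\}$ by type $0$, $H$, or $T$ and decomposes $L^{(j)}$ and $R^{(j)}$ into nine pieces $M^{(j)}_{a,b}$ and $\widehat M^{(j)}_{a,b}$ according to the types of the two endpoints of the length-$j$ window. The equalities $M^{(j)}_{a,b}=\widehat M^{(j)}_{a,b}$ are proved by induction on $j$: the cases $j\le m$ are handled directly, and the key step for $j>m$ is a shift identity expressing $M^{(j)}_{0,T}\cup(M^{(j)}_{T,T}-1)$ in terms of $\widehat M^{(j-m)}_{T,0}\cup\widehat M^{(j-m)}_{T,T}$, which feeds the induction.
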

As an immediate corollary of this theorem, we have the following.
\begin{cor}
 \begin{equation}
  \label{eq:cor}
   \sum_{k=0}^{p-1}\rho^k(w)_j
   =
   \sum_{k=0}^{p-1}\rho^k(w)_{n-1-j}.
 \end{equation}
\end{cor}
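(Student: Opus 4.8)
The plan is to derive \eqref{eq:cor} from Theorem~\ref{thm:main} by a one-step telescoping (differencing) argument applied to the sums of the elements of the cumulative-sum multisets $L^{(j)}$ and $R^{(j)}$.

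First I would introduce the total sums
\[
 S_L(j):=\sum_{k=0}^{p-1}\sum_{i=0}^{j-1}\rho^k(w)_i,
 \qquad
 S_R(j):=\sum_{k=0}^{p-1}\sum_{i=0}^{j-1}\rho^k(w)_{n-i-1},
\]
which are precisely the sums of all elements (counted with multiplicity) of $L^{(j)}$ and of $R^{(j)}$. Since equal multisets have equal element-sums, Theorem~\ref{thm:main} immediately gives $S_L(j)=S_R(j)$ for every $j\in\{0,1,\ldots,n-1\}$.

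Next I would observe that the identity $S_L(j)=S_R(j)$ also holds at $j=n$. Indeed, $\rho$ preserves the number of $1$'s in a word: in the first case of its definition the prefix $1\cdots1\,0$ of $w$ (carrying $k$ ones) is replaced by the suffix $0\,1\cdots1$ of $\rho(w)$ (again $k$ ones), in the second case the prefix $1\cdots1$ (carrying $m$ ones) becomes the suffix $1\cdots1$ (again $m$ ones), and the remaining letters are merely shifted. Hence every word in the $\rho$-orbit of $w$ has exactly $a(w)$ ones, so both $L^{(n)}$ and $R^{(n)}$ equal the multiset consisting of $p$ copies of $a(w)$, and $S_L(n)=S_R(n)=p\cdot a(w)$. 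Therefore $S_L(j)=S_R(j)$ for all $j\in\{0,1,\ldots,n\}$.

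Finally, fix $j\in\{0,1,\ldots,n-1\}$. Interchanging the two finite sums,
\[
 S_L(j+1)-S_L(j)=\sum_{k=0}^{p-1}\left(\sum_{i=0}^{j}\rho^k(w)_i-\sum_{i=0}^{j-1}\rho^k(w)_i\right)=\sum_{k=0}^{p-1}\rho^k(w)_j,
\]
and, since the inner sum defining $S_R(j)$ runs over the positions $n-1,n-2,\ldots,n-j$ while the extra term with $i=j$ is the letter at position $n-j-1$,
\[
 S_R(j+1)-S_R(j)=\sum_{k=0}^{p-1}\rho^k(w)_{n-j-1}.
\]
Subtracting the identity $S_L(j)=S_R(j)$ from $S_L(j+1)=S_R(j+1)$ then yields exactly \eqref{eq:cor}. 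The one point that needs attention is that Theorem~\ref{thm:main} is stated only up to $j=n-1$, so the differencing at $j=n-1$ requires the value at $j=n$; this is supplied by the trivial remark above that $\rho$ fixes the number of $1$'s. Beyond this small piece of bookkeeping I do not anticipate any real obstacle, the corollary being a direct consequence of the theorem.
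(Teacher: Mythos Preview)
Your argument is correct and is exactly the natural expansion of the paper's bare phrase ``an immediate corollary of this theorem'': differencing the element-sums $S_L(j)=S_R(j)$ of the multisets $L^{(j)}=R^{(j)}$ yields the claim. Your observation that the identity extends to $j=n$ (since $\rho$ preserves $a(w)$) cleanly handles the endpoint $j=n-1$, which the paper leaves implicit.
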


\begin{exm} Let $m=3$ and
 $w=1011110\in\{0,1\}^7$, same as in Example $\ref{exm:first}$.
Then we have the following tables of left cumulative sums $L^{(j)}$ and
right cumulative sums $R^{(j)}$.
\begin{table}[H]
\begin{center}
\begin{tabular}{r|rrrrrrrrr}
$k\backslash j$ & 0 & 1 & 2 & 3 & 4 & 5 & 6 & 7 \\ 
  \hline
0  & 0 & 1 & 1 & 2 & 3 & 4 & 5 & 5  \\   
1  & 0 & 1 & 2 & 3 & 4 & 4 & 4 & 5  \\   
2  & 0 & 1 & 1 & 1 & 2 & 3 & 4 & 5  \\ 
3  & 0 & 0 & 1 & 2 & 3 & 4 & 4 & 5  \\ 
4  & 0 & 1 & 2 & 3 & 4 & 4 & 5 & 5  \\ 
5  & 0 & 1 & 1 & 2 & 2 & 3 & 4 & 5  \\ 
6  & 0 & 1 & 1 & 2 & 3 & 4 & 4 & 5  \\ 
7  & 0 & 1 & 2 & 3 & 3 & 4 & 4 & 5  \\ 
8  & 0 & 0 & 1 & 1 & 2 & 3 & 4 & 5  \\
   \hline
\end{tabular}
~~~
\begin{tabular}{r|rrrrrrrr}
$k\backslash j$ & 0 & 1 & 2 & 3 & 4 & 5 & 6 & 7 \\ 
  \hline
0 & 0 & 0 & 1 & 2 & 3 & 4 & 4 & 5 \\
1 & 0 & 1 & 2 & 3 & 4 & 4 & 5 & 5 \\
2 & 0 & 1 & 1 & 2 & 2 & 3 & 4 & 5 \\
3 & 0 & 1 & 1 & 2 & 3 & 4 & 4 & 5 \\
4 & 0 & 1 & 2 & 3 & 3 & 4 & 4 & 5 \\
5 & 0 & 0 & 1 & 1 & 2 & 3 & 4 & 5 \\
6 & 0 & 1 & 1 & 2 & 3 & 4 & 5 & 5 \\
7 & 0 & 1 & 2 & 3 & 4 & 4 & 4 & 5 \\
8 & 0 & 1 & 1 & 1 & 2 & 3 & 4 & 5 \\
   \hline
\end{tabular}
\end{center}
\caption{Left: table of left cumulative sums $\sum_{i=0}^{j-1}\rho^k(w)_i$ for $m=3$,
 Right: table of right cumulative sums $\sum_{i=0}^{j-1}\rho^k(w)_{n-i-1}$ for $m=3$.}
\end{table}
We can summarize these tables by the {\em frequency tables},
that is, tables whose $j$-th column vector is
\[
\left(\nu_{L^{(j)}}(0), \nu_{L^{(j)}}(1), \ldots, \nu_{L^{(j)}}(5)\right)
=
\left(\nu_{R^{(j)}}(0), \nu_{R^{(j)}}(1), \ldots, \nu_{R^{(j)}}(5)\right).
\]

%Theorem $\ref{thm:main}$ implies that each $j$-th column of the both tables above have the
%same frequency tables:

\begin{figure}[H]
\begin{center}
\begin{tabular}{r|rrrrrrrr}
 $k~\backslash~ j$ & 0 &  1 & 2 & 3 & 4 & 5 & 6 & 7 \\ 
  \hline
 0 & 9 &   2 &   0 &   0 &   0 &   0 &   0 & 0\\ 
 1 & 0 &   7 &   6 &   2 &   0 &   0 &   0 & 0\\ 
 2 & 0 &   0 &   3 &   4 &   3 &   0 &   0 & 0\\ 
 3 & 0 &   0 &   0 &   3 &   4 &   3 &   0 & 0\\ 
 4 & 0 &   0 &   0 &   0 &   2 &   6 &   7 & 0\\ 
 5 & 0 &   0 &   0 &   0 &   0 &   0 &   2 & 9\\ 
   \hline
\end{tabular}
~~
%\makebox[5cm][l]{\hspace*{0cm}\raisebox{-10ex}{\includegraphics[bb=7 7 1435 762,width=6cm]{cumsum7-2.pdf}}}
\caption{Frequency table of cumulative sums $\sum_{i=0}^{j-1}\rho^k(1011110)_i$ for $j=0,1,2,\ldots,7$,
i.e., table of $\nu_{L^{(j)}}(s)$ where $m=3$. 
%and Right: its histogram representation for $j=1,2,\ldots,6$.
}
\label{fig:freqtab}
\end{center}
\end{figure}

\end{exm}

Since the number $a(w)$ of digits $1$ in $\rho^{k}(w)$ does not depend
on $k$, we have
\[
 \rho^k(w)_0 + \rho^k(w)_1 + \cdots + \rho^k(w)_{j-1} = s
 ~~\Longleftrightarrow~~
 \rho^k(w)_{n-1} + \rho^k(w)_{n-2} + \cdots + \rho^k(w)_{j} = a(w)-s.
\]
Therefore, we have
\begin{equation}
 \label{eq:cumsumsym}
 \nu_{{L}^{(j)}}(s) = \nu_{R^{(n-j)}}(a(w)-s) = \nu_{L^{(n-j)}}(a(w)-s).
\end{equation}
By theorem $\ref{thm:main}$ and the relation $(\ref{eq:cumsumsym})$, if  $n$ is even, then
we have
\[
 \nu_{{L}^{(\frac{n}{2})}}(s) = \nu_{L^{(\frac{n}{2})}}(a(w)-s), 
\]
where $a(w)$ is the number of digits $1$ in $w$.

\begin{remark}
The reverse ${\rm Rev}(w)$ of a word $w=w_1w_2\cdots w_n\in \{0,1\}^n$ is defined by
${\rm Rev}(w) = w_nw_{n-1}\cdots w_2w_1$.
Then, it is clear that
\begin{equation}\label{eq:fund}
 \rho^{-1} = {\rm Rev}\circ\rho\circ{\rm Rev}.
\end{equation}
We remark that Theorem $1$ would be easily shown if it were true 
that the $\rho$-orbit of $w$ contains its reverse ${\rm Rev}(w)$:
Let $j$ be the smallest positive integer such that ${\rm Rev}(w)=\rho^j(w)$
and let ${\mathcal O}=\{\rho^k(w)\,|\,k\in{\mathbb Z}\}$. Then it is clear that
$
{\mathcal O}
 = \{\rho^{-k}(w)\,|\,k\in {\mathbb Z}\}
$
and
$
{\mathcal O}
 = \{\rho^{j+k}(w)\,|\,k\in {\mathbb Z}\}
 = \{\rho^{k}\circ {\rm Rev}(w)\,|\,k\in {\mathbb Z}\}
$.
From $(\ref{eq:fund})$ and the fact ${\rm Rev}^2$ is the identity map,
we have
$
 \rho^{-k} = {\rm Rev}\circ\rho^k\circ {\rm Rev}.
$
Let ${\rm Rev}({\mathcal O})$ the set $\{{\rm Rev}(w)\,|\,w\in{\mathcal O}\}$.
Then,  we have
\begin{eqnarray*}
{\rm Rev}({\mathcal O})
 & = & 
 \{{\rm Rev}\circ\rho^{k}(w)\,|\,k\in {\mathbb Z}\}
 =
 \{{\rm Rev}\circ\rho^{k}\circ{{\rm Rev}}(w)\,|\,k\in {\mathbb Z}\}\\
 & =& 
 \{\rho^{-k}(w)\,|\,k\in {\mathbb Z}\}
 =
 \{\rho^{k}(w)\,|\,k\in {\mathbb Z}\}
 =
 {\mathcal O},
\end{eqnarray*}
which implies Theorem $1$.
However, there are words $w$
whose $\rho$-orbit does not contain ${\rm Rev}(w)$.
For example, when $m=2$, the $\rho$-orbit of
$w=00100101$ is 
\[
 00100101,~
 01001010,~
 10010100,~
 01010001,~
 10100010,~
 10001001,
\]
which does not contain ${\rm Rev}(w)$.
\end{remark}

Let $w=w_0w_1\cdots w_{n-1}\in\{0,1\}^n$ be a word of length $n$.
Then, the subword $w_{i}w_{i+1}\cdots w_{j}$ of $w$ is denoted by
$w_{[i,j]}$.
%Let $m$ be a positive integer not greater than $n$.
We define an extension sequence of words $w^{(0)}, w^{(1)}, w^{(2)},\ldots$ as follows.
We define $w^{(0)}=w$, and for $j\geq 0$,  $w^{(j+1)}$ is obtained as an
extension of $w^{(j)}$ defined by
\[
 w^{(j+1)} =
\begin{cases}
 w^{(j)}0\stackrel{k}{\overbrace{1\cdots1}} &   
 \mbox{ if there exists } k < m \mbox{ s.t. } \rho^j(w)_{[0,k]}=\stackrel{k}{\overbrace{11\cdots1}}0\\
 w^{(j)}\stackrel{m}{\overbrace{1\cdots1}} & \mbox{ otherwise. }
\end{cases}
\]
Therefore $w^{(k)}$ contains $\rho^k(w)$ as its suffix of final $n$ bits.
Let $p$ be the size of $\rho$-orbit of $w$.
Then, $p$ is the smallest non-negative integer such that
$w$ is  a  suffix of $w^{(p)}$.
Let $n+l$ be the length of the word $w^{(p)}$.
Then, we define 
$\overline{w} = \overline{w}_0\overline{w}_1\cdots\overline{w}_{l-1}$ to
be the word obtained by removing the suffix $w$ from $w^{(p)}$.
Thus, we can see $\overline{w}$ as a compact representation of the $\rho$-orbit
of $w$.
The indices of $\overline{w}$ is always considered to be in
$\{0,1,\ldots,l-1\}$ by taking modulo $l$.
Let the sequence ${\mathbf c}_w=(c_0, c_1, \ldots, c_{p})$
be defined by 
\begin{equation}\label{eq:sequence}
 c_k = |w^{(k)}| - n, 
\end{equation}
where $|w^{(k)}|$ is the length of the word $w^{(k)}$.
In other words, ${\mathbf c}_w=(c_0, c_1, \ldots, c_{p})$
is the rising subsequence of $(0,1,\ldots,l)$ which satisfies
\begin{equation}
 \label{eq:orbitcontained}
  \rho^k(w) = w^{(p)}_{[c_k, c_k+n-1]},
\end{equation}
and therefore we have the following lemma:
\begin{lem}
 \label{lem:lemfirst}
\begin{equation}
 \label{eq:orbprefix}
 w^{(p)}_{[c_k, c_{k+1}-1]}=
  \begin{cases}
   \stackrel{m}{\overbrace{11\cdots 1}} & c_{k+1}-c_k = m \mbox{ and } \rho^k(w)_{m-1}=1,\\
   \stackrel{c_{k+1}-c_k-1}{\overbrace{11\cdots 1}}0 & \mbox{ otherwise}.
  \end{cases}
\end{equation}
\end{lem}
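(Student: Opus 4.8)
The plan is to unwind the two defining recursions and read everything off from $\rho^k(w)$. First, observe that $c_{k+1}-c_k=|w^{(k+1)}|-|w^{(k)}|$ is exactly the number of bits appended to $w^{(k)}$ to obtain $w^{(k+1)}$. By the definition of the extension sequence this number equals $k'+1$ when there is a $k'<m$ with $\rho^k(w)_{[0,k']}=\stackrel{k'}{\overbrace{1\cdots1}}0$ — such a $k'$, being the position of the first $0$ among $\rho^k(w)_0,\dots,\rho^k(w)_{m-1}$, is unique — and equals $m$ otherwise. On the other hand, $(\ref{eq:orbitcontained})$ gives $\rho^k(w)=w^{(p)}_{[c_k,c_k+n-1]}$, and since $c_{k+1}-c_k\le m\le n$ this yields
\[
 w^{(p)}_{[c_k,c_{k+1}-1]}=\rho^k(w)_{[0,c_{k+1}-c_k-1]},
\]
i.e.\ the quantity in $(\ref{eq:orbprefix})$ is just the prefix of $\rho^k(w)$ of length $c_{k+1}-c_k$.

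Next I would case-split on the first $m$ bits of $\rho^k(w)$. If some $\rho^k(w)_i$ with $i<m$ equals $0$, take $k'$ minimal with this property; then $k'<m$, $\rho^k(w)_{[0,k']}=\stackrel{k'}{\overbrace{1\cdots1}}0$, we are in the first branch of the extension recursion, and $c_{k+1}-c_k=k'+1$. Hence the prefix of $\rho^k(w)$ of length $c_{k+1}-c_k$ is $\stackrel{k'}{\overbrace{1\cdots1}}0=\stackrel{c_{k+1}-c_k-1}{\overbrace{1\cdots1}}0$; moreover, should $c_{k+1}-c_k$ equal $m$, then $k'=m-1$ and $\rho^k(w)_{m-1}=0$, so the hypothesis of the first case of $(\ref{eq:orbprefix})$ fails and its ``otherwise'' clause applies, giving precisely $\stackrel{c_{k+1}-c_k-1}{\overbrace{1\cdots1}}0$. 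If instead $\rho^k(w)_0=\dots=\rho^k(w)_{m-1}=1$, then no $k'<m$ meets the condition, we are in the second branch of the extension recursion, $c_{k+1}-c_k=m$, and the prefix of $\rho^k(w)$ of length $m$ is $\stackrel{m}{\overbrace{1\cdots1}}$; since $\rho^k(w)_{m-1}=1$, the first case of $(\ref{eq:orbprefix})$ applies and gives $\stackrel{m}{\overbrace{1\cdots1}}$. In each case the claimed value is obtained.

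Finally I would note that the dichotomy in $(\ref{eq:orbprefix})$ is exhaustive and lines up with the split above: ``$c_{k+1}-c_k=m$ and $\rho^k(w)_{m-1}=1$'' occurs exactly in the second case, and everything else falls under the first case. This is essentially bookkeeping rather than a substantive argument; the one point to watch — and what I regard as the only real subtlety — is that $c_{k+1}-c_k=m$ can arise in both cases: with $\rho^k(w)_{m-1}=0$ when the first $0$ sits at position $m-1$, and with $\rho^k(w)_{m-1}=1$ when there is no $0$ at all among the first $m$ bits. This is exactly why the extra condition on $\rho^k(w)_{m-1}$ is needed to separate the two cases in the statement.
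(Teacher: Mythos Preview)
Your proof is correct and follows the same approach the paper implicitly takes: the paper states this lemma immediately after the definitions and equation $(\ref{eq:orbitcontained})$ with no separate proof, treating it as a direct consequence, and your argument is precisely the careful unwinding of those definitions that justifies this. Your observation that $c_{k+1}-c_k=m$ can occur in both branches (distinguished by $\rho^k(w)_{m-1}$) is the only point requiring any care, and you handle it correctly.
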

We define another word $\widehat{w}=\widehat{w}_0\widehat{w}_1\cdots \widehat{w}_{l-1}$ 
by removing the prefix $w$ of starting $n$ bits from $w^{(p)}$.
Then, we have
\[
 \widehat{w}_{[c_{k}-n,c_k-1]} = w^{(p)}_{[c_{k},c_k+n-1]} = \rho^k_m(w).
\]
\begin{lem}
\label{lem:relbarhat}
\[
\widehat{w}_{[c_k, c_{k+1}-1]}
 =
 {\rm Rev}\left(
 \overline{w}_{[c_k, c_{k+1}-1]}
 \right)
 =
 \begin{cases}
  \stackrel{m}{\overbrace{11\cdots 1}} & \mbox{ if } c_{k+1}-c_k = m \mbox{ and } \rho^k(w)_{m-1}=1,\\
  \stackrel{c_{k+1}-c_k-1}{0\overbrace{11\cdots 1}} & \mbox{ otherwise}.
 \end{cases}
\]
\end{lem}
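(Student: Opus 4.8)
The plan is to read off both sides of the asserted identity as explicit substrings of the single word $w^{(p)}$ and to identify them using Lemma~\ref{lem:lemfirst} together with the recursive definition of the extension sequence $w^{(0)}, w^{(1)}, \dots$. The statement is in essence a bookkeeping identity, so once the index conventions are pinned down there is little left to do.

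First I would determine ${\rm Rev}\!\left(\overline{w}_{[c_k,c_{k+1}-1]}\right)$. Since $\overline{w}$ is the length-$l$ prefix of $w^{(p)}$ and $c_{k+1}-1<c_p=l$ (the sequence $\mathbf{c}_w$ being strictly increasing), we have $\overline{w}_{[c_k,c_{k+1}-1]}=w^{(p)}_{[c_k,c_{k+1}-1]}$, which Lemma~\ref{lem:lemfirst} identifies as $1^{m}$ when $c_{k+1}-c_k=m$ and $\rho^k(w)_{m-1}=1$, and as $1^{c_{k+1}-c_k-1}0$ otherwise. Applying ${\rm Rev}$---which fixes $1^{m}$ and carries $1^{c_{k+1}-c_k-1}0$ to $0\,1^{c_{k+1}-c_k-1}$---gives exactly the two-case expression on the right-hand side.

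Next I would compute $\widehat{w}_{[c_k,c_{k+1}-1]}$ directly. Since $\widehat{w}_i=w^{(p)}_{n+i}$ while $w^{(k)}$ and $w^{(k+1)}$ are prefixes of $w^{(p)}$ of lengths $n+c_k$ and $n+c_{k+1}$, the block $\widehat{w}_{[c_k,c_{k+1}-1]}=w^{(p)}_{[n+c_k,n+c_{k+1}-1]}$ is precisely the word appended to $w^{(k)}$ to produce $w^{(k+1)}$. By the defining recursion this appended word is $0\,1^{k'}$ whenever some $k'<m$ (necessarily the position of the first $0$ of $\rho^k(w)$) satisfies $\rho^k(w)_{[0,k']}=1^{k'}0$, in which case $c_{k+1}-c_k=k'+1$, and it is $1^{m}$ otherwise, in which case $c_{k+1}-c_k=m$. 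A short case analysis reconciles this with the two branches of the claim: in the first case $\rho^k(w)_{k'}=0$ puts us in the ``otherwise'' branch (either $c_{k+1}-c_k<m$, or else $c_{k+1}-c_k=m$ with $\rho^k(w)_{m-1}=\rho^k(w)_{k'}=0$) and $0\,1^{k'}=0\,1^{c_{k+1}-c_k-1}$; in the second case the absence of such a $k'$ forces $\rho^k(w)_0=\dots=\rho^k(w)_{m-1}=1$ (a $0$ among the first $m$ letters of $\rho^k(w)$ would supply such a $k'$), so $c_{k+1}-c_k=m$, $\rho^k(w)_{m-1}=1$, and the appended word $1^{m}$ sits in the first branch.

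Comparing the two computations, $\widehat{w}_{[c_k,c_{k+1}-1]}$ and ${\rm Rev}\!\left(\overline{w}_{[c_k,c_{k+1}-1]}\right)$ are given by the same case expression, which is the assertion. I do not anticipate a genuine obstacle; the one point needing care is the index bookkeeping---that $\overline{w}$ reads off positions $[0,l-1]$ of $w^{(p)}$ whereas $\widehat{w}$ reads off positions $[n,n+l-1]$, so that $w^{(p)}_{[c_k,c_{k+1}-1]}$ is an initial segment of the length-$n$ window $\rho^k(w)$ while $w^{(p)}_{[n+c_k,n+c_{k+1}-1]}$ is the newly appended tail, and these two blocks are mirror images of one another exactly because the rotation rule appends $0\,1^{k'}$ precisely when the window begins with $1^{k'}0$ and appends $1^{m}$ precisely when it begins with $1^{m}$.
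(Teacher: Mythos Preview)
Your proposal is correct and follows essentially the same approach as the paper: identify $\widehat{w}_{[c_k,c_{k+1}-1]}=w^{(p)}_{[n+c_k,\,n+c_{k+1}-1]}$ as the tail appended in passing from $w^{(k)}$ to $w^{(k+1)}$, identify $\overline{w}_{[c_k,c_{k+1}-1]}=w^{(p)}_{[c_k,c_{k+1}-1]}$ via Lemma~\ref{lem:lemfirst}, and observe that the extension rule makes the appended tail the reverse of that prefix. The paper compresses your case analysis into the single displayed chain $\widehat{w}_{[c_k,c_{k+1}-1]}=w^{(p)}_{[c_k+n,c_{k+1}-1+n]}={\rm Rev}\bigl(w^{(p)}_{[c_k,c_{k+1}-1]}\bigr)={\rm Rev}\bigl(\overline{w}_{[c_k,c_{k+1}-1]}\bigr)$, but the content is the same.
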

\begin{proof}
By $(\ref{eq:orbprefix})$ and
\[
\widehat{w}_{[c_k, c_{k+1}-1]}
=
 w^{(p)}_{[c_k+n, c_{k+1}-1+n]}
 =
 {\rm Rev}\left(
 w^{(p)}_{[c_k, c_{k+1}-1]}
 \right)
 =
 {\rm Rev}\left(
 \overline{w}_{[c_k, c_{k+1}-1]}
 \right),
\]
where ${\rm Rev}(w)$ denotes the reverse of $w$.
\end{proof}

\bigskip
The main idea of the proof of Theorem $\ref{thm:main}$ can be informally stated as follows.
An element of the left hand side of $(\ref{eq:mainth})$ can be expressed in terms of
$\overline{w}$:
\[
\sum_{i=0}^{j-1}\rho^k(w)_i  =
\sum_{i=0}^{j-1}\overline{w}_{c_k+i},
\]
for $j=1,2,\ldots,n$.
A similar expression of elements of the  right hand side
of $(\ref{eq:mainth})$ can be obtained by using
$\rho^{-1}$ and $\widehat{w}$ instead of $\rho$ and $\overline{w}$.
These expressions are used to prove the
equality of these two multisets.
The equality can be easily shown for
$j < m$, and induction on $j$ is used
for $j\geq m$.

\begin{exm}
 \label{exm:barw}
 When $m=3$ and $w = 1011110$, we have\\
\[
 \begin{array}{ll}
  w^{(0)}  =  \underline{1011110} &                     c_0 =  0\\ 
  w^{(1)}  =  10\underline{1111001} &			c_1 =  2\\ 
  w^{(2)}  =  10111\underline{1001111} &		c_2 =  5\\ 
  w^{(3)}  =  1011110\underline{0111101} &		c_3 =  7\\ 
  w^{(4)}  =  10111100\underline{1111010} &		c_4 =  8\\ 
  w^{(5)}  =  10111100111\underline{1010111} &		c_5 =  11\\
  w^{(6)}  =  1011110011110\underline{1011101} &	c_6 =  13\\
  w^{(7)}  =  101111001111010\underline{1110101} &	c_7 =  15\\
  w^{(8)}  =  101111001111010111\underline{0101111} &	c_8 =  18\\
  w^{(9)}  =  1011110011110101110\underline{1011110} &	c_9 =  19\\
 \end{array}
\]
Underlined part of $w^{(k)}$ is equal to $\rho^k(w)$.
Therefore, removing the suffix $w$ of final $n$ bits from $w^{(9)}$, we have 
\[
 \overline{w} = 1011110011110101110,
\]
and the length $l$ of $\overline{w}$ is $19$.
By removing the starting $n$ bits from $w^{(9)}$, we have 
\[
 \widehat{w} = 0111101011101011110.
\]
 \[
 \begin{array}{c|c|c|c|c|c|c|c|c|c|c|c|c|c|c|c|c|c|c|c|}
  & c_0 &  & c_1 &  &  & c_2 &  & c_3 & c_4 &  &  & c_5 &  & c_6 &  & c_7 &  &  & c_8 \\
  i & 0 & 1 & 2 & 3 & 4 & 5 & 6 & 7 & 8 & 9 & 10 & 11 & 12 & 13 & 14 & 15 & 16 & 17 & 18 \\
  \hline
  \overline{w}_i & {\color{red}1} & 0 & {\color{red}1} & {\color{red}1} & {\color{blue}1} & 
{\color{red}1} & 0 & {\color{black}0} & {\color{red}1} & {\color{red}1} & {\color{blue}1} & {\color{red}1} & 0 & {\color{red}1} & 0 & {\color{red}1} & {\color{red}1} & {\color{blue}1} & {\color{black}0}\\
  \hline
  \widehat{w}_i & 0 & {\color{red}1} & {\color{blue}1} & {\color{red}1} & {\color{red}1} & 0 & {\color{red}1} & {\color{black}0} & {\color{blue}1} & {\color{red}1} & {\color{red}1} & 0 & {\color{red}1} & 0 & {\color{red}1} & {\color{blue}1} & {\color{red}1} & {\color{red}1} & {\color{black}0}\\
 \end{array}
 \]

\qed
\end{exm}

\bigskip
%We now describe how $H_w$ is determined.
In the following, 
$l$ denotes the length of the word $\overline{w}$, and
we regard the indices of the letters in the words $\overline{w}$ and $\widehat{w}$
are in $\{0,1,\ldots,l-1\}$  by taking modulo $l$.
We divide the set $I=\{0,1,\ldots,l-1\}$ of indices of $\overline{w}$
into two disjoint subsets, ${I}_0=\{i\,|\, \overline{w}_i=0\}$ and 
${I}_1=\{i\,|\, \overline{w}_i=1\}$. It is obvious that
\[
 I= {I}_0\cup {I}_1, \mbox{ and } {I}_0\cap {I}_1 = \emptyset.
\]
By $(\ref{eq:orbprefix})$, it is clear that ${I}_0 \subset \left\{c_0-1,c_1-1,\ldots, c_{p-1}-1\right\}$.
We define
\begin{eqnarray*}
 {I}_T & = & {I}_1\cap \left\{c_0-1,c_1-1,\ldots, c_{p-1}-1\right\}\\
 &  = & \left\{i\in I\,\middle|\, \overline{w}_{[i-m+1,~i]}=\stackrel{m}{\overbrace{11\dots 1}},\,
	i = c_j-1 \mbox{ for some }j\right\},
\end{eqnarray*}
and
$
  {I}_H={I}_1\backslash {I}_T,
$
where $i-m+1$ is considered to be in $I$ by taking modulo $l$.
Thus we have a decomposition,
$
 I = {I}_0 \cup {I}_H \cup {I}_T.
$
One of the most important properties of this decomposition is
\begin{equation}
 \label{eq:I0T}
 \{c_0, c_1, \ldots, c_{p-1}\}
 =
 \{k+1\,|\, k \in {I}_0\cup {I}_T\},
\end{equation}
from which we obtain another expression of $L^{(j)}$:
\[
L^{(j)} =
\left\{
\sum_{i=0}^{j-1}\overline{w}_{k+i+1} \,
\middle|\,
k\in {I}_0\cup {I}_T
\right\}.
\]

By using $\widehat{w}$ instead of $\overline{w}$,
we define another decomposition $I=\widehat{I}_0\cup \widehat{I}_T\cup \widehat{I}_H$
in the following way. Let $\widehat{I}_0 = \{i\,|\,\widehat{w}_i=0\}$ and
$\widehat{I}_1 = \{i\,|\,\widehat{w}_i=1\}$.
Then, by Lemma $\ref{lem:relbarhat}$, it is 
clear that $\widehat{I}_0\subset\{c_0, c_1, \ldots, c_{p-1}\}$.
We subdivide $\widehat{I}_1$ into two disjoint subsets:
\[
 \widehat{I}_T
 =
 \widehat{I}_1\cap \left\{c_0, c_1, \ldots, c_{p-1}\right\},
 ~~~
 \widehat{I}_H=\widehat{I}_1\backslash \widehat{I}_T.
\]
One of the most important properties of this decomposition is
\begin{equation}
 \label{eq:hatI0T}
 \{{c}_0, {c}_1, \ldots, {c}_{p-1}\}
 =
 \widehat{I}_0\cup \widehat{I}_T,
\end{equation}
and therefore we obtain another expression of $R^{(j)}$:
\[
R^{(j)} =
\left\{
\sum_{i=0}^{j-1}\widehat{w}_{k-i-1} \,
\middle|\,
k\in {\widehat{I}}_0\cup {\widehat{I}}_T
\right\}.
\]

\begin{exm}
\label{exm:handt}
Let $w=1011110$ and $m=3$. 
As we have shown in Example $\ref{exm:barw}$,
$(c_0,c_1,\ldots, c_9)=(0,2,5,7,8,11,13,15,18,19)$,
and we have
\[
 {I}_{0} = \{1,6,7,12,14,18\},~~ {I}_{H} = \{0,2,3,5,8,9,11,13,15,16\}, ~~
  {I}_{T} = \{4,10,17\},
\]
and
\[
 \widehat{I}_{0} = \left\{0,5,7,11,13,18\right\},~~
 \widehat{I}_{H} = \left\{1,3,4,6,9,10,12,14,16,17\right\},~~
 \widehat{I}_{T} =\left\{2,8,15\right\}.
\]
 \qed
\end{exm}

Let $j\leq n$ be a non-negative integer
and $a,b\in\{0,H,T\}$.
Then we define 
the multiset $M_{a,b}^{(j)}$ by
\[
 M^{(j)}_{a,b} =
 \left\{\sum_{i=0}^{j-1}\overline{w}_{k+i} \,\middle|\, k\in {I}_{a}, k+j-1\in {I}_{b}\right\}.
\]
The left hand side $L^{(j)}_m$ of $(\ref{eq:mainth})$ 
has the following decomposition:
\begin{equation}
 \label{eq:Ldeomposition}
 L^{(j)} = 
 \left(\bigcup_{b\in \{0,T,H\}}M_{0,b}^{(j+1)}\right)
 \cup
 \left(\bigcup_{b\in \{0,T,H\}}M_{T,b}^{(j+1)}-1\right),
\end{equation}
where $M-1$ denotes the multiset $\{m-1\,|\,m\in M\}$
for a multiset $M$ of integers.

Then we define 
the multiset $\widehat{M}_{a,b}^{(j)}$ by
\[
 \widehat{M}^{(j)}_{a,b} =
  \left\{
 \sum_{i=0}^{j-1}\widehat{w}_{k-i}
 \,\middle|\, 
 k\in \widehat{I}_{a}, k-j+1\in \widehat{I}_{b}
 \right\}.
\]
Then, the right hand side $R^{(j)}$ of $(\ref{eq:mainth})$ 
has the following decomposition:
\begin{equation}
 \label{eq:Rdecomposition}
 R^{(j)} = 
 \left(\bigcup_{b\in \{0,T,H\}}\widehat{M}_{0,b}^{(j+1)}\right)
 \cup
 \left(\bigcup_{b\in \{0,T,H\}}\widehat{M}_{T,b}^{(j+1)}-1\right).
\end{equation}
By $(\ref{eq:Ldeomposition})$ and $(\ref{eq:Rdecomposition})$,
to prove $L^{(j)}=R^{(j)}$,
it suffices to show
$M_{a,b}^{(j)} = \widehat{M}_{a,b}^{(j)}$
for all $a,b\in\{0,T,H\}$.

\begin{exm}
\label{exm:M}
Let $w=1011110$ and $m=3$, the same as the previous examples.
Table $\ref{tab:M}$ summarizes $M^{(3)}_{m,(a,b)}(w)$ for $w=1011110$ and $m=3$.
For instance, as we have seen in Example $\ref{exm:handt}$,
${I}_{0} = \{1,6,7,12,14,18\}$, and hence
$
 \left({I}_{0} + 2 \right)\cap {I}_{0} = \{14,1\}.
$
Therefore we have
$
 M^{(3)}_{0,0} =
 \left\{
 \overline{w}_{12} + \overline{w}_{13} + \overline{w}_{14} = 1,~~
 \overline{w}_{18} + \overline{w}_{0} + \overline{w}_{1} = 1\right\}.
$
Also
${I}_{T} = \{4,10,17\}, {I}_H=\{0,2,3,5,8,9,11,13,15,16\}$ and hence
$
 \left({I}_{T} + 2\right) \cap {I}_{H} = \{0\}.
$
Therefore, we have
$
 M^{(3)}_{{T},H}
 =
 \left\{
 \overline{w}_{17} + \overline{w}_{18} + \overline{w}_{0} = 2
 \right\}.
$
Table $\ref{tab:M}$ shows that
\begin{eqnarray*}
 L^{(2)} & = &
 \left(\bigcup_{b\in \{0,T,H\}}M_{0,b}^{(3)}\right)
 \cup
 \left(\bigcup_{b\in \{0,T,H\}}M_{T,b}^{(3)}-1\right)\\
 & = &
  \{1,1\} \cup \{1,2,2,2\} \cup \left(\{2,2\}-1\right)
  \cup
  \left(\{2\}-1\right)\\
 & = & \{1,1,1,1,1,1,2,2,2\}.
\end{eqnarray*}

\begin{table}[H]
\begin{center}
\begin{tabular}{|c|ccc|}
\hline
$a\backslash b $ & $0$ & $T$ & $H$ \\
\hline
 $0$ & $\{1,1\}$ & $\emptyset$ & $\{1,2,2,2\}$   \\ 
 $T$ & $\{2,2\}$ & $\emptyset$ & $\{2\}$         \\ 
 $H$ & $\{1,2\}$ & $\{3,3,3\}$ & $\{2,2,2,3,3\}$ \\ 
\hline
\end{tabular}
\caption{Table of ${M}^{(3)}_{a,b}$ for $w=1011110$ and $m=3$}
\label{tab:M}
\end{center}
\end{table}

%Similarly, we can compute the table of $\widehat{M}_{a,b}^{(j)}$:
%\begin{table}[H]
%\begin{center}
%\begin{tabular}{|c|ccc|}
%\hline
%$a\backslash b $ & $0$ & $T$ & $H$ \\
%\hline
% $0$ & $\{1,1\}$ & $\emptyset$ & $\{1,2,2,2\}$   \\ 
% $T$ & $\{2,2\}$ & $\emptyset$ & $\{2\}$         \\ 
% $H$ & $\{1,2\}$ & $\{3,3,3\}$ & $\{2,2,2,3,3\}$ \\ 
%\hline
%\end{tabular}
%\caption{Table of $\widehat{M}^{(3)}_{a,b}$ for $w=1011110$ and $m=3$}
%\label{tab:hatM}
%\end{center}
%\end{table}
\qed
\end{exm}

As can be seen in the examples above, the following two lemmas  relating $\overline{w}$ and $\widehat{w}$ hold.
\begin{lem}
\label{lem:slide}
Let $w = w_0w_1\cdots w_{n-1}\in\{0,1\}^n$ be a word of length $n$,
and $\overline{w}=\overline{w}_0\overline{w}_1\cdots\overline{w}_{l-1}$ 
and $\widehat{w}=\widehat{w}_0\widehat{w}_1\cdots\widehat{w}_{l-1}$be as defined above. Then
\begin{equation}
 \label{eq:barhatshift}
 \widehat{w}_i = \overline{w}_{i+n},
\end{equation}
where $i+n$ is considered to be in $\{0,1,\ldots,l-1\}$
by taking modulo $l$. In other words, 
$
 \rho^n(\overline{w})=\widehat{w}.
$
where $\rho$ is the ordinary rotation with $m=1$.
\end{lem}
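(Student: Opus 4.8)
The plan is to unwind the two definitions directly in terms of $w^{(p)}$ and then to use the single structural fact that $w^{(p)}$ both begins and ends with a copy of $w$. By construction $\overline{w}$ is the prefix of length $l$ of $w^{(p)}$ and $\widehat{w}$ is the suffix of length $l$ of $w^{(p)}$, so that $\overline{w}_i = w^{(p)}_i$ and $\widehat{w}_i = w^{(p)}_{n+i}$ for $i\in\{0,1,\ldots,l-1\}$. Proving $(\ref{eq:barhatshift})$ therefore reduces to showing that the restriction of $w^{(p)}$ to its index set $\{0,1,\ldots,n+l-1\}$ depends only on the index taken modulo $l$; substituting $j=n+i$ in that statement yields exactly $\widehat{w}_i = w^{(p)}_{n+i} = w^{(p)}_{(n+i)\bmod l} = \overline{w}_{i+n}$.

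First I would record the two facts $w^{(p)}_{[0,n-1]} = w$ and $w^{(p)}_{[l,\,n+l-1]} = w$. The former holds because $w^{(0)}=w$ has length $n$ and every $w^{(j+1)}$ is obtained from $w^{(j)}$ by appending letters on the right, so the first $n$ letters are never altered. The latter holds because $p$ is by definition the least index for which $w$ is a suffix of $w^{(p)}$, together with $|w^{(p)}|=n+l$. Combining the two gives the one-step periodicity relation $w^{(p)}_a = w^{(p)}_{a+l}$ for every $a\in\{0,1,\ldots,n-1\}$; informally, $w^{(p)}$ occurs as a subword of the purely periodic word $\overline{w}\,\overline{w}\,\overline{w}\cdots$.

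Second I would bootstrap this into the full claim $w^{(p)}_j = w^{(p)}_{j\bmod l}$ for all $j\in\{0,1,\ldots,n+l-1\}$. Writing $j=r+tl$ with $r\in\{0,1,\ldots,l-1\}$ and $t\geq 0$, one applies the one-step relation successively along the chain $r,\ r+l,\ \ldots,\ r+tl=j$; the passage from $r+sl$ to $r+(s+1)l$ is legitimate because $r+sl\le r+(t-1)l = j-l \le n-1$ for every $s\le t-1$. This proves $(\ref{eq:barhatshift})$. Finally, the ordinary rotation ($m=1$) acts on a length-$l$ word as the left cyclic shift $v\mapsto v_1v_2\cdots v_{l-1}v_0$, whose $n$-th iterate sends the letter in position $(i+n)\bmod l$ to position $i$; hence $(\ref{eq:barhatshift})$ is precisely the identity $\rho^n(\overline{w})=\widehat{w}$.

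The only point that needs genuine care is the modular bookkeeping in the second step when $l\leq n$: a single use of the prefix/suffix identity then no longer bridges positions $n+i$ and $(n+i)\bmod l$, so one must iterate and check the range bound $r+(t-1)l\le n-1$ that keeps every intermediate index of the chain inside $\{0,1,\ldots,n-1\}$ where the one-step relation is valid. Everything else is a direct transcription of the definitions of $\overline{w}$ and $\widehat{w}$ as the length-$l$ prefix and suffix of $w^{(p)}$.
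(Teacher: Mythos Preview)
Your argument is correct and is exactly the approach the paper has in mind when it says the lemma is ``clear from the definition of $\overline{w}$ and $\widehat{w}$'': you simply make explicit that $\overline{w}$ and $\widehat{w}$ are the length-$l$ prefix and suffix of $w^{(p)}$, use that $w^{(p)}$ begins and ends with $w$, and carry out the modular bookkeeping (including the iteration needed when $l\le n$) that the paper leaves to the reader.
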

\begin{proof}
This is clear from the definition of $\overline{w}$ and $\widehat{w}$.
\end{proof}

\begin{lem}
\label{lem:diff}
 The following maps are bijections.
\begin{eqnarray}
 {I}_0 \ni i  & \mapsto  & i - n  \in \widehat{I}_0.\label{eq:0}\\
 {I}_1 \ni i  & \mapsto  & i - n  \in \widehat{I}_1.\label{eq:1}\\
 I_{0}\cup I_{T} \ni i &  \mapsto &  i+1 \in \widehat{I}_{0}\cup\widehat{I}_{T}.\label{eq:0orT}\\
 {I}_{H} \ni i &  \mapsto &  i+1 \in \widehat{I}_{H}.\label{eq:H}\\
 {I}_{T} \ni i & \mapsto  &i - m +1 \in \widehat{I}_{T}\label{eq:T}.
\end{eqnarray}

\end{lem}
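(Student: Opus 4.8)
The plan is to dispose of the five maps in increasing order of difficulty. The bijections $(\ref{eq:0})$ and $(\ref{eq:1})$ are immediate from Lemma~$\ref{lem:slide}$: since $\widehat{w}_i=\overline{w}_{i+n}$ with indices taken modulo $l$, the shift $i\mapsto i-n$ is a permutation of $I=\{0,1,\dots,l-1\}$ that carries $I_0=\{i:\overline{w}_i=0\}$ onto $\widehat{I}_0=\{i:\widehat{w}_i=0\}$ and $I_1$ onto $\widehat{I}_1$.

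For $(\ref{eq:0orT})$ I would combine $(\ref{eq:I0T})$ and $(\ref{eq:hatI0T})$: the shift $i\mapsto i+1$ is injective on $I$, and its image on $I_0\cup I_T$ equals $\{k+1\mid k\in I_0\cup I_T\}=\{c_0,\dots,c_{p-1}\}=\widehat{I}_0\cup\widehat{I}_T$, so it restricts to a bijection $I_0\cup I_T\to\widehat{I}_0\cup\widehat{I}_T$. Statement $(\ref{eq:H})$ then follows by complementation: $i\mapsto i+1$ is a bijection of $I$ onto itself, $I=I_0\cup I_H\cup I_T$ and $I=\widehat{I}_0\cup\widehat{I}_H\cup\widehat{I}_T$ are both disjoint unions, and since the shift matches $I_0\cup I_T$ with $\widehat{I}_0\cup\widehat{I}_T$ it must match the complements $I_H$ and $\widehat{I}_H$.

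The substantial case is $(\ref{eq:T})$, and the argument I have in mind runs as follows. The map $i\mapsto i-m+1$ is a shift of $\mathbb{Z}/l\mathbb{Z}$ and hence injective, so it suffices to show it sends $I_T$ \emph{into} $\widehat{I}_T$ and \emph{onto} $\widehat{I}_T$. Let $i\in I_T$; by definition $\overline{w}_{[i-m+1,\,i]}$ is the all-$1$ word of length $m$ and $i=c_j-1$ for some $j$ (indices of the $c_k$ read cyclically modulo $p$, with $c_p=l\equiv c_0$, so that for $j=0$ the relevant block is $\overline{w}_{[c_{p-1},\,l-1]}$). Since the last letter $\overline{w}_{c_j-1}=\overline{w}_i$ of that block equals $1$, the ``otherwise'' branch of Lemma~$\ref{lem:lemfirst}$ applied to $\overline{w}_{[c_{j-1},\,c_j-1]}$ is impossible (it would place a $0$ at position $c_j-1$), so $c_j-c_{j-1}=m$ and $\overline{w}_{[c_{j-1},\,c_j-1]}$ is all $1$'s; in particular $i-m+1=c_j-m=c_{j-1}\in\{c_0,\dots,c_{p-1}\}$. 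By Lemma~$\ref{lem:relbarhat}$, $\widehat{w}_{[c_{j-1},\,c_j-1]}={\rm Rev}(\overline{w}_{[c_{j-1},\,c_j-1]})$ is again all $1$'s, so $\widehat{w}_{c_{j-1}}=1$, whence $i-m+1=c_{j-1}\in\widehat{I}_1\cap\{c_0,\dots,c_{p-1}\}=\widehat{I}_T$. For surjectivity, take $c_k\in\widehat{I}_T$, so $\widehat{w}_{c_k}=1$; then the ``otherwise'' branch of Lemma~$\ref{lem:relbarhat}$ applied to $\widehat{w}_{[c_k,\,c_{k+1}-1]}$ is impossible (it would begin with $0$), so $c_{k+1}-c_k=m$ and $\widehat{w}_{[c_k,\,c_{k+1}-1]}$ is all $1$'s, hence so is $\overline{w}_{[c_k,\,c_{k+1}-1]}={\rm Rev}(\widehat{w}_{[c_k,\,c_{k+1}-1]})$. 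Then $i:=c_{k+1}-1$ satisfies $\overline{w}_{[i-m+1,\,i]}=\overline{w}_{[c_k,\,c_{k+1}-1]}$ (all $1$'s) and $i=c_{k+1}-1$, so $i\in I_T$, and $i-m+1=c_{k+1}-m=c_k$.

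I expect the only real friction to be notational bookkeeping: keeping the indices of $\overline{w},\widehat{w}$ modulo $l$ in sync with the indices of $(c_k)$ modulo $p$, handling the wrap-around case $j=0$ cleanly, and, in each appeal to Lemma~$\ref{lem:lemfirst}$ or Lemma~$\ref{lem:relbarhat}$, verifying that the short (``otherwise'') block genuinely cannot carry a $1$ at the end (respectively, at the start) that the argument requires. Once those points are pinned down, everything reduces to the set identities $(\ref{eq:I0T})$, $(\ref{eq:hatI0T})$ and Lemmas~$\ref{lem:slide}$, $\ref{lem:lemfirst}$, $\ref{lem:relbarhat}$.
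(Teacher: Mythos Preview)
Your proposal is correct and follows essentially the same route as the paper: $(\ref{eq:0})$, $(\ref{eq:1})$ from the shift relation between $\overline{w}$ and $\widehat{w}$; $(\ref{eq:0orT})$ from the set identities $(\ref{eq:I0T})$ and $(\ref{eq:hatI0T})$; $(\ref{eq:H})$ by complementation; and $(\ref{eq:T})$ by showing that $i=c_{k+1}-1\in I_T$ forces $c_{k+1}-c_k=m$ and $c_k\in\widehat{I}_T$ via Lemmas~$\ref{lem:lemfirst}$ and~$\ref{lem:relbarhat}$. The paper's own argument for $(\ref{eq:T})$ is compressed into a single appeal to Lemma~$\ref{lem:relbarhat}$ (using that $\overline{w}_{c_{k+1}-1}$ equals the first letter of $\widehat{w}_{[c_k,c_{k+1}-1]}$), whereas you unpack the two directions more carefully; the content is the same.
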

\begin{proof}
Since $\overline{w}$ is obtained from $w^{(p)}$ by
removing its suffix $w$, and
$\widehat{w}$ is obtained from the same sequence $w^{(p)}$
by removing its prefix $w$,
we see that $(\ref{eq:0})$ and $(\ref{eq:1})$ are bijections.
From $(\ref{eq:I0T})$ and $(\ref{eq:hatI0T})$,
it clearly follows that $(\ref{eq:0orT})$ is a bijection. This also shows that
$(\ref{eq:H})$ is a bijection 
since $I_H = I\backslash\left(I_0\cup I_T\right)$ and 
$\widehat{I}_H = I\backslash\left(\widehat{I}_0\cup \widehat{I}_T\right)$.

If $i\in I_T$ then we have $i=c_{k+1}-1$ for some $k$ which implies
$c_k = c_{k+1}-m \in \widehat{I}_T$ by Lemma $\ref{lem:relbarhat}$.
Conversely if $i \in \widehat{I}_T$ we have $i+m-1\in I_T$.
Therefore $(\ref{eq:T})$ is a bijection.
\end{proof}

Now we start to prove 
\begin{equation}
 \label{eq:elements}
 M_{a,b}^{(j)}=\widehat{M}_{a,b}^{(j)} 
\end{equation}
for $a,b\in\{0,T,H\}$ and $j=1,2,\ldots,n$.
We prove this by induction on $j$.
%The proof is reminiscent of solving sudoku puzzles.
When $j=1$, $(\ref{eq:elements})$ is clear
since 
\begin{equation}
 \label{eq:j1}
M_{a,b}^{(1)}=\widehat{M}_{a,b}^{(1)}=
\begin{cases}
 \emptyset & a \neq b,\\
 \{\stackrel{|{I}_0|}{\overbrace{0,0,\ldots,0}}\} & a = b = 0,\\
 \{\stackrel{|{I}_a|}{\overbrace{1,1,\ldots,1}}\} & a = b \in \{H,T\}.\\
\end{cases}
\end{equation}
We prove the cases where $j=2,\ldots,m$ first,
and then prove for $j>m$ by using induction.
First we prove  Lemma $\ref{lem:00TT}$ and $\ref{lem:0orTto0orT}$
which hold for $j=1,2,\ldots,n$.
\begin{lem}
 \label{lem:00TT}
 For $j=1,2,\ldots,n$, 
 \[
 M^{(j)}_{0,0} = \widehat{M}^{(j)}_{0,0},
 ~~
 M^{(j)}_{T,T} = \widehat{M}^{(j)}_{T,T}.
 \]
\end{lem}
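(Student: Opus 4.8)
The plan is to prove the two identities $M^{(j)}_{0,0}=\widehat M^{(j)}_{0,0}$ and $M^{(j)}_{T,T}=\widehat M^{(j)}_{T,T}$ separately, in each case by exhibiting a value-preserving bijection between the index pairs that parametrize the two multisets. Recall that $M^{(j)}_{a,b}$ collects the sums $\sum_{i=0}^{j-1}\overline{w}_{k+i}$ over pairs $(k,k+j-1)$ with $k\in I_a$ and $k+j-1\in I_b$, whereas $\widehat M^{(j)}_{a,b}$ collects the sums $\sum_{i=0}^{j-1}\widehat{w}_{k-i}$ over pairs $(k,k-j+1)$ with $k\in\widehat I_a$ and $k-j+1\in\widehat I_b$; so for $a=b\in\{0,T\}$ it suffices to pair these up so that the attached sums agree. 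The case $j=1$ is already recorded in $(\ref{eq:j1})$, so I assume $j\geq 2$ below, all indices of $\overline{w}$ and $\widehat{w}$ being read modulo $l$.

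For the case $a=b=0$ I would send the pair $(k,k+j-1)$ to the pair $(k',k'-j+1)$ with $k':=k+j-1-n$. By $(\ref{eq:0})$ of Lemma $\ref{lem:diff}$, $k+j-1\in I_0$ is equivalent to $k'\in\widehat I_0$, and $k\in I_0$ is equivalent to $k-n=k'-j+1\in\widehat I_0$, so this is a bijection between the two families of index pairs. For the attached values, reindexing by $i\mapsto j-1-i$ and then applying Lemma $\ref{lem:slide}$ (which gives $\widehat{w}_t=\overline{w}_{t+n}$) yields
\[
\sum_{i=0}^{j-1}\widehat{w}_{k'-i}=\sum_{i=0}^{j-1}\widehat{w}_{k-n+i}=\sum_{i=0}^{j-1}\overline{w}_{k+i},
\]
which is the value of the original pair, so $M^{(j)}_{0,0}=\widehat M^{(j)}_{0,0}$.

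For the case $a=b=T$ I would first read off from $(\ref{eq:orbprefix})$ and Lemma $\ref{lem:relbarhat}$ the explicit descriptions $I_T=\{\,c_{g+1}-1\mid g\in G\,\}$ and $\widehat I_T=\{\,c_g\mid g\in G\,\}$, where $G=\{\,g\mid c_{g+1}-c_g=m,\ \rho^g(w)_{m-1}=1\,\}$, and where for $g\in G$ the block $B_g:=[c_g,c_{g+1}-1]$ has length $m$ and satisfies $\overline{w}_t=\widehat{w}_t=1$ for all $t\in B_g$. I would then send $(k,k+j-1)$ to $(k',k'-j+1)$ with $k':=k+j-m$; by $(\ref{eq:T})$ of Lemma $\ref{lem:diff}$ this is a bijection between the relevant index pairs. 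Writing $k=c_{g+1}-1$ and $k+j-1=c_{h+1}-1$ with $g,h\in G$ (so that $k'=c_h$, $k'-j+1=c_g$, and $c_h=c_g+j-1$), the value of the $M$-pair equals the sum of $\overline{w}$ over the interval $[c_{g+1}-1,\,c_{h+1}-1]$ and the value of the $\widehat M$-pair equals the sum of $\widehat{w}$ over $[c_g,\,c_h]$. By Lemma $\ref{lem:relbarhat}$, $\widehat{w}$ restricted to each block $B_k$ is the reversal of $\overline{w}$ restricted to $B_k$, so both have the same block-sum $S_k$. Decomposing $[c_{g+1}-1,\,c_{h+1}-1]$ as the single letter at $c_{g+1}-1$ followed by the blocks $B_{g+1},\dots,B_h$, and $[c_g,\,c_h]$ as the blocks $B_g,\dots,B_{h-1}$ followed by the single letter at $c_h$, the two values become $\overline{w}_{c_{g+1}-1}+\sum_{k=g+1}^{h}S_k$ and $\sum_{k=g}^{h-1}S_k+\widehat{w}_{c_h}$; their difference telescopes to $S_h-S_g+\overline{w}_{c_{g+1}-1}-\widehat{w}_{c_h}=m-m+1-1=0$, since $g,h\in G$. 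Hence $M^{(j)}_{T,T}=\widehat M^{(j)}_{T,T}$.

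The reindexings and the verification that the displayed maps are genuine bijections are routine consequences of Lemmas $\ref{lem:slide}$ and $\ref{lem:diff}$; the step I expect to be the real obstacle is the telescoping identity in the $T,T$ case, which rests on the explicit block picture of $I_T$ and $\widehat I_T$ and on the block-wise reversal symmetry of Lemma $\ref{lem:relbarhat}$, and which in addition requires some care with the cyclic (modulo $l$) labelling of the blocks when $c_g+j-1\geq l$ and the range $B_{g+1},\dots,B_h$ wraps around the whole word.
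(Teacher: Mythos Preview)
Your proof is correct and follows essentially the same approach as the paper's. For $M^{(j)}_{0,0}=\widehat M^{(j)}_{0,0}$ the paper simply invokes the rotation relation between $\overline{w}$ and $\widehat{w}$ (exactly your Lemma~\ref{lem:slide} argument), and for $M^{(j)}_{T,T}=\widehat M^{(j)}_{T,T}$ it uses the same bijection $I_T\ni i\mapsto i-m+1\in\widehat I_T$ from Lemma~\ref{lem:diff} together with the block-sum equality $\sum_{t=c_k}^{c_{k+1}-1}\overline{w}_t=\sum_{t=c_k}^{c_{k+1}-1}\widehat{w}_t$ from Lemma~\ref{lem:relbarhat}, which is precisely the content of your telescoping computation.
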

\begin{proof}
The first equation is clear from the fact that $\widehat{w}$
is obtained from $\overline{w}$ by applying (ordinary) rotations.
Suppose that $i\in I_T$ and $i+j-1\in I_T$.
Then, by Lemma $\ref{lem:diff}$, $i-m+1\in\widehat{I}_T$ and $i+j-m\in\widehat{I}_T$.
There exist some $r$ and $s$ such that $c_r=i+1$ and $c_s=i+j$,
and therefore $c_{r-1} = i-m+1 \in \widehat{I}_T$ and $c_{s-1}=i+j-m\in\widehat{I}_T$.
By Lemma $\ref{lem:relbarhat}$,  we have $\overline{w}_{c_k} + \cdots + \overline{w}_{c_{k+1}-1}
 = \widehat{w}_{c_k} + \cdots + \widehat{w}_{c_{k+1}-1}$
for every $k$. Therefore,
\begin{eqnarray*}
 \overline{w}_i + \overline{w}_{i+1} + \cdots + \overline{w}_{i+j-1}
  & = & \overline{w}_{c_r} + \overline{w}_{c_r + 1} + \cdots + \overline{w}_{c_{s-1}-1} + m +1\\ 
 & = &  \widehat{w}_{c_r} + \widehat{w}_{c_r + 1} + \cdots + \widehat{w}_{c_{s-1}-1} + m + 1\\
 & = &  \widehat{w}_{i-m+1} + \widehat{w}_{i-m+2} + \cdots + \widehat{w}_{i+j-m}.
\end{eqnarray*}
See Figure $\ref{fig:MTT}$.

\begin{figure}[H]
 \begin{center}
  \begin{tikzpicture}
   \draw (-1.2,0) node {$\overline{w}$};
   \draw (-1.2,-1) node {$\widehat{w}$};
   \draw (-1, 0) -- (7,0);
   \draw (0, 0.1) -- (0,-0.1);
   \draw (1.5, 0.1) -- (1.5,-0.1);
   \draw (6, 0.1) -- (6,-0.1);
   \draw (4.5, 0.1) -- (4.5,-0.1);
   \draw (1.3,0.25) node {$T$};
   \draw (5.8,0.25) node {$T$};

   \draw (2.5, 0.1) -- (2.5,-0.1);
   \draw (3.3, 0.1) -- (3.3,-0.1);

   \draw[line width=1, >=latex, ->] (2.5,0) -- (3.3,0);
   \draw[line width=1, >=latex, ->] (3.3,0) -- (4.5,0);
   \draw[line width=1, >=latex, ->,blue] (4.5,0) -- (6,0);
   \draw[line width=1, >=latex, ->] (1.5,0) -- (2.5,0);
   \draw[line width=1, >=latex, ->] (3.3,-1) -- (2.5,-1);
   \draw[line width=1, >=latex, <-] (1.5,-1) -- (2.5,-1);
   \draw[line width=1, >=latex, <-] (3.3,-1) -- (4.5,-1);
   \draw[line width=1, >=latex, <-,blue] (0,-1) -- (1.5,-1);
   \draw[dashed] (2.5, 0.) -- (2.5,-1);
   \draw[dashed] (3.3, 0.) -- (3.3,-1);

   \draw[dashed] (0,0) -- (0,-1);
   \draw[dashed] (1.5,0) -- (1.5,-1);
   \draw[dashed] (4.5,0) -- (4.5,-1);
   \draw[dashed] (6,0) -- (6,-1);
   \draw (0.2,0.51) node {$c_{r-1}$};
   \draw (1.7,0.51) node {$c_{r}$};
   \draw (4.7,0.51) node {$c_{s-1}$};
   \draw (6.2,0.51) node {$c_{s}$};
   \begin{scope}[yshift=-1cm]
    \draw (0.2,0.25) node {$T$};
    \draw (4.7,0.25) node {$T$};
    \draw (-1, 0) -- (7,0);
    \draw (0, 0.1) -- (0,-0.1);
   \draw (1.5, 0.1) -- (1.5,-0.1);
   \draw (4.5, 0.1) -- (4.5,-0.1);
   \draw (6, 0.1) -- (6,-0.1);
   \draw (2.5, 0.1) -- (2.5,-0.1);
   \draw (3.3, 0.1) -- (3.3,-0.1);
   \end{scope}
  \end{tikzpicture}
  \caption{Explanation of $M^{(j)}_{TT} = \widehat{M}^{(j)}_{TT}$.}
  \label{fig:MTT}
 \end{center}
\end{figure}

\end{proof}

%\begin{lem}
% \[
%  \bigcup_{a,b\in \left\{H,T\right\}}M^{(j)}_{a,b}
% =
%  \bigcup_{a,b\in \left\{H,T\right\}}\widehat{M}^{(j)}_{a,b}
% \]
%\end{lem}
%\begin{proof}
%Since ${I}_H\cup {I}_T = {I}_1$ and $\widehat{I}_H \cup \widehat{I}_T = \widehat{I}_1$,
%the lemma follows from the relation $(\ref{eq:1})$ between ${I}_1$ and $\widehat{I}_1$
%in Lemma $\ref{lem:diff}$.
%\end{proof}

\begin{lem}
 \label{lem:0orTto0orT}
 For $j=1,2,\ldots,n$, 
 \begin{equation}
  \label{eq:0T}
 M^{(j)}_{0,0}\cup
 M^{(j)}_{0,T}\cup
 \left(M^{(j)}_{T,0}-1\right)\cup
 \left(M^{(j)}_{T,T}-1\right)
 =
 \widehat{M}^{(j)}_{0,0}\cup
 \widehat{M}^{(j)}_{0,T}\cup
 \left(\widehat{M}^{(j)}_{T,0}-1\right)\cup
 \left(\widehat{M}^{(j)}_{T,T}-1\right)
 \end{equation}
\end{lem}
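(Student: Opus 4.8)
The plan is to show that both unions in $(\ref{eq:0T})$ equal, term for term, the same multiset of ``block sums'', so that the required bijection is literally the identity on an index set of pairs. First I would reparametrize. A window $\overline w_k\overline w_{k+1}\cdots\overline w_{k+j-1}$ contributes to the left side of $(\ref{eq:0T})$ exactly when $k\in I_0\cup I_T$ and $k+j-1\in I_0\cup I_T$; by $(\ref{eq:I0T})$ this says $k=c_r-1$ and $k+j=c_s$ for some $r,s$, i.e. the window is $\overline w_{[c_r-1,\,c_s-1]}$ with $c_s-c_r=j-1$. Dually, a window $\widehat w_{k'}\widehat w_{k'-1}\cdots\widehat w_{k'-j+1}$ contributes to the right side exactly when $k'\in\widehat I_0\cup\widehat I_T$ and $k'-j+1\in\widehat I_0\cup\widehat I_T$; by $(\ref{eq:hatI0T})$ this says $k'=c_s$ and $k'-j+1=c_r$, i.e. the window is $\widehat w_{[c_r,\,c_s]}$ with again $c_s-c_r=j-1$. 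Hence both sides are indexed by the same set of pairs $(r,s)$ with $c_s=c_r+j-1$ (block indices read modulo $p$, positions modulo $l$): the pair $(r,s)$ contributes the digit sum of $\overline w_{[c_r-1,c_s-1]}$ on the left and of $\widehat w_{[c_r,c_s]}$ on the right, each with an extra $-1$ in the cases labelled by ``$T$''.

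For the matching I would compute the two contributions explicitly. Let $\beta_k=\overline w_{c_k}+\overline w_{c_k+1}+\cdots+\overline w_{c_{k+1}-1}$ be the number of $1$s in the $k$-th block. By Lemma $\ref{lem:relbarhat}$ the block $\widehat w_{[c_k,c_{k+1}-1]}$ is the reversal of $\overline w_{[c_k,c_{k+1}-1]}$, so it also contains exactly $\beta_k$ ones. On the left, the pair $(r,s)$ contributes $\sum_{t=c_r-1}^{c_s-1}\overline w_t$, and the extra $-1$ is attached precisely in the cases $a=T$, i.e. precisely when $c_r-1\in I_T$; since $c_r-1$ is automatically one of $c_0-1,\ldots,c_{p-1}-1$, this happens exactly when $\overline w_{c_r-1}=1$. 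Hence the contribution equals $\sum_{t=c_r}^{c_s-1}\overline w_t=\sum_{k=r}^{s-1}\beta_k$ in every case, because the boundary bit $\overline w_{c_r-1}$ is cancelled by the $-1$ when it is $1$ and contributes $0$ when it is $0$ (and the type $b$ of $\overline w_{c_s-1}$ only decides which of the four multisets the element falls into, not its value). Symmetrically, on the right the pair $(r,s)$ contributes $\sum_{t=c_r}^{c_s}\widehat w_t$, with the extra $-1$ precisely when $c_s\in\widehat I_T$, i.e. when $\widehat w_{c_s}=1$; so this contribution is also $\sum_{t=c_r}^{c_s-1}\widehat w_t=\sum_{k=r}^{s-1}\beta_k$. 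The two values coincide, so the identity map on the set of pairs $(r,s)$ with $c_s-c_r=j-1$ realizes the equality of multisets $(\ref{eq:0T})$.

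I expect no real difficulty beyond bookkeeping. The one point that genuinely needs care is the equivalence ``the $-1$ is applied'' $\Longleftrightarrow$ ``the relevant boundary bit equals $1$'' on both the $\overline w$ and the $\widehat w$ side, since this is exactly what makes the $-1$ shifts cancel those boundary bits; it follows from the definitions of $I_T$ and $\widehat I_T$ together with $(\ref{eq:I0T})$ and $(\ref{eq:hatI0T})$. One must also keep the cyclic indexing modulo $l$ and modulo $p$ straight, and note that the degenerate case $j=1$ forces $r=s$, each contribution then being the empty sum $0$, in agreement with $(\ref{eq:j1})$; the case of a very short $\overline w$ is handled the same way once all indices are read cyclically. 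In fact the cleanest presentation is simply to observe that each side of $(\ref{eq:0T})$ equals the multiset $\big\{\sum_{k=r}^{s-1}\beta_k \,\big|\, 0\le r\le p-1,\ c_s=c_r+j-1\big\}$, which makes the asserted equality immediate; I would likely write it in that form.
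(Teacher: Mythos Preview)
Your proposal is correct and follows essentially the same approach as the paper. Both arguments reparametrize the contributing indices via $(\ref{eq:I0T})$ and $(\ref{eq:hatI0T})$ so that each side is indexed by pairs $(r,s)$ with $c_s-c_r=j-1$, observe that the $-1$ shifts exactly cancel the boundary bits $\overline w_{c_r-1}$ and $\widehat w_{c_s}$, and then use Lemma~\ref{lem:relbarhat} to identify the remaining sums over full blocks $[c_r,c_s-1]$; your explicit introduction of the block sums $\beta_k$ is a mild notational addition to the paper's equation $(\ref{eq:0Telement})$ rather than a different idea.
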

\begin{proof}
%Recall that  $c_s, c_s+1, \ldots, c_{s+1}-2\in {I}_H$ and $c_{s+1}-1\in {I}_0\cup {I}_T$
%for every $s\in\{0,1,\ldots,p-1\}$.
%Since $c_{s+1}-1\in {I}_0$, if and only if $\overline{w}_{c_{s+1}-1}=0$ if and only if
%$\widehat{w}_{c_s}=0$, we have
By Lemma $\ref{lem:relbarhat}$, we have
\[
 \overline{w}_{c_s}
 +
 \overline{w}_{c_s+1}
 +
 \cdots
 +
 \overline{w}_{c_{s+1}-1}
 =
 \widehat{w}_{c_s}
 +
 \widehat{w}_{c_s+1}
 +
 \cdots
 +
 \widehat{w}_{c_{s+1}-1}
\]
for every $s\in\{0,1,\ldots,p-1\}$.
%Since ${I}_0\cup {I}_T = I\backslash {I}_H$ and $\widehat{I}_0\cup \widehat{I}_T = I\backslash \widehat{I}_H$,
%the bijection $(\ref{eq:H})$ induces another bijection
%\[
% {I}_0\cup {I}_T \ni i \mapsto i+1 \in \widehat{I}_0\cup \widehat{I}_T.
%\]
%Therefore, 
Since $(\ref{eq:0orT})$ in Lemma $\ref{lem:diff}$ is a bijection,
$i, i+j-1 \in {I}_0\cup {I}_T$ if and only if  $i+1, i+j \in \widehat{I}_0\cup \widehat{I}_T$.
Hence, if $i, i+j-1 \in {I}_0\cup {I}_T$, then there are some $s,t \in \{0,1,\ldots,p-1\}$ such that
$i+1 = c_s$ and $i+j = c_t$. Therefore
\begin{equation}
 \label{eq:0Telement}
 \overline{w}_{i+1} + \overline{w}_{i+2} + \cdots + \overline{w}_{i+j-1}
 =
 \widehat{w}_{i+1} + \widehat{w}_{i+2} + \cdots + \widehat{w}_{i+j-1}
\end{equation}
for all $i\in {I}_0\cup {I}_T$ such that $i+j-1\in {I}_0\cup {I}_T$.
Each element of the multiset of the left (resp. right) hand side of 
$(\ref{eq:0T})$ is expressed as the left (resp. right) hand side of
$(\ref{eq:0Telement})$ and the lemma follows.
\end{proof}

\begin{lem}
 \label{lem:0Tsmallerthanm}
 For $j=1,2,\ldots,m$, 
 \[
 M^{(j)}_{0,T} = \widehat{M}^{(j)}_{0,T}
 \]
\end{lem}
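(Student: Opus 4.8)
The plan is to prove slightly more than stated: for $j=1,2,\ldots,m$, \emph{both} multisets $M^{(j)}_{0,T}$ and $\widehat{M}^{(j)}_{0,T}$ are empty, so that $(\ref{eq:elements})$ holds trivially for $(a,b)=(0,T)$ in this range of $j$. The reason is that the two index conditions defining each of these multisets are incompatible once $j\le m$: a window of length $j\le m$ whose rightmost index is of type $T$ is forced to sit inside a block of $m$ consecutive $1$'s, and therefore its leftmost index cannot be of type $0$.

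First I would handle $M^{(j)}_{0,T}$. Suppose, toward a contradiction, that $k\in I_0$ and $k+j-1\in I_T$ (all indices modulo $l$). By the definition of $I_T$ we have $\overline{w}_{[\,k+j-m,\;k+j-1\,]}=\stackrel{m}{\overbrace{1\cdots1}}$. Since $1\le j\le m$ we have $k+j-m\le k\le k+j-1$, so $k$ lies in this window; hence $\overline{w}_k=1$, contradicting $k\in I_0$. Thus no pair $(k,k+j-1)$ contributes, and $M^{(j)}_{0,T}=\emptyset$.

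Next I would handle $\widehat{M}^{(j)}_{0,T}$. Suppose $k\in\widehat{I}_0$ and $k-j+1\in\widehat{I}_T$. By the definition of $\widehat{I}_T$, $k-j+1=c_r$ for some $r$ and $\widehat{w}_{c_r}=1$. Since the block $\widehat{w}_{[c_r,\,c_{r+1}-1]}$ starts with a $1$, Lemma $\ref{lem:relbarhat}$ forces its first alternative: $c_{r+1}-c_r=m$ and $\widehat{w}_{[c_r,\,c_r+m-1]}=\stackrel{m}{\overbrace{1\cdots1}}$, i.e.\ $\widehat{w}_{[\,k-j+1,\;k-j+m\,]}=\stackrel{m}{\overbrace{1\cdots1}}$. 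Again $1\le j\le m$ gives $k-j+1\le k\le k-j+m$, so $\widehat{w}_k=1$, contradicting $k\in\widehat{I}_0$. Hence $\widehat{M}^{(j)}_{0,T}=\emptyset$ as well, and the two multisets coincide.

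I do not anticipate a real obstacle here; this is a short bookkeeping step. The only point requiring a little care is that on the $\widehat{w}$ side one genuinely needs Lemma $\ref{lem:relbarhat}$, and not merely the inclusion $\widehat{I}_T\subseteq\widehat{I}_1$, in order to know that the run of $1$'s extends $m$ positions to the \emph{right} of $c_r$ — whereas on the $\overline{w}$ side the definition of $I_T$ already records a block of $m$ ones ending at the index in question. The substantive difficulties in the proof of Theorem $\ref{thm:main}$ lie in the remaining pairs $(a,b)$ and in the induction on $j>m$, not in this lemma.
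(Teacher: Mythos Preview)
Your argument is correct and matches the paper's approach exactly: the paper's proof is the single line ``For $j\le m$, $M^{(j)}_{0,T}=\widehat{M}^{(j)}_{0,T}=\emptyset$,'' and you have simply supplied the (correct) verification of this emptiness, including the use of Lemma~\ref{lem:relbarhat} on the $\widehat{w}$ side.
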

\begin{proof}
 For $j\leq m$,
 $M^{(j)}_{0,T} = \widehat{M}^{(j)}_{0,T} = \emptyset$.
\end{proof}

\begin{lem}
\label{lem:rowcolumnsum}
 Let $j>1$  be an integer and assume that
 $M^{(j-1)}_{a,b} = \widehat{M}^{(j-1)}_{a,b}$
  for   $a,b\in\{0, T, H\}$.
 Then,
 for $a,b\in \{0,T,H\}$,
 \begin{equation}
  \label{eq:rowsum}
  M_{a,0}^{(j)}
 \cup
  \left(M_{a,T}^{(j)}-1\right)
 \cup
  \left(M_{a,H}^{(j)}-1\right)
 =
 \widehat{M}_{a,0}^{(j)}
 \cup
  \left(\widehat{M}_{a,T}^{(j)}-1\right)
 \cup
  \left(\widehat{M}_{a,H}^{(j)}-1\right),
 \end{equation}
 and
 \begin{equation}
  \label{eq:colsum}
  M_{0,b}^{(j)}
 \cup
  \left(M_{T,b}^{(j)}-1\right)
 \cup
  \left(M_{H,b}^{(j)}-1\right)
 =
 \widehat{M}_{0,b}^{(j)}
 \cup
  \left(\widehat{M}_{T,b}^{(j)}-1\right)
 \cup
  \left(\widehat{M}_{H,b}^{(j)}-1\right).
 \end{equation}
\end{lem}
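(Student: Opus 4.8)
The plan is to prove both identities at once by the following device: for a fixed first block $a$ (in $(\ref{eq:rowsum})$) or a fixed last block $b$ (in $(\ref{eq:colsum})$), the operation ``take the union over the remaining block while subtracting $1$ from the $T$- and $H$-contributions'' should be exactly ``erase one endpoint of the length-$j$ window'', collapsing every length-$j$ sum into a length-$(j-1)$ sum. Once that reduction is carried out, the resulting multiset is $\bigcup_{b}M^{(j-1)}_{a,b}$ (resp. $\bigcup_{a}M^{(j-1)}_{a,b}$) on the $\overline{w}$-side and $\bigcup_{b}\widehat{M}^{(j-1)}_{a,b}$ (resp. $\bigcup_{a}\widehat{M}^{(j-1)}_{a,b}$) on the $\widehat{w}$-side, so the induction hypothesis $M^{(j-1)}_{a,b}=\widehat{M}^{(j-1)}_{a,b}$ finishes the proof. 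Throughout I would keep in mind that $\overline{w}$ and $\widehat{w}$ are words on the cyclic index set $\{0,1,\dots,l-1\}$, that $I=I_0\sqcup I_T\sqcup I_H=\widehat{I}_0\sqcup\widehat{I}_T\sqcup\widehat{I}_H$, and that $I_T\cup I_H=I_1$ (the positions of the $1$'s of $\overline{w}$), and likewise $\widehat{I}_T\cup\widehat{I}_H=\widehat{I}_1$.

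For $(\ref{eq:rowsum})$ I would fix $a$ and observe that for each $k\in I_a$ the index $k+j-1$ lies in a unique block, with $\overline{w}_{k+j-1}=0$ iff $k+j-1\in I_0$ and $\overline{w}_{k+j-1}=1$ iff $k+j-1\in I_T\cup I_H$; hence the summand $\sum_{i=0}^{j-1}\overline{w}_{k+i}$ appearing in $M^{(j)}_{a,0}$ and the shifted summand $\sum_{i=0}^{j-1}\overline{w}_{k+i}-1$ appearing in $M^{(j)}_{a,T}-1$ and in $M^{(j)}_{a,H}-1$ both equal $\sum_{i=0}^{j-2}\overline{w}_{k+i}$, and each $k\in I_a$ contributes exactly once. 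So the left side of $(\ref{eq:rowsum})$ is the multiset $\left\{\sum_{i=0}^{j-2}\overline{w}_{k+i}\,\middle|\,k\in I_a\right\}$, which, after splitting by the block containing $k+j-2$, equals $\bigcup_{b\in\{0,T,H\}}M^{(j-1)}_{a,b}$. The same computation with $\widehat{w}$ — where now the erased endpoint is $\widehat{w}_{k-j+1}$, controlled by $b$ — shows the right side of $(\ref{eq:rowsum})$ equals $\bigcup_{b\in\{0,T,H\}}\widehat{M}^{(j-1)}_{a,b}$, and the induction hypothesis matches the two unions block by block.

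For $(\ref{eq:colsum})$ the endpoint that gets erased is instead the first one: $\overline{w}_k=0$ for $k\in I_0$ and $\overline{w}_k=1$ for $k\in I_T\cup I_H$, so the left side of $(\ref{eq:colsum})$ equals $\left\{\sum_{i=1}^{j-1}\overline{w}_{k+i}\,\middle|\,k\in I,\ k+j-1\in I_b\right\}$; a harmless cyclic re-indexing $k\mapsto k+1$ turns this into $\left\{\sum_{i=0}^{j-2}\overline{w}_{k+i}\,\middle|\,k\in I,\ k+j-2\in I_b\right\}=\bigcup_{a\in\{0,T,H\}}M^{(j-1)}_{a,b}$. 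Running the same argument on $\widehat{w}$ (the first entry of the backward window $\widehat{w}_k,\widehat{w}_{k-1},\dots$ is $\widehat{w}_k$, so here one re-indexes by $k\mapsto k-1$) yields $\bigcup_{a\in\{0,T,H\}}\widehat{M}^{(j-1)}_{a,b}$ for the right side, and again the induction hypothesis closes it. I do not anticipate any genuine difficulty beyond careful multiset bookkeeping: checking that the three subtractions cancel exactly the intended endpoint bit, that the partitions of $I$ are used to reassemble the unions with the correct multiplicities, and that the shift $k\mapsto k\pm1$ is applied consistently both to the index range and to the membership condition $k+j-1\in I_b$ (resp. $k-j+1\in\widehat{I}_b$).
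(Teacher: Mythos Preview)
Your proposal is correct and is essentially the same argument as the paper's: both reduce each side of $(\ref{eq:rowsum})$ and $(\ref{eq:colsum})$ to the full union $\bigcup_{c}M^{(j-1)}_{\bullet,c}$ (resp.\ $\bigcup_{c}\widehat{M}^{(j-1)}_{\bullet,c}$) by observing that the $-1$ on the $T$- and $H$-parts exactly cancels the contribution of one endpoint bit, and then invoke the induction hypothesis. The paper writes this as the identity $M_{0,b}^{(j)}\cup(M_{T,b}^{(j)}-1)\cup(M_{H,b}^{(j)}-1)=M_{0,b}^{(j-1)}\cup M_{T,b}^{(j-1)}\cup M_{H,b}^{(j-1)}$ (and its $\widehat{~}$-analogue), leaving the row case to ``similarly''; your write-up makes the endpoint-erasure and the cyclic re-indexing $k\mapsto k\pm1$ explicit, which is a welcome clarification but not a different idea.
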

\begin{proof}
 It is clear that
 \[
 M_{0,b}^{(j-1)}
 \cup
 M_{T,b}^{(j-1)}
 \cup
 M_{H,b}^{(j-1)}
 =
 \left\{
 \overline{w}_i
 +
 \overline{w}_{i+1}
 +
 \cdots
 +
 \overline{w}_{i+j-2}
 \,|\,
 i+j-2 \in {I}_b
 \right\},
 \]
 and
 \[
 M_{0,b}^{(j)}
 \cup
 M_{T,b}^{(j)}
 \cup
 M_{H,b}^{(j)}
 =
 \left\{
 \overline{w}_i
 +
 \overline{w}_{i+1}
 +
 \cdots
 +
 \overline{w}_{i+j-1}
 \,|\,
 i+j-1 \in {I}_b
 \right\}.
 \]
 Therefore, we have
 \[
  M_{0,b}^{(j)}
 \cup
  \left(M_{T,b}^{(j)}-1\right)
 \cup
  \left(M_{H,b}^{(j)}-1\right)
 =
 M_{0,b}^{(j-1)}
 \cup
 M_{T,b}^{(j-1)}
 \cup
 M_{H,b}^{(j-1)},
 \]
 and
 \[
 \widehat{M}_{0,b}^{(j)}
 \cup
  \left(\widehat{M}_{T,b}^{(j)}-1\right)
 \cup
  \left(\widehat{M}_{H,b}^{(j)}-1\right)
 =
 \widehat{M}_{0,b}^{(j-1)}
 \cup
 \widehat{M}_{T,b}^{(j-1)}
 \cup
 \widehat{M}_{H,b}^{(j-1)}.
 \]
 From the assumption that
 $M^{(j-1)}_{a,b} = \widehat{M}^{(j-1)}_{a,b}$
 for   $a,b\in\{0, T, H\}$, $(\ref{eq:colsum})$ follows.
 The proof of $(\ref{eq:rowsum})$ is similar.
\end{proof}

\begin{prop}
 \label{prop:smallerthanm}
 For $j=1,2,\ldots,m$ and $a,b\in\{0,T,H\}$,
\begin{equation}
 M_{a,b}^{(j)}=\widehat{M}_{a,b}^{(j)}.
\end{equation}
\end{prop}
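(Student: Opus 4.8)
The plan is to prove Proposition~$\ref{prop:smallerthanm}$ by induction on $j$ over the range $j=1,2,\ldots,m$. The base case $j=1$ is exactly identity $(\ref{eq:j1})$, so there is nothing to check. For the inductive step fix $j$ with $2\le j\le m$ and assume $M^{(j-1)}_{a,b}=\widehat{M}^{(j-1)}_{a,b}$ for all $a,b\in\{0,T,H\}$; the task is to deduce the nine equalities $M^{(j)}_{a,b}=\widehat{M}^{(j)}_{a,b}$. The engine throughout is cancellation in the monoid of multisets — $A\cup X=B\cup X\Rightarrow A=B$ — together with the fact that the shift $M\mapsto M-1$ is a bijection on multisets of integers.

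Three of the nine equalities are available immediately on our range of $j$: $M^{(j)}_{0,0}=\widehat{M}^{(j)}_{0,0}$ and $M^{(j)}_{T,T}=\widehat{M}^{(j)}_{T,T}$ by Lemma~$\ref{lem:00TT}$, and $M^{(j)}_{0,T}=\widehat{M}^{(j)}_{0,T}=\emptyset$ by Lemma~$\ref{lem:0Tsmallerthanm}$. Substituting these into the identity of Lemma~$\ref{lem:0orTto0orT}$ and cancelling the common summands $M^{(j)}_{0,0}$, $M^{(j)}_{0,T}$ and $M^{(j)}_{T,T}-1$ gives $M^{(j)}_{T,0}-1=\widehat{M}^{(j)}_{T,0}-1$, hence $M^{(j)}_{T,0}=\widehat{M}^{(j)}_{T,0}$. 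At this point the entire $\{0,T\}\times\{0,T\}$ block is settled.

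For the remaining five entries I would use Lemma~$\ref{lem:rowcolumnsum}$, which applies precisely because its hypothesis is our induction hypothesis, and solve the resulting $3\times3$ system of multiset identities in a safe order. From the row identity $(\ref{eq:rowsum})$ with $a=0$, after cancelling the known $M^{(j)}_{0,0}$ and $M^{(j)}_{0,T}-1$, one gets $M^{(j)}_{0,H}=\widehat{M}^{(j)}_{0,H}$; with $a=T$, cancelling $M^{(j)}_{T,0}$ and $M^{(j)}_{T,T}-1$, one gets $M^{(j)}_{T,H}=\widehat{M}^{(j)}_{T,H}$. From the column identity $(\ref{eq:colsum})$ with $b=0$, cancelling $M^{(j)}_{0,0}$ and $M^{(j)}_{T,0}-1$, one gets $M^{(j)}_{H,0}=\widehat{M}^{(j)}_{H,0}$, and with $b=T$ one gets $M^{(j)}_{H,T}=\widehat{M}^{(j)}_{H,T}$. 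Finally, the row identity with $a=H$, after cancelling the two entries $M^{(j)}_{H,0}$ and $M^{(j)}_{H,T}-1$ just obtained, forces $M^{(j)}_{H,H}=\widehat{M}^{(j)}_{H,H}$, which closes the induction.

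In effect the proposition has no combinatorial content of its own: all of it was front-loaded into Lemmas~$\ref{lem:00TT}$, $\ref{lem:0orTto0orT}$, $\ref{lem:0Tsmallerthanm}$ and $\ref{lem:rowcolumnsum}$, and the remaining work is purely organizational — choosing an order of the nine deductions in which every cancellation uses only already-established equalities, and checking that cancellation is legitimate in the multiset monoid. The one place the hypothesis $j\le m$ is genuinely needed is Lemma~$\ref{lem:0Tsmallerthanm}$, which makes both $M^{(j)}_{0,T}$ and $\widehat{M}^{(j)}_{0,T}$ empty; for $j>m$ this input vanishes and must be replaced by the separate induction on $j$ developed later, so I expect the real difficulty to sit just outside this proposition — in controlling the $(0,T)$ entry once $j$ exceeds $m$ — rather than inside it.
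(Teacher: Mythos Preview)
Your proposal is correct and follows essentially the same approach as the paper: induction on $j$ with base case $(\ref{eq:j1})$, the three ``free'' entries from Lemmas~$\ref{lem:00TT}$ and~$\ref{lem:0Tsmallerthanm}$, and then repeated multiset cancellation using Lemmas~$\ref{lem:0orTto0orT}$ and~$\ref{lem:rowcolumnsum}$ to peel off the remaining entries. The only difference is the order in which the six remaining entries are deduced (the paper does $(H,T)$, $(0,H)$, $(T,0)$, $(H,0)$, then $(T,H)$ and $(H,H)$), but both orders are valid and use exactly the same ingredients.
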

\begin{proof}
 %We show $M_{a,b}^{(j)}=\widehat{M}_{a,b}^{(j)}$ for $a,b\in\{0,T,H\}$
 %under the assumption that $(\ref{eq:hypothesis})$ holds.
 We have already shown this for
 $(a,b)=(0,0)$ and $(T,T)$ in Lemma $\ref{lem:00TT}$
 and for $(a,b)=(0,T)$  in Lemma $\ref{lem:0Tsmallerthanm}$.
 We already showed $M_{a,b}^{(1)}=\widehat{M}_{a,b}^{(1)}$ 
 for all $(a,b)\in\{0,H,T\}^2$ in $(\ref{eq:j1})$.
 Now we proceed with induction by assuming $M_{a,b}^{(j-1)}=\widehat{M}_{a,b}^{(j-1)}$
 for all $(a,b)\in\{0,H,T\}^2$.
 By $(\ref{eq:colsum})$, we have
 $M_{H,T}^{(j)} = \widehat{M}_{H,T}^{(j)}$ (shown by the symbol $\spadesuit$ in Figure $\ref{fig:lemmatables}$).
 Then, we have $M_{0,H}^{(j)} = \widehat{M}_{0,H}^{(j)}$ by $(\ref{eq:rowsum})$
 (shown by the symbol $\heartsuit$ in Figure $\ref{fig:lemmatables}$),
 and $M_{T,0}^{(j)} = \widehat{M}_{T,0}^{(j)}$ by Lemma $\ref{lem:0orTto0orT}$
 (shown by the symbol $\clubsuit$ in Figure $\ref{fig:lemmatables}$).
 Then $M_{H,0}^{(j)} = \widehat{M}_{H,0}^{(j)}$ follows from $(\ref{eq:colsum})$
 (shown by $\diamondsuit$).
 Finally $M_{T,H}^{(j)} = \widehat{M}_{T,H}^{(j)}$ and 
 $M_{H,H}^{(j)} = \widehat{M}_{H,H}^{(j)}$ follows from $(\ref{eq:rowsum})$
 (shown by  $\bigstar$).

\begin{figure}[H]
 \begin{center}
  \begin{tikzpicture}
   \foreach \x in {1,2,3}{
   \draw (\x,0) -- (\x,4);
   }
   \foreach \y in {1,2,3}{
   \draw (0,\y) -- (4,\y);
   }
   \draw (0.5,2.5) node {$H$};
   \draw (0.5,1.5) node {$T$};
   \draw (0.5,0.5) node {$0$};
   \draw (1.5,3.5) node {$H$};
   \draw (2.5,3.5) node {$T$};
   \draw (3.5,3.5) node {$0$};
   \draw (3.5,0.5) node {\small Lem.$\ref{lem:00TT}$};
   \draw (2.5,1.5) node {\small Lem.$\ref{lem:00TT}$};
   \draw (2.5,0.5) node {\small Lem.$\ref{lem:0Tsmallerthanm}$};
   \draw (2.5,2.5) node {$\spadesuit$};
   \draw (1.5,0.5) node {$\heartsuit$};
   \draw (3.5,1.5) node {$\clubsuit$};
   \draw (3.5,2.5) node {$\diamondsuit$};
   \draw (1.5,2.5) node {$\bigstar$};
   \draw (1.5,1.5) node {$\bigstar$};

   \begin{scope}[xshift=5cm]
    \foreach \x in {1,2,3}{
    \draw (\x,3) -- (\x,4);
    }
    \draw (1,0) -- (1,3)--(4,3);
    \draw[line width=1.5] (2,0) -- ++(2,0) -- ++ (0,2) -- ++(-2,0) -- cycle;
    \foreach \y in {1,2,3}{
    \draw (0,\y) -- (1,\y);
    %\draw[line width=1.5] (1,\y) -- ++(3,0) -- ++(0,-1) -- ++(-3,0) -- cycle ;
    }

    \draw (3,1) node {Lem.$\ref{lem:0orTto0orT}$};

    \draw (0.5,2.5) node {$H$};
    \draw (0.5,1.5) node {$T$};
    \draw (0.5,0.5) node {$0$};
    \draw (1.5,3.5) node {$H$};
    \draw (2.5,3.5) node {$T$};
    \draw (3.5,3.5) node {$0$};
   \end{scope}

   \begin{scope}[yshift=-4.5cm]
    \foreach \x in {1,2,3}{
    \draw (\x,3) -- (\x,4);
    }
    \foreach \y in {1,2,3}{
    \draw (0,\y) -- (4,\y);
    \draw[line width=1.5] (1,\y) -- ++(3,0) -- ++(0,-1) -- ++(-3,0) -- cycle ;
    }
    \draw (2.5,0.5) node {Lem.$\ref{lem:rowcolumnsum}$ $(\ref{eq:rowsum})$};
    \draw (2.5,1.5) node {Lem.$\ref{lem:rowcolumnsum}$ $(\ref{eq:rowsum})$};
    \draw (2.5,2.5) node {Lem.$\ref{lem:rowcolumnsum}$ $(\ref{eq:rowsum})$};

    \draw (0.5,2.5) node {$H$};
    \draw (0.5,1.5) node {$T$};
    \draw (0.5,0.5) node {$0$};
    \draw (1.5,3.5) node {$H$};
    \draw (2.5,3.5) node {$T$};
    \draw (3.5,3.5) node {$0$};
   \end{scope}

   \begin{scope}[xshift=5cm,yshift=-4.5cm]
    \foreach \x in {1,2,3}{
    \draw (\x,3) -- (\x,4);
    \draw[line width=1.5] (\x,3) -- ++(0,-3) -- ++(1,0) -- ++(0,3) -- cycle ;
    }
    \foreach \y in {1,2,3}{
    \draw (0,\y) -- (1,\y);
    }
    \draw (1.5,1.5) node {\rotatebox{-90}{Lem.$\ref{lem:rowcolumnsum}$ $(\ref{eq:colsum})$}};
    \draw (2.5,1.5) node {\rotatebox{-90}{Lem.$\ref{lem:rowcolumnsum}$ $(\ref{eq:colsum})$}};
    \draw (3.5,1.5) node {\rotatebox{-90}{Lem.$\ref{lem:rowcolumnsum}$ $(\ref{eq:colsum})$}};

    \draw (0.5,2.5) node {$H$};
    \draw (0.5,1.5) node {$T$};
    \draw (0.5,0.5) node {$0$};
    \draw (1.5,3.5) node {$H$};
    \draw (2.5,3.5) node {$T$};
    \draw (3.5,3.5) node {$0$};

   \end{scope}
  \end{tikzpicture}
  \caption{Illustration of how lemmas are related and used in the proof of Proposition $\ref{prop:smallerthanm}$}
  \label{fig:lemmatables}
 \end{center}
\end{figure}
\end{proof}

To extend Proposition $\ref{prop:smallerthanm}$ for 
$j > m$, we need to extend Lemma $\ref{lem:0Tsmallerthanm}$
for $j>m$. 
The following lemma is the key to this extension. It is proved
by induction on $j$ whose base case has been shown
as Proposition $\ref{prop:smallerthanm}$.

\begin{lem}
 \label{lem:0orTtoT}
 For $j=1,2,\ldots,n$,
 \[
 M^{(j)}_{0,T} \cup \left(M^{(j)}_{T,T}-1\right)= 
 \widehat{M}^{(j)}_{0,T} \cup \left(\widehat{M}^{(j)}_{T,T}-1\right).
 \]
\end{lem}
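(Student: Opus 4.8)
The plan is to rewrite both sides of the asserted identity purely in terms of the words $\overline w$, $\widehat w$ and the sequence of marks $0=c_0<c_1<\dots<c_p=l$, and then to run an induction on $j$ whose base case is Proposition~\ref{prop:smallerthanm}.

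The first step is bookkeeping. By $(\ref{eq:I0T})$ each left end $k\in I_0\cup I_T$ is $k=c_s-1$ for a unique $s$, and $\overline w_{c_s-1}=1$ precisely when $k\in I_T$, so the shift ``$-1$'' on $M^{(j)}_{T,T}$ exactly erases that leading bit; dually, by $(\ref{eq:hatI0T})$ and the value of $\widehat w_{c_u}$, one obtains the closed forms
\[
 M^{(j)}_{0,T}\cup\bigl(M^{(j)}_{T,T}-1\bigr)
 =\Bigl\{\textstyle\sum_{i=0}^{j-2}\overline w_{c_s+i}\ \Bigm|\ c_s+j-2\in I_T\Bigr\},
\qquad
 \widehat M^{(j)}_{0,T}\cup\bigl(\widehat M^{(j)}_{T,T}-1\bigr)
 =\Bigl\{\textstyle\sum_{i=1}^{j-1}\widehat w_{c_u-i}\ \Bigm|\ c_u-j+1\in\widehat I_T\Bigr\}.
\]
For $j\le m$ both $M^{(j)}_{0,T}$ and $\widehat M^{(j)}_{0,T}$ are empty (proof of Lemma~\ref{lem:0Tsmallerthanm}) and $M^{(j)}_{T,T}=\widehat M^{(j)}_{T,T}$ by Lemma~\ref{lem:00TT}, so the lemma for $j\le m$ is already contained in Proposition~\ref{prop:smallerthanm}; this is the base case.

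Now fix $j>m$ and assume the lemma for all smaller indices. By Lemma~\ref{lem:lemfirst} and Lemma~\ref{lem:relbarhat}, $c_s+j-2\in I_T$ happens exactly when $c_s+j-1-m$ and $c_s+j-1$ are consecutive marks with the block $\overline w_{[c_s+j-1-m,\,c_s+j-2]}$ equal to $\stackrel{m}{\overbrace{1\cdots1}}$ between them. Peeling this block off the right end of the window writes the left multiset as $m$ plus a multiset of window sums of length $j-1-m$, and peeling a $\stackrel{m}{\overbrace{1\cdots1}}$-block off the left does the same for the right multiset. By Lemma~\ref{lem:relbarhat} a block of complete marked intervals has the same number of $1$'s in $\overline w$ as in $\widehat w$, and this number is its length minus the number of those intervals that are not of $T$-type (i.e.\ not equal to $\stackrel{m}{\overbrace{1\cdots1}}$). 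After the common shift by $m$, the whole statement therefore reduces to a purely combinatorial identity on the cyclic word $\overline w$ cut at the marks: the multiset of ``numbers of non-$T$-type intervals met'' over those length-$(j-1)$ windows whose last complete interval is of $T$-type equals the corresponding multiset over the windows whose first complete interval is of $T$-type.

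To prove that identity I would split each of the two families according to whether the interval at the other end of the window is of $T$-type as well. A window of $T$-type at both ends occurs, with the same count, in both families, and the identity map matches these. For a window of $T$-type at exactly one end, removing that single $T$-block — which does not affect the count of non-$T$-type intervals — turns it into a length-$(j-1-m)$ window, and the comparison of the two resulting residual multisets is meant to be exactly the lemma at index $j-m<j$, i.e.\ the induction hypothesis. Reassembling the two matchings gives the identity at $j$, and with Lemma~\ref{lem:00TT} also $M^{(j)}_{0,T}=\widehat M^{(j)}_{0,T}$. The step I expect to be the real obstacle is precisely this last reduction: after peeling the boundary $T$-block one must verify that the surviving side condition on the mark lying just outside the shortened window is transported into exactly the condition $c_s+(j-m)-2\in I_T$ (resp.\ $c_u-(j-m)+1\in\widehat I_T$) that defines the two sides at index $j-m$, which forces one to use how the marks $c_k$ are produced by the $\rho$-dynamics and not merely the pattern of interval lengths, and that in doing so no window is counted twice.
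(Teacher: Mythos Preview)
Your bookkeeping and the base case are correct, and the idea of peeling the terminal $T$-block from each window is exactly the mechanism the paper uses.  The gap is in what you take as the induction hypothesis.

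Carry your reduction through explicitly.  After you remove the right $T$-block from a ``Part B'' window (last block $T$-type, first block not), the surviving window runs from a mark $c_a\in\widehat I_0$ to $c_b-1$ with $c_b\in\widehat I_T$ and $c_b-c_a=j-1-m$; tracking the value shift, this residual multiset is precisely $\widehat M^{(j-m)}_{T,0}+m-1$.  Symmetrically, peeling the left $T$-block from ``Part C'' produces $M^{(j-m)}_{T,0}+m-1$.  So what you actually need in order to match Part B with Part C is the individual equality
\[
M^{(j-m)}_{T,0}=\widehat M^{(j-m)}_{T,0},
\]
which is the $(T,0)$-cell of the $3\times 3$ table, \emph{not} Lemma~\ref{lem:0orTtoT} at index $j-m$ (that lemma lives in the $(0,T)$ and $(T,T)$ cells).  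There is no way to read the $(T,0)$ equality off from the $(0,T)\cup(T,T)$ statement alone, so the lemma does not close under self-induction in the way you propose.

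The paper handles this by running Lemma~\ref{lem:0orTtoT} as one step inside the global induction for Theorem~\ref{thm:main}: the hypothesis at step $j$ is that \emph{all nine} equalities $M^{(j')}_{a,b}=\widehat M^{(j')}_{a,b}$ hold for every $j'<j$.  With that granted, your Part~A/Part~B split is not even needed; using the bijections $(\ref{eq:0orT})$ and $(\ref{eq:T})$ of Lemma~\ref{lem:diff} one gets in one stroke
\[
M^{(j)}_{0,T}\cup\bigl(M^{(j)}_{T,T}-1\bigr)
=\bigl(\widehat M^{(j-m)}_{T,0}\cup\widehat M^{(j-m)}_{T,T}\bigr)+m-1,
\]
applies the full hypothesis at $j-m$, and then the mirror identity (coming from the reversed bijections) gives the $\widehat M$-side.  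Your expressed worry about needing ``how the marks $c_k$ are produced by the $\rho$-dynamics'' is misdirected: the block pattern of $\overline w$ already encodes all of that.  The genuine obstruction is that the surviving boundary condition after peeling lands in the $(T,0)$-cell rather than in the $(0,T)$-cell you were hoping to reach.
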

\begin{proof}
 We have already proved in Proposition $\ref{prop:smallerthanm}$ that
 $M_{a,b}^{(j)}=\widehat{M}_{a,b}^{(j)}$ for $j \leq m$ and $a,b\in\{0,H,T\}$.
 For $j > m$, from the bijections $(\ref{eq:0orT})$ and $(\ref{eq:T})$, we have
 \begin{equation}
  \label{eq:thekey}
   i\in I_0\cup I_T \mbox{ and } i+j-1\in I_T~~
   \Longleftrightarrow~~
   i+j-m \in \widehat{I}_T \mbox{ and } i+1 \in \widehat{I}_0 \cup \widehat{I}_T,
 \end{equation}
 and its {\em reverse version},
 \begin{equation}
  \label{eq:thekeyreverse}
  i\in \widehat{I}_0\cup \widehat{I}_T \mbox{ and } i-j+1\in \widehat{I}_T~~
 \Longleftrightarrow~~
 i-j+m \in I_T \mbox{ and } i-1 \in I_0 \cup I_T.
 \end{equation}
 Therefore, we have
 \begin{eqnarray*}
  M^{(j)}_{0,T} \cup \left(M^{(j)}_{T,T}-1\right)  
   & = &
   \left(\widehat{M}^{(j-m)}_{T,0} \cup \widehat{M}^{(j-m)}_{T,T}\right)+m-1\\
  & = & 
   \left(M^{(j-m)}_{T,0} \cup M^{(j-m)}_{T,T}\right)+m-1\\
  & = & 
  \widehat{M}^{(j)}_{0,T} \cup \left(\widehat{M}^{(j)}_{T,T}-1\right),
 \end{eqnarray*}
 in which the first equality comes from $(\ref{eq:thekey})$ (See Figure $\ref{fig:key}$),
 the second equality comes from the induction hypothesis,
 and the third equality comes from $(\ref{eq:thekeyreverse})$.

\begin{center}
 \begin{figure}[H]
  \begin{center}
   \begin{tikzpicture}
    \draw (-0.2,0) node {$\overline{w}$};
    \draw (-0.2,-1) node {$\widehat{w}$};
    \draw (-0, 0) -- (7,0);
    \draw (1.5, 0.1) -- (1.5,-0.1);
    \draw (6, 0.1) -- (6,-0.1);
    \draw (4.5, 0.1) -- (4.5,-0.1);
    \draw (1.3,0.25) node {$0$};
    \draw (1.3,-0.1) node {or};
    \draw (1.3,-0.5) node {$T$};
    \draw (5.8,0.25) node {$T$};
    
    \draw (2.5, 0.1) -- (2.5,-0.1);
    \draw (3.3, 0.1) -- (3.3,-0.1);

    \draw[line width=1,dotted] (2.5,0) -- (3.3,0);
    \draw[line width=1, >=latex, ->] (3.3,0) -- (4.5,0);
    \draw[line width=1, >=latex, ->,blue] (4.5,0) -- (6,0);
    \draw[line width=1, >=latex, ->] (1.5,0) -- (2.5,0);
    \draw[line width=1, dotted] (3.3,-1) -- (2.5,-1);
    \draw[line width=1, >=latex, <-] (1.5,-1) -- (2.5,-1);
    \draw[line width=1, >=latex, <-] (3.3,-1) -- (4.5,-1);
    \draw[dashed] (2.5, 0.) -- (2.5,-1);
    \draw[dashed] (3.3, 0.) -- (3.3,-1);
    
    \draw[dashed] (1.5,0) -- (1.5,-1);
    \draw[dashed] (4.5,0) -- (4.5,-1);
    \draw[dashed] (6,0) -- (6,-1);
    \draw (1.3,0.6) node {$i$};
    \draw (5.8,0.6) node {\tiny $i+j-1$};
    \begin{scope}[yshift=-1cm]
     \draw (4.7,-0.25) node {$T$};
     \draw (4.7,-0.7) node {\tiny $i+j-m$};
     \draw (-0, 0) -- (7,0);
     \draw (1.5, 0.1) -- (1.5,-0.1);
     \draw (4.5, 0.1) -- (4.5,-0.1);
     \draw (6, 0.1) -- (6,-0.1);
     \draw (2.5, 0.1) -- (2.5,-0.1);
     \draw (3.3, 0.1) -- (3.3,-0.1);
    \draw (1.7,0.35) node {$0$};
    \draw (1.7,0.1) node {or};
    \draw (1.7,-0.3) node {$T$};
    \draw (1.7,-0.6) node {\tiny $i+1$};
    \draw[line width=1, >=latex, <-,blue] (4.5,0) -- (6,0);
    \end{scope}
   \end{tikzpicture}
   \caption{  $M^{(j)}_{0,T} \cup \left(M^{(j)}_{T,T}-1\right)   = 
   \left(\widehat{M}^{(j-m)}_{T,0} \cup \widehat{M}^{(j-m)}_{T,T}\right)+m-1$.
   Note that we need to subtract one from each element of $M^{(j)}_{T,T}$ for the equality to hold.
   }
   \label{fig:key}
  \end{center}
 \end{figure}
\end{center}
\end{proof}

\begin{proof}[Proof of Theorem $\ref{thm:main}$]
We recall that Lemma $\ref{lem:0orTtoT}$ and Lemma $\ref{lem:00TT}$
hold for $j=1,2,\ldots,n$,
from which we have
\begin{equation}
 \label{eq:0T}
 M_{0,T}^{(j)}= \widehat{M}_{0,T}^{(j)}
\end{equation}
for $j=1,2,\ldots,n$.
The proof is same as the proof of Proposition $\ref{prop:smallerthanm}$ except 
using $(\ref{eq:0T})$ in place of Lemma $\ref{lem:0Tsmallerthanm}$. 
\end{proof}

\subsection{Orbit size and the number of orbits}
In this subsection we show some properties of the
orbit size and the number of orbits of the generalized rotation $\rho$. 
We construct a certain finite set $P$,
and a bijection $\pi:\{0,1\}^n\to P$.
Then we study the orbit structure of the map $\theta = \pi\circ\rho\circ\pi^{-1}:P\to P$ 
which is easier to analyze.

A {\em necklace} of length $n$ is an equivalence class of words over some
alphabet where two words are equivalent if they are (ordinary) rotations of each other.  
%An element of the equivalence class of a necklace will be called a rotation.

Let $w=w_0w_1\cdots w_{n-1}\in\{0,1\}^n$ be a binary word of length $n$
having $k$ zeros.
Suppose that the rising sequence $z_1 < z_2 < \cdots < z_k$ is the indices of zeros
in $w$, that is, $w_i=0$ if and only if $i\in\{z_1,\ldots,z_k\}$.
Then the {\em one run encoding} of $w$ is the sequence
$z_1(z_2-z_1-1)\cdots(z_k-z_{k-1}-1)(n-z_k-1)$,
which we denote ${\rm ore}(w)$.
For example,
\[
 {\rm ore}(1011110)=1(6-1-1)(7-6-1)=140, \mbox{ ~~ }
 {\rm ore}(0111101)=0(5-0-1)(7-5-1)=041.
\]
Let $E_k$ be the ${\rm ore}$'s image of the binary
sequences in $\{0,1\}^n$ having $k$ zeros,
that is,
%$k$ be a non negative integer not greater than  $n$
%and $E_k$ a set of integer sequences defined by
\[
 E_k = \left\{
 a_0a_1\cdots a_k\,\middle|\,
 a_i\in {\mathbf Z},\,
 a_i\geq 0,\,
 k+\sum_{i=0}^k a_i = n
 \right\},
\]
and define 
\[
 E = \bigcup_{k=0}^nE_k.
\]
Then, since ${\rm ore}\left(\stackrel{a_0}{\overbrace{11\cdots 1}}0\stackrel{a_1}{\overbrace{11\cdots 1}}0\cdots 0\stackrel{a_k}{\overbrace{11\cdots 1}}\right)
=a_0a_1\cdots a_k$,
the map ${\rm ore}:\{0,1\}^n\to E$ is a bijection.
%Let $B_k$ be the set of the binary sequences of length $n$ having $k$ zeros,
%\[
%B_k=\left\{s_0s_1\cdots s_{n-1}\in\{0,1\}^n\,\middle|\, \sum_{i=0}^{n-1} s_i = n-k\right\}.
%\]
%Let $T_k$ be the set of the sequences of non-negative integers of length $k+1$ summing to $n-k$,
%\[
% T_k = \left\{t_0t_1\cdots t_k \in {\mathbb Z}^{k+1}\,\middle|\, t_0,t_1,\ldots t_k\geq 0, \sum_{i=0}^kt_i=n-k\right\}.
%\]
%Then clearly ${\rm ore}:B_k\to T_k$ is a bijection.

\begin{lem}\label{lem:rotate}
 Suppose that $w\in\{0,1\}^n$ and ${\rm ore}(w)=a_0 a_1 \cdots a_{k}$.
 Then, 
 \[
 {\rm ore}(\rho(w))=\begin{cases}
		     a_1a_2\cdots a_{k} a_0 & \mbox{ if } a_0 < m,\\
		     (a_0-m)a_1 \cdots a_{k-1}(a_{k}+m) & \mbox{ if }a_0 \ge m.
		    \end{cases}
 \]
\end{lem}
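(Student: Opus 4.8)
The plan is to unwind the definition of $\rho$ through the \emph{one run encoding} by directly translating the two cases of $\rho$'s definition into the language of the exponent sequence. First I would fix $w\in\{0,1\}^n$ with ${\rm ore}(w)=a_0a_1\cdots a_k$, which by the bijectivity of ${\rm ore}$ means $w=1^{a_0}0\,1^{a_1}0\cdots 0\,1^{a_k}$. The crucial observation is that the initial block structure of $w$ is governed entirely by $a_0$: we have $w_{[0,a_0]}=1^{a_0}0$ (when $k\geq 1$, i.e.\ there is at least one zero), so the condition ``there exists $k'<m$ with $w_{[0,k']}=1^{k'}0$'' in the definition of $\rho$ is satisfied precisely when $a_0<m$, with the witness $k'=a_0$. (The degenerate case $k=0$, i.e.\ $w=1^n$, should be noted separately; then $a_0=n\geq m$ since $m\leq n$, and $\rho(w)=1^n$ again, consistent with the second case.)

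Next I would handle the two cases in turn. In the case $a_0<m$: by the definition $\rho(w)=w_{a_0+1}w_{a_0+2}\cdots w_{n-1}0\,1^{a_0}$. Since $w=1^{a_0}0\,1^{a_1}0\cdots 0\,1^{a_k}$, deleting the prefix $w_{[0,a_0]}=1^{a_0}0$ leaves exactly $1^{a_1}0\,1^{a_2}0\cdots 0\,1^{a_k}$, and then appending $0\,1^{a_0}$ produces $1^{a_1}0\cdots 0\,1^{a_k}0\,1^{a_0}$, whose one run encoding is $a_1a_2\cdots a_k a_0$ — this is the cyclic shift, as claimed. In the case $a_0\geq m$: by definition $\rho(w)=w_mw_{m+1}\cdots w_{n-1}1^m$. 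Since $a_0\geq m$, the first $m$ letters of $w$ are all $1$, so deleting them leaves $1^{a_0-m}0\,1^{a_1}0\cdots 0\,1^{a_k}$; appending $1^m$ merges with the final block $1^{a_k}$ to give $1^{a_0-m}0\,1^{a_1}0\cdots 0\,1^{a_k+m}$, whose encoding is $(a_0-m)a_1\cdots a_{k-1}(a_k+m)$, again as claimed.

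I expect the only real subtlety — the ``main obstacle'' in an otherwise routine verification — to be the bookkeeping at the boundary cases: when $k=0$ (no zeros, $w=1^n$), when $a_0=0$ (so $w$ starts with $0$ and the $m<1$... rather $k'=0$ witness applies, $\rho$ just does an ordinary left shift by one), and checking that the merging of the appended $1^m$ with the trailing block $1^{a_k}$ is correctly reflected as addition $a_k+m$ in the exponent sequence rather than the creation of a spurious new block. I would also double-check that in the first case the length is preserved: the new encoding $a_1\cdots a_k a_0$ has $k+1$ entries summing with $k$ zeros to $k+\sum a_i = n$, exactly as required for membership in $E_k$; and in the second case $(a_0-m)+a_1+\cdots+(a_k+m) = \sum a_i$, so the entry sum is unchanged and the encoding still lies in $E_k$. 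Once these edge cases are dispatched, the lemma follows immediately from matching the two displayed cases of $\rho$ against the two displayed cases of ${\rm ore}(\rho(w))$.
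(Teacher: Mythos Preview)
Your proposal is correct and takes essentially the same approach as the paper: both arguments simply read off what $\rho$ does to the block decomposition $w=1^{a_0}0\,1^{a_1}0\cdots 0\,1^{a_k}$ in each of the two cases. Your version is more explicit about the edge cases ($k=0$, $a_0=0$, length preservation) than the paper's two-sentence proof, but the underlying idea is identical.
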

\begin{proof}
  If the binary word starts with $j$ consecutive ones followed by a zero and $j
  < m$, then the operation of $\rho$ removes the $j$ ones and the zero from the
  head of the word and then adding the zero and the $j$ ones to the tail of the
  word.

  If the binary word starts with $j \ge m$ consecutive ones then the operation
  of $\rho$ moves $m$ ones from the head to the tail of the word, thereby
  decreasing $a_0$ by $m$ and increasing $a_k$ by $m$.
\end{proof}

For example in table \ref{tab:orbit9} we have the orbit of 1011110
\begin{table}[H]
  \centering
  \begin{tabular}{c|c|c}
    $k$& $\rho^k(w)$ & $\text{ore}(\rho^k(w))$\\
    \hline
    0 & 1011110 & 140 \\
    1 & 1111001 & 401 \\
    2 & 1001111 & 104 \\
    3 & 0111101 & 041 \\
    4 & 1111010 & 410 \\
    5 & 1010111 & 113 \\
    6 & 1011101 & 131 \\
    7 & 1110101 & 311 \\
    8 & 0101111 & 014 \\
  \end{tabular}
  \caption{An orbit of length 9}
  \label{tab:orbit9}
\end{table}

%As we will often need to distinguish between the action of $\rho$ on words that
%start with $m$ or more ones from the action of $\rho$ on words that start with
%fewer than $m$ ones we will call the action of $\rho$ on words starting with $m$
%or more ones {\em m}-moves, and the action on words starting with fewer than $m$
%ones {\em z}-moves (a {\em z}ero is moved). 

%Seeing that $\rho$ either redistributes $m$ from the front to the back or
%rotates the entire word, we construct two words derived from the run length
%encoding, the by separating the quotient and remainder of each term of the ore
%after dividing by $m$

\begin{dfn}
   Given the one run encoding $a_0a_1 \cdots a_{k}$ of a binary word $w$ we
   define the {\em quotient word} ${\rm qw}(w)=q_0 q_1 \cdots
   q_{k}$ and the {\em remainder word} ${\rm rw}(w)=r_0 r_1 \cdots r_{k}$  where
   \[a_i = r_i + m q_i\]
   and
   \[0 \le r_i < m.\]
   We now define the {\em binary quotient word} ${\rm bqw}(w)=b_0 b_1 \dots b_{n_q}$ as
   the binary word whose one run encoding is the quotient word.
 We call the map $\pi: w \mapsto ({\rm rw}(w),{\rm bqw}(w))$ the {\em  encoding map}.
 \end{dfn}

 Note that the binary quotient word, having a one run encoding of length $k+1$,
 has exactly $k$ zeros. 
 Also note that for any remainder word $r_0\cdots r_k$
 and quotient word
 $a_0\cdots a_{k}$ both of length $k+1$, there exists 
 exactly one binary word having $k$ zeros whose one run
 encoding is $a_0\cdots a_{k}$ where $a_i \equiv r_i\pmod{m}$, and so any pair of a
 reminder word of length $k+1$ and binary quotient word having $k$ zeros specify
 a particular binary word (also having $k$ zeros).
 This fact enables us to  construct a 
 finite set $P$ so that the map $\pi:\{0,1\}^n \to P$ is a bijection.
 By considering the effect of $\rho$ mapped on $P$, we obtain
 some properties of the $\rho$-orbits.

 %Looking at the action of lemma \ref{lem:rotate} on the remainder word and
 %binary quotient word we have the following lemma.
 \begin{dfn}
 We define $P_k$ to be $\pi$'s image of the set of binary words in $\{0,1\}^n$ having $k$ zeros,
  that is,
  \begin{eqnarray*}
  P_k & = &\left\{
  (r_0\cdots r_k, b_0\cdots b_{n_q})\in
  \{0,1,\ldots,m-1\}^{k+1}\times\{0,1\}^{n_q}
  \,\middle|\,\right.\\
   & & 
    \hspace{3cm}\left.n_q \geq k,~ k+\sum_{i=0}^{n_q}b_i = n_q+1,~ 
		 k+m\sum_{i=0}^{n_q}b_i+\sum_{i=0}^{k} r_i=n
		 %m(n_q+1-k)+\sum_{i=0}^{k} r_i+k=n
  \right\},
  \end{eqnarray*}
 and we define
 \[
  P = \bigcup_{k=0}^n P_k.
 \]
  Then we define the map $\theta: P \to P$ by
  \[
   \theta(r_0r_1\cdots r_{k},b_0b_1\cdots b_{n_q}) = 
  \begin{cases}
   (r_1\cdots r_kr_0, b_1\cdots b_{n_q}b_0 ) & \mbox{ if } b_0=0,\\
   (r_0r_1\cdots r_k, b_1\cdots b_{n_q}b_0) & \mbox{ if } b_0=1.
  \end{cases}
  \]
 \end{dfn}

Note that both $\theta:P\to P$ and $\pi:\{0,1\}^n\to P$ are bijections.
Also note that given the pair $\pi(w)=({\rm rw}(w), {\rm bqw}(w))$, $\theta$
rotates both ${\rm rw}(w)$ and ${\rm bqw}(w)$ if ${\rm bqw}(w)_0=0$,
and rotates only ${\rm bqw}(w)$ if ${\rm bqw}(w)_0=1$.

\begin{lem}
\label{lem:commute}
 The following diagram commutes, i.e., $\theta = \pi\rho\pi^{-1}$.
 \begin{figure}[H]
  \begin{center}
   \begin{tikzpicture}
    \node(X1) at (0,2) {$\{0,1\}^n$};
    \node(X2) at (3,2) {$\{0,1\}^n$};
    \node(P1) at (0,0) {$P$};
    \node(P2) at (3,0) {$P$};
    \draw (1.5,2) node [above] {$\rho$};
    \draw (1.5,0) node [below] {$\theta$};
    \draw (0,1) node [left] {$\pi$};
    \draw (3,1) node [right] {$\pi$};
    \draw[->,>=latex] (P1) to (P2);
    \draw[->,>=latex] (X1) to (X2);
    \draw[->,>=latex] (X1) to (P1);
    \draw[->,>=latex] (X2) to (P2);
   \end{tikzpicture}
  \end{center}
 \end{figure}
\end{lem}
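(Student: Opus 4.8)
The plan is to check the identity $\pi(\rho(w))=\theta(\pi(w))$ pointwise, by pushing an arbitrary $w\in\{0,1\}^n$ through $\rho$ with Lemma~\ref{lem:rotate}, re-encoding the result, and matching it against the two branches of the definition of $\theta$. Fix notation: write ${\rm ore}(w)=a_0a_1\cdots a_k$ and $a_i=r_i+mq_i$ with $0\le r_i<m$, so that ${\rm rw}(w)=r_0\cdots r_k$, ${\rm qw}(w)=q_0\cdots q_k$, and ${\rm bqw}(w)=b_0\cdots b_{n_q}$ is the binary word whose one run encoding is $q_0\cdots q_k$. The dictionary that links the two case splits is the elementary fact that the first letter of ${\rm bqw}(w)$ counts its leading ones, i.e.\ $b_0=1\iff q_0\ge 1$, together with $q_0=\lfloor a_0/m\rfloor$; hence
\[
 b_0=0 \iff q_0=0 \iff a_0<m,
\]
so that the case $b_0=0$ of $\theta$ corresponds to $a_0<m$ and the case $b_0=1$ to $a_0\ge m$ in Lemma~\ref{lem:rotate}. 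I will also use the $m=1$ instance of Lemma~\ref{lem:rotate}: ordinary rotation sends a one run encoding $c_0\cdots c_\ell$ to $c_1\cdots c_\ell c_0$ when $c_0=0$ and to $(c_0-1)c_1\cdots c_{\ell-1}(c_\ell+1)$ when $c_0\ge 1$. Applied to ${\rm bqw}(w)$, this says exactly that the ordinary rotation $b_1\cdots b_{n_q}b_0$ appearing in both branches of $\theta$ has one run encoding $q_1\cdots q_k q_0$ if $q_0=0$ and $(q_0-1)q_1\cdots q_{k-1}(q_k+1)$ if $q_0\ge 1$.

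Now I would run the two cases. If $a_0<m$ then $q_0=0$, $a_0=r_0$, and Lemma~\ref{lem:rotate} gives ${\rm ore}(\rho(w))=a_1\cdots a_k a_0$; dividing termwise by $m$ yields ${\rm rw}(\rho(w))=r_1\cdots r_k r_0$ and ${\rm qw}(\rho(w))=q_1\cdots q_k q_0$, so ${\rm bqw}(\rho(w))$ is the word with one run encoding $q_1\cdots q_k q_0$, which by the previous paragraph is $b_1\cdots b_{n_q}b_0$. Hence $\pi(\rho(w))=(r_1\cdots r_k r_0,\ b_1\cdots b_{n_q}b_0)=\theta(\pi(w))$, matching the $b_0=0$ branch. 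If $a_0\ge m$ then $q_0\ge 1$ and Lemma~\ref{lem:rotate} gives ${\rm ore}(\rho(w))=(a_0-m)a_1\cdots a_{k-1}(a_k+m)$; from $a_0-m=r_0+m(q_0-1)$ and $a_k+m=r_k+m(q_k+1)$ we read off ${\rm rw}(\rho(w))=r_0\cdots r_k={\rm rw}(w)$ and ${\rm qw}(\rho(w))=(q_0-1)q_1\cdots q_{k-1}(q_k+1)$, so ${\rm bqw}(\rho(w))=b_1\cdots b_{n_q}b_0$ again. Hence $\pi(\rho(w))=(r_0\cdots r_k,\ b_1\cdots b_{n_q}b_0)=\theta(\pi(w))$, matching the $b_0=1$ branch. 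Since $\pi$ maps $\{0,1\}^n$ bijectively onto $P$, both sides lie in $P$, so the diagram commutes.

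The only real friction I anticipate is with degenerate cases of the one run encoding, the chief one being $k=0$, i.e.\ $w=1^n$: here $a_0=n\ge m$ (as $m\le n$), the formula of Lemma~\ref{lem:rotate} must be read as leaving the single-entry encoding unchanged, and one checks directly that $\rho$ and $\theta$ both fix this word (on the $P$ side, ${\rm bqw}(w)=1^{q_0}$ is rotation-invariant). Similarly, when $q_k=0$ the operation ``append a $1$ to the final run'' turns the last entry $0$ of the quotient word into $1$, and when $q_0=1$ the leading run of ${\rm bqw}(w)$ disappears after rotation; both are consistent with the one run encoding bookkeeping above. None of these is deep once the equivalence $b_0=0\iff a_0<m$ is in hand, so the substance of the proof is simply aligning the two case distinctions.
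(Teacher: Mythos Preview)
Your proof is correct and follows exactly the approach the paper intends: the paper's own proof is the single sentence ``This follows directly from Lemma~\ref{lem:rotate},'' and what you have written is a careful unpacking of that sentence, aligning the case split $a_0<m$ versus $a_0\ge m$ of Lemma~\ref{lem:rotate} with the case split $b_0=0$ versus $b_0=1$ of $\theta$ via $b_0=0\iff q_0=0\iff a_0<m$. Your treatment of the degenerate $k=0$ case is also fine and goes beyond what the paper spells out.
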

\begin{proof}
  This follows directly from lemma \ref{lem:rotate}.  
 %If $a_0<m$ then $b_0=0$,  otherwise $b_0=1$. 
\end{proof}

Thus we can compute the orbit size of $\rho$ by
computing the orbit size of $\theta$ instead.
\begin{prop}
\label{prop:orbitsize}
Let $w\in\{0,1\}^n$ and $\pi(w)=\left(r_0r_1\cdots r_k,\, b_0b_1\cdots b_{n_q}\right)$.
Let $s$ be the size of the necklace containing  $r_0r_1\cdots r_k$.
That is, we suppose $s$ is the smallest positive integer such that
\[
 r_sr_{s+1}\cdots r_kr_0\cdots r_{s-1} = r_0r_1\cdots r_k.
\]
Also let $t$ be the size of the necklace containing $b_0\cdots b_{n_q}$.
Then the size of $\rho$-orbit of $w$ is $st$.
\end{prop}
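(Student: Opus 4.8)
The plan is to work entirely on the side of $P$ via the bijection $\pi$ and the conjugate map $\theta$, using Lemma~\ref{lem:commute}: since $\pi$ is a bijection and $\theta=\pi\rho\pi^{-1}$, the $\rho$-orbit of $w$ has the same size as the $\theta$-orbit of $\pi(w)=(r_0\cdots r_k,\,b_0\cdots b_{n_q})$. So it suffices to show that the $\theta$-orbit of this pair has size $st$, where $s$ is the necklace size of the remainder word $r_0\cdots r_k$ and $t$ is the necklace size of the binary quotient word $b_0\cdots b_{n_q}$.

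\textbf{Key steps.} First I would record the basic bookkeeping fact about how $\theta$ acts: reading off the definition of $\theta$, each application cyclically shifts the binary word $b_0\cdots b_{n_q}$ by one position to the left, and \emph{additionally} cyclically shifts the remainder word $r_0\cdots r_k$ by one position exactly when the bit $b_0$ being rotated off is $0$. Since the binary quotient word has exactly $k$ zeros (noted just before the definition of $P_k$), in one full pass of $t$ steps — i.e. one full cycle bringing $b_0\cdots b_{n_q}$ back to itself — the number of times a $0$ is rotated off the front of $b$ is exactly the number of $0$s in one period of the necklace of $b$. Here I must be slightly careful: $t$ is the \emph{period} (necklace size), which may be a proper divisor of $n_q+1$, so in $t$ steps the number of zeros shifted off is $k\cdot t/(n_q+1) =: k'$, the number of zeros in one period; what matters is that after $t$ steps the $b$-component has returned to its original value and the $r$-component has been cyclically shifted by $k'$ positions. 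Second, I would iterate: after $jt$ steps the $b$-component is back to $b_0\cdots b_{n_q}$ and the $r$-component has been shifted by $jk'$ positions (mod $k+1$). Third, I claim the $\theta$-orbit closes up precisely at the first $j$ for which $jk'\equiv 0$ in the necklace of $r_0\cdots r_k$ — but one must also check the orbit does not close at some step that is \emph{not} a multiple of $t$; that is immediate because the $b$-component alone must return, which forces the step count to be a multiple of $t$. So the orbit size is $t$ times the smallest $j$ with $r$ shifted by $jk'$ equal to $r_0\cdots r_k$, i.e. $jk'\equiv 0\pmod{s}$ (using that $s\mid k+1$ and the necklace structure), which is $t\cdot s/\gcd(s,k')$.

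\textbf{The main obstacle, and its resolution.} The apparent gap is that $t\cdot s/\gcd(s,k')$ is what the naive count gives, yet the statement asserts the orbit size is simply $st$. The resolution — and the step I expect to be the crux — is a divisibility constraint coming from the fact that we are inside the \emph{finite} set $P_k$, equivalently that $w$ is an honest word of length $n$: one must show $\gcd(s,k')=1$, i.e. that $k'$ (the number of zeros in one period of $b$) is coprime to $s$ (the period of $r$), or more precisely that the relevant shift amount is coprime to $s$. I would prove this by the standard necklace argument: if $\gcd(s,k')=d>1$, then combining the $s$-periodicity of $r$ with the way $r$ and $b$ are interleaved inside $w^{(p)}$ (or directly inside the one-run encoding) would force a finer common period on the pair, contradicting minimality of either $s$ or $t$; alternatively one argues that $\theta^{st/d}$ already fixes $(r,b)$, and unwinding what this means for $w$ via Lemma~\ref{lem:rotate} exhibits a smaller period, contradicting the definition of $p$. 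Once $\gcd(s,k')=1$ is established, $t\cdot s/\gcd(s,k') = st$ and the proposition follows. I would also double-check the degenerate cases $k=0$ (no zeros, $r$ is the empty word, $s=1$) and $n_q=k$ (quotient word all ones so $b$ has no... wait, $b$ has $k$ zeros and $k+1$ entries in its one-run encoding, so $n_q = k$ means all $b_i$ relevant), making sure the formula $st$ still reads correctly there.
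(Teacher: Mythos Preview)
Your overall architecture is right and matches the paper: pass to $\theta$ via Lemma~\ref{lem:commute}, observe that $\theta^t$ restores $b$ and cyclically shifts $r$ by $k'$ (the number of zeros in one $t$-period of $b$), and conclude the orbit size is $t\cdot s/\gcd(s,k')$. You also correctly isolate the crux: one must show $\gcd(s,k')=1$.

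The gap is in how you propose to resolve that crux. Both of your suggested routes fail. Your second alternative is circular: you say ``$\theta^{st/d}$ already fixes $(r,b)$, \dots\ contradicting the definition of $p$'', but $p$ \emph{is} the orbit size, and you have not yet established $p=st$; if $d>1$ then $p\le st/d$ is simply what happens, not a contradiction. Your first alternative (``force a finer common period on the pair, contradicting minimality of $s$ or $t$'') is too vague to be an argument, and in fact no such contradiction with the minimality of $s$ or $t$ \emph{individually} is available: $s$ and $t$ are defined as periods of $r$ and $b$ separately, and nothing about their individual minimality is violated by $\gcd(s,k')>1$.

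What you are missing is a short arithmetic observation, which is exactly what the paper uses. The remainder word $r_0\cdots r_k$ has length $k+1$ and period $s$, so $s\mid(k+1)$; hence $\gcd(s,k)=1$. On the other hand $b_0\cdots b_{n_q}$ consists of $(n_q+1)/t$ copies of its period $b_0\cdots b_{t-1}$, each contributing $k'$ zeros to the total of $k$ zeros, so $k'\mid k$. Combining, any common divisor of $s$ and $k'$ divides both $s$ and $k$, hence equals $1$. That single line replaces your hand-waved ``standard necklace argument'' and finishes the proof.
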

\begin{proof}
 Note that
 \[
  r_0\cdots r_k = (r_0\cdots r_{s-1})(r_0\cdots r_{s-1})\cdots (r_0\cdots r_{s-1}),
 \]
 which is a repetition of the subword of length $s$, and
 \[
  b_0\cdots b_{n_q} = (b_0\cdots b_{t-1})(b_0\cdots b_{t-1})\cdots (b_0\cdots b_{t-1}).
 \]
 Therefore, there is a positive integer $g$ such that $k+1=sg$ and hence
 $k$ and $s$ are coprime.
 Let $k'$ be the number of zeros contained in $b_0\cdots b_{t-1}$. Then
 $k'$ is clearly a divisor of $k$ and
\begin{eqnarray*}
 \theta^{t}(\pi(w)) & = &\left(r_{k'}r_{k'+1}\cdots r_{k'-1},\, b_0b_1\cdots b_{n_q}\right)\\
 & = & \left((r_{k''}r_{k''+1}\cdots r_{k''+s-1})(r_{k''}r_{k''+1}\cdots r_{k''+s-1})\cdots(r_{k''}r_{k''+1}\cdots r_{k''+s-1}),\, b_0b_1\cdots b_{n_q}\right)\\
 & = & \left((r_{k''}r_{k''+1}\cdots r_{k''-1})(r_{k''}r_{k''+1}\cdots r_{k''-1})\cdots(r_{k''}r_{k''+1}\cdots r_{k''-1}),\, b_0b_1\cdots b_{n_q}\right)
\end{eqnarray*}
where $k''=k'\pmod{s}$ is considered to be an integer in $\{0,1,\ldots,s-1\}$.
Then, $k''$ and $s$ are coprime, since otherwise $k'$ and $s$ have a common divisor which
is also a common divisor of $k$ and $s$. This contradicts the fact $k$ and $s$ are coprime.
Therefore, we have $\theta^{st}(\pi(w))=\pi(w)$ and
$\theta^f(\pi(w))\neq \pi(w)$ for $f=1,2,\ldots,st-1$.
\end{proof}

Table $\ref{tab:sqw}$ shows an example showing the effect of $\rho$ on $P$.
Using Proposition $\ref{prop:orbitsize}$, we can efficiently
decompose the space $\{0,1\}^n$ into $\rho$-orbits by decomposing $P_k$
into $\theta$-orbits.
Table $\ref{tab:orbdecomp}$ shows an example of $\rho$-orbit decompositions.
Proposition $\ref{prop:orbitsize}$ also gives the maximum size
of a $\rho$-orbits.
%$\rho$-orbit decomposition with $n=7$ and $m=3$.

\begin{table}[H]  
  \centering
  \begin{tabular}{|c|c|c|c|c|c|}
    \hline
    $e$& $\rho^e(w)$ & $\text{ore}(\rho^e(w))$& $\text{rw}(\rho^e(w))$& $\text{qw}(\rho^e(w))$& $\text{bqw}(\rho^e(w))$\\
    \hline
    0 & 1011110 & 140 & 110 & 010 & 010\\
    1 & 1111001 & 401 & 101 & 100 & 100\\
    2 & 1001111 & 104 & 101 & 001 & 001\\
    3 & 0111101 & 041 & 011 & 010 & 010\\
    4 & 1111010 & 410 & 110 & 100 & 100\\
    5 & 1010111 & 113 & 110 & 001 & 001\\
    6 & 1011101 & 131 & 101 & 010 & 010\\
    7 & 1110101 & 311 & 011 & 100 & 100\\
    8 & 0101111 & 014 & 011 & 001 & 001\\
    \hline
  \end{tabular}
  \caption{An orbit of length 9}
  \label{tab:sqw}
\end{table}

\begin{center}
\begin{table}[H]
 \begin{center}
  \begin{tabular}{|l|l|l|l|l|}
\hline
$k$ & $w$ & ${\rm bqw}(w)$ & ${\rm rw}(w)$ & period length\\
\hline
 0 & 1111111 & 11 & 1 & $1 \times 1=1$\\
1 & 1111101 & 10 & 21 & $2 \times 2=4$\\
1 & 1111110 & 110 & 00 & $3 \times 1=3$\\
2 & 1101101 & 00 & 221 & $1 \times 3=3$\\
2 & 1111010 & 100 & 110 & $3 \times 3=9$\\
2 & 1111100 & 100 & 200 & $3 \times 3=9$\\
3 & 1100110 & 000 & 2020 & $1 \times 2=2$\\
3 & 1101010 & 000 & 2110 & $1 \times 4=4$\\
3 & 1100101 & 000 & 2011 & $1 \times 4=4$\\
3 & 1101100 & 000 & 2200 & $1 \times 4=4$\\
3 & 1101001 & 000 & 2101 & $1 \times 4=4$\\
3 & 1010101 & 000 & 1111 & $1 \times 1=1$\\
3 & 1111000 & 1000 & 1000 & $4 \times 4=16$\\
4 & 1010010 & 0000 & 11010 & $1 \times 5=5$\\
4 & 1101000 & 0000 & 21000 & $1 \times 5=5$\\
4 & 1100100 & 0000 & 20100 & $1 \times 5=5$\\
4 & 1100010 & 0000 & 20010 & $1 \times 5=5$\\
4 & 1100001 & 0000 & 20001 & $1 \times 5=5$\\
4 & 1010100 & 0000 & 11100 & $1 \times 5=5$\\
4 & 1110000 & 10000 & 00000 & $5 \times 1=5$\\
5 & 1100000 & 00000 & 200000 & $1 \times 6=6$\\
5 & 1001000 & 00000 & 101000 & $1 \times 6=6$\\
5 & 1010000 & 00000 & 110000 & $1 \times 6=6$\\
5 & 1000100 & 00000 & 100100 & $1 \times 3=3$\\
6 & 1000000 & 000000 & 1000000 & $1 \times 7=7$\\
7 & 0000000 & 0000000 & 00000000 & $1 \times 1=1$\\
\hline
  \end{tabular}
 \end{center}
 \caption{$\rho$-orbit decomposition of $\{0,1\}^7$ with $m=3$}
 \label{tab:orbdecomp}
\end{table}
\end{center}

\begin{cor}
 The maximum size of a $\rho$-orbit in $\{0,1\}^n$ is
 $\max\left\{(n-m)^2, n\right\}$.
\end{cor}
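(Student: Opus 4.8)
The plan is to read off every orbit size from Proposition~\ref{prop:orbitsize} and then optimize. Fix $w\in\{0,1\}^n$, let $k$ be the number of zeros of $w$, and write $\pi(w)=(r_0\cdots r_k,\,b_0\cdots b_{n_q})\in P_k$. By Proposition~\ref{prop:orbitsize} the size of the $\rho$-orbit of $w$ equals $st$, where $s$ is the period of the remainder word $r_0\cdots r_k$ (so $s$ divides $k+1$, hence $s\le k+1$) and $t$ is the period of the binary quotient word $b_0\cdots b_{n_q}$ (so $t$ divides $n_q+1$, hence $t\le n_q+1$). Put $z=\sum_i b_i$, so the binary quotient word has length $n_q+1=k+z$ with exactly $k$ zeros and $z$ ones; the relations defining $P_k$ give $k+mz+\sum_i r_i=n$, and in particular $k+mz\le n$.

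For the upper bound I would argue by cases on $z$. If $k=0$ then $w=1^{n}$ and $s=t=1$. If $z=0$ then the binary quotient word is $0^{k}$, so $t=1$; moreover $\sum_i r_i=n-k$, so either $\sum_i r_i=0$, in which case the remainder word is $0^{k+1}$ and $s=1$, or $\sum_i r_i\ge1$, in which case $k\le n-1$ and the orbit has size $s\le k+1\le n$. Now suppose $k\ge1$ and $z\ge1$. If $z\ge2$ then $k\le n-mz\le n-2m$ and $k+z\le n-mz+z\le n-2m+2$, so the orbit size is at most $(k+1)(k+z)\le(n-2m+1)(n-2m+2)$, and a short computation (using $n\ge1+2m$ in this case) shows this is strictly smaller than $(n-m)^2$ whenever $m\ge2$. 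If $z=1$ then $n_q+1=k+1$, so $s,t\le k+1$ and $\sum_i r_i=n-m-k$; either $\sum_i r_i=0$, in which case the remainder word is $0^{k+1}$, $s=1$, and the orbit has size $t\le k+1=n-m+1\le n$, or $\sum_i r_i\ge1$, in which case $k\le n-m-1$ and the orbit has size at most $(k+1)^2\le(n-m)^2$. Finally, if $m=1$ the remainder word is always $0^{k+1}$, so $s=1$ and the orbit has size $t\le n_q+1\le n$. Hence every orbit has size at most $\max\{(n-m)^2,n\}$.

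For the lower bound I would exhibit explicit extremal words. The word $w=1\,0^{n-1}$ has $\pi(w)$ equal to a primitive length-$n$ necklace in one coordinate and a constant word in the other, so by Proposition~\ref{prop:orbitsize} its orbit has size $n$. When $m\ge2$ and $n-m\ge2$ (equivalently $(n-m)^2\ge n$), the word $w=1\,0^{n-m-1}\,1^{m}$ has $k=n-m-1$ zeros and one run encoding $1\,0^{n-m-2}\,m$, so ${\rm rw}(w)=1\,0^{n-m-1}$ and ${\rm bqw}(w)=0^{n-m-1}\,1$; both are primitive words of length $n-m$, hence $s=t=n-m$ and the orbit has size $(n-m)^2$. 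Together with the upper bound this proves the corollary.

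The step I expect to be the main obstacle is the optimization hidden in the upper bound: the crude estimate $st\le(k+1)(k+z)$ by itself only gives $(n-m+1)^2$, and recovering the sharp value $(n-m)^2$ relies on splitting on whether $\sum_i r_i$ vanishes---when it does, the remainder word is constant and $s=1$, so the orbit is small, and when it does not, one gains the extra unit in the estimate $k\le n-mz-1$. Handling this dichotomy uniformly across the cases on $z$, together with the elementary inequality $(n-2m+1)(n-2m+2)\le(n-m)^2$ for $m\ge2$, is the only delicate point; everything else is bookkeeping with the relations defining $P_k$.
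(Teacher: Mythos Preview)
Your approach is the same as the paper's---read off orbit sizes from Proposition~\ref{prop:orbitsize} as $st$, bound by cases, then exhibit extremal words---but the paper's upper-bound case split is cleaner. Instead of separating $z=1$ from $z\ge2$, the paper simply observes that if \emph{both} ${\rm rw}(w)$ and ${\rm bqw}(w)$ contain a nonzero entry then $\sum r_i\ge1$ and $z\ge1$, whence $k+mz\le n-1$ gives $k+1\le n-m$ and $k+z\le n-m$ in one stroke, so $st\le(n-m)^2$ with no further subdivision. Your extremal word $1\,0^{n-m-1}1^m$ is different from the paper's $1^{m+1}0^{n-m-1}$, but both yield ${\rm rw}$ and ${\rm bqw}$ primitive of length $n-m$, so both work.

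Two small issues. First, your parenthetical ``(equivalently $(n-m)^2\ge n$)'' is not an equivalence in either direction: for $m=10$, $n=12$ one has $m\ge2$ and $n-m\ge2$ but $(n-m)^2=4<12$. This does not break the argument, since when $m\ge2$ and $n-m\le1$ one has $(n-m)^2\le1<n$ and the size-$n$ witness suffices; but you should drop the word ``equivalently''. Second---and this is a defect in the paper's statement rather than in your proof---the corollary is false for $m=1$: ordinary rotation on $\{0,1\}^n$ has maximum orbit size $n$, not $(n-1)^2$. Your own upper bound for $m=1$ correctly gives $n$, and neither you nor the paper constructs an orbit of size $(n-1)^2$ in that case (because none exists). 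The argument as written, and the paper's, should be read as establishing the result for $m\ge2$.
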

\begin{proof}
It is clear that the $\rho$-orbit of $w=100\cdots 0$ is of size $n$.
If the binary quotient word ${\rm bqw}(w)$ contains no one, i.e.,
${\rm bqw}(w)=00\cdots 0$,
then the size of the necklace of ${\rm rw}(w)$ is less than or equal to $n$,
and hence the $\rho$-orbit size of $w$ does not exceed $n$.
If ${\rm bqw}(w)$ contains at least one one
and  ${\rm rw}(w)$ contains no one,
then the $\rho$-orbit size of $w$ is $n-m+1 \leq n$.

If both ${\rm bqw}(w)$ and ${\rm rw}(w)$
contain ones
then the lengths of ${\rm bqw}(w)$ and  ${\rm rw}(w)$
are both less than or equal to  $n-m$.
Therefore 
the $\rho$-orbit size of $w$ does not exceed $(n-m)^2$,
which is attained when 
$w=\stackrel{m+1}{\overbrace{11\cdots 1}}\stackrel{n-m-1}{\overbrace{00\cdots 0}}$.

\end{proof}

\section{Toggle dynamical system on $X_N$}
\label{sec:toggle}

Let $N$ be a positive integer not smaller than  $m$.
Let $X_N$ denote the subset of $\{0,1\}^N$ defined by
\[
 X_N=
 \left\{w=w_0w_1\cdots w_{N-1} \in \{0,1\}^N\,|\, w_i+w_{i+1}+\cdots+w_{i+m}\leq 1 \mbox{~for~} i=0,1,\ldots,N-m-1\right\}.
\]
In other words, $X_N$ is the set of the words $w$ of length $N$,
each of whose subwords of the form $w_{[i,i+m]}$ does not contain more than one $1$'s.
We consider the dynamical system $(X_N,\varphi)$ with the state space $X_N$
and the transformation $\varphi:X_N\rightarrow X_N$ defined as follows:
The {\em toggle} map $\tau_i:X_N\rightarrow X_N$ is defined by
\[
 \tau_i(w) = 
 \begin{cases}
  w_0w_1\cdots w_{i-1}(1-w_i)w_{i+1}\cdots w_{N-1}
  &
  w_0w_1\cdots(1-w_i)\cdots w_{N-1} \in X_N,\\
  w
  &
  w_0w_1\cdots(1-w_i)\cdots w_{N-1} \not\in X_N,
 \end{cases}
\]
and $\varphi = \tau_{N-1}\circ\tau_{N-2}\circ\cdots\circ\tau_0$.
It is clear that every $\tau_i$ is a bijections from $X_N$
to itself, and so is $\varphi$.
Therefore $\varphi$ decomposes $X_N$ into $\varphi$-orbits.

Joseph and Roby\cite{JR} studied the dynamical system
$(X_N,\varphi)$ for $m=1$
and showed some surprising properties.
In particular, they showed the symmetry of the digit sum of orbits:
When $m=1$, for every $w\in X_N$ and $j\in\{0,1,\ldots,N-1\}$
\begin{equation}
 \label{eq:main}
 \sum_{k=0}^{p-1}\varphi^k(w)_j
 =
 \sum_{k=0}^{p-1}\varphi^k(w)_{N-1-j},
\end{equation}
where $p$ is the length of the $\varphi$-orbit of $w$
and $\varphi^k(w)_j$ is the $j$-th digit of the word $\varphi^k(w)$.

The key idea of the proof of
$(\ref{eq:main})$ for $m=1$
by Joseph and Roby\cite{JR} 
is the reduction of the toggle dynamical
system to the rotation of the bit sequences
by using the notion of the {\em snakes}.
We show that $(\ref{eq:main})$ holds for general $m$
%the toggle dynamical sysytems on $X_{n,m}$ has 
%the similar property 
by reducing it to a dynamical
system driven by the generalized rotations
which has been discussed in previous sections.

Let $\{0,1\}^*$ denote the set of finite words over the alphabet $\{0,1\}$,
and let $w=w_0w_1w_2\ldots w_{|w|-1}\in\{0,1\}^*$ be a finite word.
Then, we define $a(w)=\sum_{i=0}^{|w|-1}w_i$ and $b(w)=|w|-a(w)$, that is,
$a(w)$ is the number of $1$'s in $w$ and $b(w)$ the number of $0$'s.
Define the subset $Y_{n}$ of $\{0,1\}^*$ by
\[
 Y_{n} =\{w\in\{0,1\}^*\,|\, a(w)+(m+1)b(w)=n\}.
\]
\begin{exm}
When $m=3$, we have
$Y_3=\{111\}$, $Y_4=\{1111, 0\}$, 
$Y_5=\{11111, 10, 01\}$, and $Y_6=\{111111,110, 101, 011\}$.
\end{exm}

\begin{table}[H]
 \begin{center}
  \begin{tikzpicture}[yscale=0.4, xscale=0.6]
\draw (-3,31.5) -- (14,31.5);
\draw (-3,.5) -- (14,.5);
\draw (-1,32) node {$j$};
\node[color=red] (0-0) at (0,31) [circle,draw,inner sep=0] {1};
\node[color=red] (1-1) at (1,30) [circle,draw,inner sep=0] {1};
\draw[color=red] (0-0) edge (1-1);
\node[color=red] (1-5) at (5,30) [circle,draw,inner sep=0] {1};
\draw[color=red] (1-1) edge (1-5);
\node[color=red] (2-6) at (6,29) [circle,draw,inner sep=0] {1};
\draw[color=red] (1-5) edge (2-6);
\node[color=red] (3-7) at (7,28) [circle,draw,inner sep=0] {1};
\draw[color=red] (2-6) edge (3-7);
\node[color=red] (4-8) at (8,27) [circle,draw,inner sep=0] {1};
\draw[color=red] (3-7) edge (4-8);
\node[color=red] (5-9) at (9,26) [circle,draw,inner sep=0] {1};
\draw[color=red] (4-8) edge (5-9);
\node[color=red] (5-13) at (13,26) [circle,draw,inner sep=0] {1};
\draw[color=red] (5-9) edge (5-13);
\node[color=red] (3-0) at (0,28) [circle,draw,inner sep=0] {1};
\node[color=red] (4-1) at (1,27) [circle,draw,inner sep=0] {1};
\draw[color=red] (3-0) edge (4-1);
\node[color=red] (5-2) at (2,26) [circle,draw,inner sep=0] {1};
\draw[color=red] (4-1) edge (5-2);
\node[color=red] (6-3) at (3,25) [circle,draw,inner sep=0] {1};
\draw[color=red] (5-2) edge (6-3);
\node[color=red] (7-4) at (4,24) [circle,draw,inner sep=0] {1};
\draw[color=red] (6-3) edge (7-4);
\node[color=red] (7-8) at (8,24) [circle,draw,inner sep=0] {1};
\draw[color=red] (7-4) edge (7-8);
\node[color=red] (7-12) at (12,24) [circle,draw,inner sep=0] {1};
\draw[color=red] (7-8) edge (7-12);
\node[color=red] (8-13) at (13,23) [circle,draw,inner sep=0] {1};
\draw[color=red] (7-12) edge (8-13);
\node[color=red] (8-0) at (0,23) [circle,draw,inner sep=0] {1};
\node[color=red] (9-1) at (1,22) [circle,draw,inner sep=0] {1};
\draw[color=red] (8-0) edge (9-1);
\node[color=red] (9-5) at (5,22) [circle,draw,inner sep=0] {1};
\draw[color=red] (9-1) edge (9-5);
\node[color=red] (9-9) at (9,22) [circle,draw,inner sep=0] {1};
\draw[color=red] (9-5) edge (9-9);
\node[color=red] (10-10) at (10,21) [circle,draw,inner sep=0] {1};
\draw[color=red] (9-9) edge (10-10);
\node[color=red] (11-11) at (11,20) [circle,draw,inner sep=0] {1};
\draw[color=red] (10-10) edge (11-11);
\node[color=red] (12-12) at (12,19) [circle,draw,inner sep=0] {1};
\draw[color=red] (11-11) edge (12-12);
\node[color=red] (13-13) at (13,18) [circle,draw,inner sep=0] {1};
\draw[color=red] (12-12) edge (13-13);
\node[color=red] (11-0) at (0,20) [circle,draw,inner sep=0] {1};
\node[color=red] (11-4) at (4,20) [circle,draw,inner sep=0] {1};
\draw[color=red] (11-0) edge (11-4);
\node[color=red] (12-5) at (5,19) [circle,draw,inner sep=0] {1};
\draw[color=red] (11-4) edge (12-5);
\node[color=red] (13-6) at (6,18) [circle,draw,inner sep=0] {1};
\draw[color=red] (12-5) edge (13-6);
\node[color=red] (14-7) at (7,17) [circle,draw,inner sep=0] {1};
\draw[color=red] (13-6) edge (14-7);
\node[color=red] (15-8) at (8,16) [circle,draw,inner sep=0] {1};
\draw[color=red] (14-7) edge (15-8);
\node[color=red] (15-12) at (12,16) [circle,draw,inner sep=0] {1};
\draw[color=red] (15-8) edge (15-12);
\node[color=red] (16-13) at (13,15) [circle,draw,inner sep=0] {1};
\draw[color=red] (15-12) edge (16-13);
\node[color=red] (13-0) at (0,18) [circle,draw,inner sep=0] {1};
\node[color=red] (14-1) at (1,17) [circle,draw,inner sep=0] {1};
\draw[color=red] (13-0) edge (14-1);
\node[color=red] (15-2) at (2,16) [circle,draw,inner sep=0] {1};
\draw[color=red] (14-1) edge (15-2);
\node[color=red] (16-3) at (3,15) [circle,draw,inner sep=0] {1};
\draw[color=red] (15-2) edge (16-3);
\node[color=red] (17-4) at (4,14) [circle,draw,inner sep=0] {1};
\draw[color=red] (16-3) edge (17-4);
\node[color=red] (17-8) at (8,14) [circle,draw,inner sep=0] {1};
\draw[color=red] (17-4) edge (17-8);
\node[color=red] (18-9) at (9,13) [circle,draw,inner sep=0] {1};
\draw[color=red] (17-8) edge (18-9);
\node[color=red] (18-13) at (13,13) [circle,draw,inner sep=0] {1};
\draw[color=red] (18-9) edge (18-13);
\node[color=red] (18-0) at (0,13) [circle,draw,inner sep=0] {1};
\node[color=red] (19-1) at (1,12) [circle,draw,inner sep=0] {1};
\draw[color=red] (18-0) edge (19-1);
\node[color=red] (19-5) at (5,12) [circle,draw,inner sep=0] {1};
\draw[color=red] (19-1) edge (19-5);
\node[color=red] (20-6) at (6,11) [circle,draw,inner sep=0] {1};
\draw[color=red] (19-5) edge (20-6);
\node[color=red] (20-10) at (10,11) [circle,draw,inner sep=0] {1};
\draw[color=red] (20-6) edge (20-10);
\node[color=red] (21-11) at (11,10) [circle,draw,inner sep=0] {1};
\draw[color=red] (20-10) edge (21-11);
\node[color=red] (22-12) at (12,9) [circle,draw,inner sep=0] {1};
\draw[color=red] (21-11) edge (22-12);
\node[color=red] (23-13) at (13,8) [circle,draw,inner sep=0] {1};
\draw[color=red] (22-12) edge (23-13);
\node[color=red] (21-0) at (0,10) [circle,draw,inner sep=0] {1};
\node[color=red] (22-1) at (1,9) [circle,draw,inner sep=0] {1};
\draw[color=red] (21-0) edge (22-1);
\node[color=red] (22-5) at (5,9) [circle,draw,inner sep=0] {1};
\draw[color=red] (22-1) edge (22-5);
\node[color=red] (23-6) at (6,8) [circle,draw,inner sep=0] {1};
\draw[color=red] (22-5) edge (23-6);
\node[color=red] (24-7) at (7,7) [circle,draw,inner sep=0] {1};
\draw[color=red] (23-6) edge (24-7);
\node[color=red] (25-8) at (8,6) [circle,draw,inner sep=0] {1};
\draw[color=red] (24-7) edge (25-8);
\node[color=red] (25-12) at (12,6) [circle,draw,inner sep=0] {1};
\draw[color=red] (25-8) edge (25-12);
\node[color=red] (26-13) at (13,5) [circle,draw,inner sep=0] {1};
\draw[color=red] (25-12) edge (26-13);
\node[color=red] (24-0) at (0,7) [circle,draw,inner sep=0] {1};
\node[color=red] (25-1) at (1,6) [circle,draw,inner sep=0] {1};
\draw[color=red] (24-0) edge (25-1);
\node[color=red] (26-2) at (2,5) [circle,draw,inner sep=0] {1};
\draw[color=red] (25-1) edge (26-2);
\node[color=red] (27-3) at (3,4) [circle,draw,inner sep=0] {1};
\draw[color=red] (26-2) edge (27-3);
\node[color=red] (27-7) at (7,4) [circle,draw,inner sep=0] {1};
\draw[color=red] (27-3) edge (27-7);
\node[color=red] (28-8) at (8,3) [circle,draw,inner sep=0] {1};
\draw[color=red] (27-7) edge (28-8);
\node[color=red] (28-12) at (12,3) [circle,draw,inner sep=0] {1};
\draw[color=red] (28-8) edge (28-12);
\node[color=red] (29-13) at (13,2) [circle,draw,inner sep=0] {1};
\draw[color=red] (28-12) edge (29-13);
\node[color=red] (29-0) at (0,2) [circle,draw,inner sep=0] {1};
\node[color=red] (29-4) at (4,2) [circle,draw,inner sep=0] {1};
\draw[color=red] (29-0) edge (29-4);
\node[color=red] (30-5) at (5,1) [circle,draw,inner sep=0] {1};
\draw[color=red] (29-4) edge (30-5);
\node[color=red] (30-9) at (9,1) [circle,draw,inner sep=0] {1};
\draw[color=red] (30-5) edge (30-9);
\node[color=red] (0-10) at (10,31) [circle,draw,inner sep=0] {1};
\node[color=red] (1-11) at (11,30) [circle,draw,inner sep=0] {1};
\draw[color=red] (0-10) edge (1-11);
\node[color=red] (2-12) at (12,29) [circle,draw,inner sep=0] {1};
\draw[color=red] (1-11) edge (2-12);
\node[color=red] (3-13) at (13,28) [circle,draw,inner sep=0] {1};
\draw[color=red] (2-12) edge (3-13);
\node[] at (-2,31)  {$\varphi^{0}(w)$};
\node[] at (1,31)  {0};
\node[] at (2,31)  {0};
\node[] at (3,31)  {0};
\node[] at (4,31)  {0};
\node[] at (5,31)  {0};
\node[] at (6,31)  {0};
\node[] at (7,31)  {0};
\node[] at (8,31)  {0};
\node[] at (9,31)  {0};
\node[] at (11,31)  {0};
\node[] at (12,31)  {0};
\node[] at (13,31)  {0};
\node[] at (-2,30)  {$\varphi^{1}(w)$};
\node[] at (0,30)  {0};
\node[] at (2,30)  {0};
\node[] at (3,30)  {0};
\node[] at (4,30)  {0};
\node[] at (6,30)  {0};
\node[] at (7,30)  {0};
\node[] at (8,30)  {0};
\node[] at (9,30)  {0};
\node[] at (10,30)  {0};
\node[] at (12,30)  {0};
\node[] at (13,30)  {0};
\node[] at (-2,29)  {$\varphi^{2}(w)$};
\node[] at (0,29)  {0};
\node[] at (1,29)  {0};
\node[] at (2,29)  {0};
\node[] at (3,29)  {0};
\node[] at (4,29)  {0};
\node[] at (5,29)  {0};
\node[] at (7,29)  {0};
\node[] at (8,29)  {0};
\node[] at (9,29)  {0};
\node[] at (10,29)  {0};
\node[] at (11,29)  {0};
\node[] at (13,29)  {0};
\node[] at (-2,28)  {$\varphi^{3}(w)$};
\node[] at (1,28)  {0};
\node[] at (2,28)  {0};
\node[] at (3,28)  {0};
\node[] at (4,28)  {0};
\node[] at (5,28)  {0};
\node[] at (6,28)  {0};
\node[] at (8,28)  {0};
\node[] at (9,28)  {0};
\node[] at (10,28)  {0};
\node[] at (11,28)  {0};
\node[] at (12,28)  {0};
\node[] at (-2,27)  {$\varphi^{4}(w)$};
\node[] at (0,27)  {0};
\node[] at (2,27)  {0};
\node[] at (3,27)  {0};
\node[] at (4,27)  {0};
\node[] at (5,27)  {0};
\node[] at (6,27)  {0};
\node[] at (7,27)  {0};
\node[] at (9,27)  {0};
\node[] at (10,27)  {0};
\node[] at (11,27)  {0};
\node[] at (12,27)  {0};
\node[] at (13,27)  {0};
\node[] at (-2,26)  {$\varphi^{5}(w)$};
\node[] at (0,26)  {0};
\node[] at (1,26)  {0};
\node[] at (3,26)  {0};
\node[] at (4,26)  {0};
\node[] at (5,26)  {0};
\node[] at (6,26)  {0};
\node[] at (7,26)  {0};
\node[] at (8,26)  {0};
\node[] at (10,26)  {0};
\node[] at (11,26)  {0};
\node[] at (12,26)  {0};
\node[] at (-2,25)  {$\varphi^{6}(w)$};
\node[] at (0,25)  {0};
\node[] at (1,25)  {0};
\node[] at (2,25)  {0};
\node[] at (4,25)  {0};
\node[] at (5,25)  {0};
\node[] at (6,25)  {0};
\node[] at (7,25)  {0};
\node[] at (8,25)  {0};
\node[] at (9,25)  {0};
\node[] at (10,25)  {0};
\node[] at (11,25)  {0};
\node[] at (12,25)  {0};
\node[] at (13,25)  {0};
\node[] at (-2,24)  {$\varphi^{7}(w)$};
\node[] at (0,24)  {0};
\node[] at (1,24)  {0};
\node[] at (2,24)  {0};
\node[] at (3,24)  {0};
\node[] at (5,24)  {0};
\node[] at (6,24)  {0};
\node[] at (7,24)  {0};
\node[] at (9,24)  {0};
\node[] at (10,24)  {0};
\node[] at (11,24)  {0};
\node[] at (13,24)  {0};
\node[] at (-2,23)  {$\varphi^{8}(w)$};
\node[] at (1,23)  {0};
\node[] at (2,23)  {0};
\node[] at (3,23)  {0};
\node[] at (4,23)  {0};
\node[] at (5,23)  {0};
\node[] at (6,23)  {0};
\node[] at (7,23)  {0};
\node[] at (8,23)  {0};
\node[] at (9,23)  {0};
\node[] at (10,23)  {0};
\node[] at (11,23)  {0};
\node[] at (12,23)  {0};
\node[] at (-2,22)  {$\varphi^{9}(w)$};
\node[] at (0,22)  {0};
\node[] at (2,22)  {0};
\node[] at (3,22)  {0};
\node[] at (4,22)  {0};
\node[] at (6,22)  {0};
\node[] at (7,22)  {0};
\node[] at (8,22)  {0};
\node[] at (10,22)  {0};
\node[] at (11,22)  {0};
\node[] at (12,22)  {0};
\node[] at (13,22)  {0};
\node[] at (-2,21)  {$\varphi^{10}(w)$};
\node[] at (0,21)  {0};
\node[] at (1,21)  {0};
\node[] at (2,21)  {0};
\node[] at (3,21)  {0};
\node[] at (4,21)  {0};
\node[] at (5,21)  {0};
\node[] at (6,21)  {0};
\node[] at (7,21)  {0};
\node[] at (8,21)  {0};
\node[] at (9,21)  {0};
\node[] at (11,21)  {0};
\node[] at (12,21)  {0};
\node[] at (13,21)  {0};
\node[] at (-2,20)  {$\varphi^{11}(w)$};
\node[] at (1,20)  {0};
\node[] at (2,20)  {0};
\node[] at (3,20)  {0};
\node[] at (5,20)  {0};
\node[] at (6,20)  {0};
\node[] at (7,20)  {0};
\node[] at (8,20)  {0};
\node[] at (9,20)  {0};
\node[] at (10,20)  {0};
\node[] at (12,20)  {0};
\node[] at (13,20)  {0};
\node[] at (-2,19)  {$\varphi^{12}(w)$};
\node[] at (0,19)  {0};
\node[] at (1,19)  {0};
\node[] at (2,19)  {0};
\node[] at (3,19)  {0};
\node[] at (4,19)  {0};
\node[] at (6,19)  {0};
\node[] at (7,19)  {0};
\node[] at (8,19)  {0};
\node[] at (9,19)  {0};
\node[] at (10,19)  {0};
\node[] at (11,19)  {0};
\node[] at (13,19)  {0};
\node[] at (-2,18)  {$\varphi^{13}(w)$};
\node[] at (1,18)  {0};
\node[] at (2,18)  {0};
\node[] at (3,18)  {0};
\node[] at (4,18)  {0};
\node[] at (5,18)  {0};
\node[] at (7,18)  {0};
\node[] at (8,18)  {0};
\node[] at (9,18)  {0};
\node[] at (10,18)  {0};
\node[] at (11,18)  {0};
\node[] at (12,18)  {0};
\node[] at (-2,17)  {$\varphi^{14}(w)$};
\node[] at (0,17)  {0};
\node[] at (2,17)  {0};
\node[] at (3,17)  {0};
\node[] at (4,17)  {0};
\node[] at (5,17)  {0};
\node[] at (6,17)  {0};
\node[] at (8,17)  {0};
\node[] at (9,17)  {0};
\node[] at (10,17)  {0};
\node[] at (11,17)  {0};
\node[] at (12,17)  {0};
\node[] at (13,17)  {0};
\node[] at (-2,16)  {$\varphi^{15}(w)$};
\node[] at (0,16)  {0};
\node[] at (1,16)  {0};
\node[] at (3,16)  {0};
\node[] at (4,16)  {0};
\node[] at (5,16)  {0};
\node[] at (6,16)  {0};
\node[] at (7,16)  {0};
\node[] at (9,16)  {0};
\node[] at (10,16)  {0};
\node[] at (11,16)  {0};
\node[] at (13,16)  {0};
\node[] at (-2,15)  {$\varphi^{16}(w)$};
\node[] at (0,15)  {0};
\node[] at (1,15)  {0};
\node[] at (2,15)  {0};
\node[] at (4,15)  {0};
\node[] at (5,15)  {0};
\node[] at (6,15)  {0};
\node[] at (7,15)  {0};
\node[] at (8,15)  {0};
\node[] at (9,15)  {0};
\node[] at (10,15)  {0};
\node[] at (11,15)  {0};
\node[] at (12,15)  {0};
\node[] at (-2,14)  {$\varphi^{17}(w)$};
\node[] at (0,14)  {0};
\node[] at (1,14)  {0};
\node[] at (2,14)  {0};
\node[] at (3,14)  {0};
\node[] at (5,14)  {0};
\node[] at (6,14)  {0};
\node[] at (7,14)  {0};
\node[] at (9,14)  {0};
\node[] at (10,14)  {0};
\node[] at (11,14)  {0};
\node[] at (12,14)  {0};
\node[] at (13,14)  {0};
\node[] at (-2,13)  {$\varphi^{18}(w)$};
\node[] at (1,13)  {0};
\node[] at (2,13)  {0};
\node[] at (3,13)  {0};
\node[] at (4,13)  {0};
\node[] at (5,13)  {0};
\node[] at (6,13)  {0};
\node[] at (7,13)  {0};
\node[] at (8,13)  {0};
\node[] at (10,13)  {0};
\node[] at (11,13)  {0};
\node[] at (12,13)  {0};
\node[] at (-2,12)  {$\varphi^{19}(w)$};
\node[] at (0,12)  {0};
\node[] at (2,12)  {0};
\node[] at (3,12)  {0};
\node[] at (4,12)  {0};
\node[] at (6,12)  {0};
\node[] at (7,12)  {0};
\node[] at (8,12)  {0};
\node[] at (9,12)  {0};
\node[] at (10,12)  {0};
\node[] at (11,12)  {0};
\node[] at (12,12)  {0};
\node[] at (13,12)  {0};
\node[] at (-2,11)  {$\varphi^{20}(w)$};
\node[] at (0,11)  {0};
\node[] at (1,11)  {0};
\node[] at (2,11)  {0};
\node[] at (3,11)  {0};
\node[] at (4,11)  {0};
\node[] at (5,11)  {0};
\node[] at (7,11)  {0};
\node[] at (8,11)  {0};
\node[] at (9,11)  {0};
\node[] at (11,11)  {0};
\node[] at (12,11)  {0};
\node[] at (13,11)  {0};
\node[] at (-2,10)  {$\varphi^{21}(w)$};
\node[] at (1,10)  {0};
\node[] at (2,10)  {0};
\node[] at (3,10)  {0};
\node[] at (4,10)  {0};
\node[] at (5,10)  {0};
\node[] at (6,10)  {0};
\node[] at (7,10)  {0};
\node[] at (8,10)  {0};
\node[] at (9,10)  {0};
\node[] at (10,10)  {0};
\node[] at (12,10)  {0};
\node[] at (13,10)  {0};
\node[] at (-2,9)  {$\varphi^{22}(w)$};
\node[] at (0,9)  {0};
\node[] at (2,9)  {0};
\node[] at (3,9)  {0};
\node[] at (4,9)  {0};
\node[] at (6,9)  {0};
\node[] at (7,9)  {0};
\node[] at (8,9)  {0};
\node[] at (9,9)  {0};
\node[] at (10,9)  {0};
\node[] at (11,9)  {0};
\node[] at (13,9)  {0};
\node[] at (-2,8)  {$\varphi^{23}(w)$};
\node[] at (0,8)  {0};
\node[] at (1,8)  {0};
\node[] at (2,8)  {0};
\node[] at (3,8)  {0};
\node[] at (4,8)  {0};
\node[] at (5,8)  {0};
\node[] at (7,8)  {0};
\node[] at (8,8)  {0};
\node[] at (9,8)  {0};
\node[] at (10,8)  {0};
\node[] at (11,8)  {0};
\node[] at (12,8)  {0};
\node[] at (-2,7)  {$\varphi^{24}(w)$};
\node[] at (1,7)  {0};
\node[] at (2,7)  {0};
\node[] at (3,7)  {0};
\node[] at (4,7)  {0};
\node[] at (5,7)  {0};
\node[] at (6,7)  {0};
\node[] at (8,7)  {0};
\node[] at (9,7)  {0};
\node[] at (10,7)  {0};
\node[] at (11,7)  {0};
\node[] at (12,7)  {0};
\node[] at (13,7)  {0};
\node[] at (-2,6)  {$\varphi^{25}(w)$};
\node[] at (0,6)  {0};
\node[] at (2,6)  {0};
\node[] at (3,6)  {0};
\node[] at (4,6)  {0};
\node[] at (5,6)  {0};
\node[] at (6,6)  {0};
\node[] at (7,6)  {0};
\node[] at (9,6)  {0};
\node[] at (10,6)  {0};
\node[] at (11,6)  {0};
\node[] at (13,6)  {0};
\node[] at (-2,5)  {$\varphi^{26}(w)$};
\node[] at (0,5)  {0};
\node[] at (1,5)  {0};
\node[] at (3,5)  {0};
\node[] at (4,5)  {0};
\node[] at (5,5)  {0};
\node[] at (6,5)  {0};
\node[] at (7,5)  {0};
\node[] at (8,5)  {0};
\node[] at (9,5)  {0};
\node[] at (10,5)  {0};
\node[] at (11,5)  {0};
\node[] at (12,5)  {0};
\node[] at (-2,4)  {$\varphi^{27}(w)$};
\node[] at (0,4)  {0};
\node[] at (1,4)  {0};
\node[] at (2,4)  {0};
\node[] at (4,4)  {0};
\node[] at (5,4)  {0};
\node[] at (6,4)  {0};
\node[] at (8,4)  {0};
\node[] at (9,4)  {0};
\node[] at (10,4)  {0};
\node[] at (11,4)  {0};
\node[] at (12,4)  {0};
\node[] at (13,4)  {0};
\node[] at (-2,3)  {$\varphi^{28}(w)$};
\node[] at (0,3)  {0};
\node[] at (1,3)  {0};
\node[] at (2,3)  {0};
\node[] at (3,3)  {0};
\node[] at (4,3)  {0};
\node[] at (5,3)  {0};
\node[] at (6,3)  {0};
\node[] at (7,3)  {0};
\node[] at (9,3)  {0};
\node[] at (10,3)  {0};
\node[] at (11,3)  {0};
\node[] at (13,3)  {0};
\node[] at (-2,2)  {$\varphi^{29}(w)$};
\node[] at (1,2)  {0};
\node[] at (2,2)  {0};
\node[] at (3,2)  {0};
\node[] at (5,2)  {0};
\node[] at (6,2)  {0};
\node[] at (7,2)  {0};
\node[] at (8,2)  {0};
\node[] at (9,2)  {0};
\node[] at (10,2)  {0};
\node[] at (11,2)  {0};
\node[] at (12,2)  {0};
\node[] at (-2,1)  {$\varphi^{30}(w)$};
\node[] at (0,1)  {0};
\node[] at (1,1)  {0};
\node[] at (2,1)  {0};
\node[] at (3,1)  {0};
\node[] at (4,1)  {0};
\node[] at (6,1)  {0};
\node[] at (7,1)  {0};
\node[] at (8,1)  {0};
\node[] at (10,1)  {0};
\node[] at (11,1)  {0};
\node[] at (12,1)  {0};
\node[] at (13,1)  {0};
\node at (0,32) {$0$};
\node at (0,0) {$9$};
\node at (1,32) {$1$};
\node at (1,0) {$7$};
\node at (2,32) {$2$};
\node at (2,0) {$3$};
\node at (3,32) {$3$};
\node at (3,0) {$3$};
\node at (4,32) {$4$};
\node at (4,0) {$4$};
\node at (5,32) {$5$};
\node at (5,0) {$6$};
\node at (6,32) {$6$};
\node at (6,0) {$4$};
\node at (7,32) {$7$};
\node at (7,0) {$4$};
\node at (8,32) {$8$};
\node at (8,0) {$6$};
\node at (9,32) {$9$};
\node at (9,0) {$4$};
\node at (10,32) {$10$};
\node at (10,0) {$3$};
\node at (11,32) {$11$};
\node at (11,0) {$3$};
\node at (12,32) {$12$};
\node at (12,0) {$7$};
\node at (13,32) {$13$};
\node at (13,0) {$9$};
\end{tikzpicture}

 \end{center}
\caption{Orbit board of $10000000001000\in X_{14}$.
 A snake consists of positions of $1$'s which are marked by the circles.}
\label{tab:snake}
\end{table}

By modifying the argument by Joseph and Roby\cite{JR}
using a notion Haddadan\cite{haddadan2014some} dubs {\em snakes},
we construct a bijection between the orbits of $(X_{N+1,m},\varphi)$ and
those of $(Y_N, \rho)$.

\begin{exm}
 When $m=3$,
 $(Y_6,\rho)$ has three orbits $:$
 \[
  Y_6 = \{111111\} \cup \{110,011\} \cup \{101\},
 \]
 and so does $(X_7,\varphi):$
 \begin{eqnarray*}
  X_7 & = &\{1000010,0100001,0010000,0001000,0000100\}  \\
  & &  \cup
 \{1000100,0000010,1000001,0100000,0010001,0000000\}
 \cup
 \{1000000,0100010,0000001\}.
 \end{eqnarray*}
 We construct an explicit bijection between these
 sets of orbits.
\end{exm}

\begin{dfn}
Let $S\in X_{N+1}$ be a word and $q$ be the length of $\varphi$-orbit of $S$.
Then, define the orbit board $(S(i,j))_{0\leq i < q, 0\leq j\leq N}$ for the word $S$,
by
\[
 S(i,j) = \varphi^i(w)_j,
\]
where we consider $i$ to be $\bmod q$, but $j$ is not considered to be $\bmod$ anything.
\end{dfn}

\begin{lem}
 \label{lem:snake}
 \begin{enumerate}
  \item When $S(i,j)=1$ and $j\neq N-1$,
	either $S(i,j+m+1)=1$ or $S(i+1,j+1)=1$, and never both.
  \item When $S(i,j)=1$ and $j\neq 0$,
	either $S(i,j-m-1)=1$ or $S(i-1,j-1)=1$, and never both.
 \end{enumerate}
\end{lem}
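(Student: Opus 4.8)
The plan is to prove part~(1) by tracking the single $1$ sitting at position $j$ through the composition $\varphi=\tau_N\circ\cdots\circ\tau_0$ one toggle at a time, and then to deduce part~(2) from part~(1) by a reversal symmetry of $\varphi$ analogous to~$(\ref{eq:fund})$. Write $w=\varphi^i(S)$ and, for $0\le k\le N+1$, let $v^{(k)}=\tau_{k-1}\circ\cdots\circ\tau_0(w)$, so that $v^{(0)}=w$, $v^{(N+1)}=\varphi(w)=\varphi^{i+1}(S)$, and coordinate $\ell$ of $v^{(k)}$ is already frozen at its final value $\varphi(w)_\ell$ once $k>\ell$. Because $w\in X_{N+1}$ and $w_j=1$, the window inequalities force $w_{j-m}=\cdots=w_{j-1}=0$ and $w_{j+1}=\cdots=w_{j+m}=0$ for the indices lying in $\{0,\ldots,N\}$. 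The first point to check is that the toggles $\tau_0,\ldots,\tau_{j-1}$ cannot turn any of the positions $j-m,\ldots,j-1$ into a $1$: when $\tau_k$ acts (with $j-m\le k\le j-1$) the coordinate $j$ still carries the original $1$, and some genuine constraint window $[k',k'+m]$ with $k'+m\le N$ contains both $k$ and $j$, so flipping position $k$ up to $1$ would violate that window. Hence $v^{(j)}$ still has $0$'s in positions $j-m,\ldots,j-1$, and $\tau_j$ merely lowers the $1$ at position $j$ to $0$ (lowering a bit never breaks a window), so $\varphi(w)_j=0$ and the $1$ leaves column $j$.

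Next I would examine $\tau_{j+1}$ acting on $v^{(j+1)}$, assuming $j$ is not the rightmost column. At this stage every coordinate of $v^{(j+1)}$ lying in $\{j-m,\ldots,j+m\}\cap\{0,\ldots,N\}$ equals $0$, while coordinate $j+m+1$ still equals $w_{j+m+1}$. If $j+m+1\le N$ and $w_{j+m+1}=1$, then the window $[j+1,j+m+1]$ forbids the toggle, so $\tau_{j+1}$ leaves position $j+1$ at $0$ and $\varphi(w)_{j+1}=0$; hence $S(i+1,j+1)=0$ while $S(i,j+m+1)=1$. Otherwise, that is, $w_{j+m+1}=0$ or $j+m+1$ out of range, every constraint window through $j+1$ has all of its remaining entries equal to $0$, so $\tau_{j+1}$ flips position $j+1$ up to $1$ and $\varphi(w)_{j+1}=1$; hence $S(i+1,j+1)=1$ while $S(i,j+m+1)$ is $0$ (or undefined). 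In both cases exactly one of $S(i,j+m+1)=1$ and $S(i+1,j+1)=1$ holds, which is part~(1).

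For part~(2) the plan is to use the identity $\varphi^{-1}={\rm Rev}\circ\varphi\circ{\rm Rev}$ on $X_{N+1}$, which holds because ${\rm Rev}\circ\tau_k\circ{\rm Rev}=\tau_{N-k}$ and reversing the order of a product of involutions inverts the product. From this, the orbit board of ${\rm Rev}(S)$ is the orbit board of $S$ with the row index replaced by its negative modulo $q$ and the column index $j$ replaced by $N-j$; applying part~(1) to ${\rm Rev}(S)$ and transporting it back through this reflection converts the forward rule ``the $1$ at $(i,j)$ continues to $(i,j+m+1)$ or to $(i+1,j+1)$'' into the backward rule ``the $1$ at $(i,j)$ comes from $(i,j-m-1)$ or from $(i-1,j-1)$'', the excluded right-hand boundary column of part~(1) becoming the excluded column $j=0$ of part~(2). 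The step I expect to be the main obstacle is the bookkeeping in the second paragraph: establishing that no $1$ has been introduced just to the left of $j$ by the earlier toggles, pinning down the window $[j+1,j+m+1]$ as the unique one deciding the behaviour of $\tau_{j+1}$, and handling the boundary cases where $j$ or $j+m+1$ is near an end of the word. Once those are settled, the ``exactly one, never both'' dichotomy follows at once.
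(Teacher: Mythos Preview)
Your argument for part~(1) is essentially the paper's own: both track the word $\varphi^i(S)$ through the toggles $\tau_0,\ldots,\tau_j,\tau_{j+1}$, observe that positions $j-m,\ldots,j$ become $0$ while positions $j+1,\ldots,j+m+1$ are still untouched, and then split on the value of $S(i,j+m+1)=w_{j+m+1}$. Your write-up is actually more careful than the paper's about why the earlier toggles cannot create a $1$ just to the left of $j$ and about the boundary case $j+m+1>N$; the paper leaves both of these implicit and appeals to a figure.

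Where you diverge is part~(2). The paper proves it directly by a second toggle-tracking argument on row $i-1$: it shows that if $S(i-1,j-1)=1$ then applying $\tau_0,\ldots,\tau_{j-m-1}$ forces $S(i,j-m-1)=0$, while if both $S(i-1,j-1)=0$ and $S(i,j-m-1)=0$ then $\tau_{j-1}$ would raise position $j-1$ in row $i$, contradicting $S(i,j)=1$. Your route via the identity $\varphi^{-1}=\mathrm{Rev}\circ\varphi\circ\mathrm{Rev}$ (which follows exactly as you say, since $\mathrm{Rev}\circ\tau_k\circ\mathrm{Rev}=\tau_{N-k}$ and each $\tau_k$ is an involution) is a genuine alternative and is cleaner: it reduces part~(2) to part~(1) at the cost of one short symmetry check, rather than repeating the case analysis. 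The trade-off is that the paper's direct argument keeps everything local to one orbit board, whereas yours passes through the orbit board of $\mathrm{Rev}(S)$; but since the translation $T(i',j')=S(-i',N-j')$ you describe is a bijection carrying the forward successor rule to the backward one, nothing is lost. Either approach is fine, and the boundary bookkeeping you flag as the main obstacle is already handled correctly in your second paragraph.
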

\begin{proof}
 \begin{enumerate}
  \item 
	Suppose that  $S(i,j)=1$, that is, $\varphi^i(w)_j=1$. Then,
	$S(i+1,k)$ is determined after sequentially applying the toggle map $\tau_0,\tau_1,\ldots,\tau_k$
	to $\varphi^i(w)$.
	Thus, it is obvious that $S(i+1,k)=0$ for $j-m \leq k \leq j$.
	If $S(i,j+m+1)=\varphi^i(w)_{j+m+1}=1$, then  
	\[
	S(i+1,j+1) = \tau_{j+1}\left(\tau_{j}\circ\cdots\circ\tau_0\circ\varphi^i(w)\right)_{j+1}=0.
	\]
	See the left part of Figure $\ref{fig:10m1}$.
	If $S(i,j+m+1)=0$, then  
	$S(i+1,j+1) = 1$.
	See the right part of Figure $\ref{fig:10m1}$.

 \begin{center}
  \begin{figure}[H]
   \begin{center}
    \begin{tikzpicture}[scale=0.4]
     \draw(0,0) -- (11,0);
     \draw(0,0) -- (0,-7);
     \foreach \x in {1,2,...,10}{
     \draw[gray,line width=0.3](\x,0) -- (\x,-7);
     }
     \foreach \y in {1,2,...,6}{
     \draw[gray,line width=0.3](0,-\y) -- (11,-\y);
     }
     \draw (6.5,0) node[above] {\tiny $j$};
     \draw (0,-3.5) node[left] {\tiny $i$};
     \draw (10.5,0) node[above] {\tiny $j+m+1$};
     \draw[dashed] (10.5,0) -- (10.5,-3);
     \draw[dashed] (6.5,0) -- (6.5,-3);
     \draw[dashed] (0,-3.5) -- (6,-3.5);
     \draw (6.5,-3.5) node {$1$};
     \draw (10.5,-3.5) node {$1$};
     \draw (3.5,-4.5) node {$0$};
     \draw (4.5,-4.5) node {$0$};
     \draw (5.5,-4.5) node {$0$};
     \draw (6.5,-4.5) node {$0$};
     \draw (7.5,-4.5) node {$0$};
     \draw (8.5,-4.5) node {$0$};
     \draw (9.5,-4.5) node {$0$};
     \draw (10.5,-4.5) node {$0$};
     \draw[<->](7,-3.5)--(10,-3.5);
     \draw (8.5,-3.5)node[above]{\small$m$};
     \draw[<->](3,-3.5)--(6,-3.5);
     \draw (4.5,-2.5)node[below]{\small$m$};
     \draw[red,line width=1] (6,-3) -- ++(1,0) -- ++(0,-1) -- ++(-1,0) -- ++(0,1);
     \draw[red,line width=1] (10,-3) -- ++(1,0) -- ++(0,-1) -- ++(-1,0) -- ++(0,1);
     \begin{scope}[xshift=15cm]
           \draw(0,0) -- (11,0);
     \draw(0,0) -- (0,-7);
     \foreach \x in {1,2,...,10}{
     \draw[gray,line width=0.3](\x,0) -- (\x,-7);
     }
     \foreach \y in {1,2,...,6}{
     \draw[gray,line width=0.3](0,-\y) -- (11,-\y);
     }
     \draw (6.5,0) node[above] {\tiny $j$};
     \draw (10.5,0) node[above] {\tiny $j+m+1$};
     \draw (0,-3.5) node[left] {\tiny $i$};
     \draw[dashed] (6.5,0) -- (6.5,-3);
     \draw[dashed] (10.5,0) -- (10.5,-3);
     \draw[dashed] (0,-3.5) -- (6,-3.5);
     \draw (6.5,-3.5) node {$1$};
     \draw (7.5,-4.5) node {$1$};
     \draw (3.5,-4.5) node {$0$};
     \draw (4.5,-4.5) node {$0$};
     \draw (5.5,-4.5) node {$0$};
     \draw (6.5,-4.5) node {$0$};
     \draw (7.5,-3.5) node {$0$};
     \draw (8.5,-3.5) node {$0$};
     \draw (9.5,-3.5) node {$0$};
     \draw (10.5,-3.5) node {$0$};
     \draw[<->](8,-4.5)--(11,-4.5);
     \draw (9.5,-4.5)node[below]{\small$m$};
     \draw[<->](3,-3.5)--(6,-3.5);
     \draw (4.5,-2.5)node[below]{\small$m$};
     \draw[red,line width=1] (6,-3) -- ++(1,0) -- ++(0,-1) -- ++(-1,0) -- ++(0,1);
     \draw[red,line width=1] (7,-4) -- ++(1,0) -- ++(0,-1) -- ++(-1,0) -- ++(0,1);
     \end{scope}

    \end{tikzpicture}
    \caption{}\label{fig:10m1}
   \end{center}
  \end{figure}
 \end{center}

  \item 
	Suppose that  $S(i,j)=1$, that is, $\varphi^i(w)_j=1$. Then
	$S(i,j-k) = \varphi^i(i)_{j-k} = 0$ for  $k=1,2,\ldots,m$.
	If $S(i-1,j-1)=\varphi^i(w)_{j-m-1}=1$, then  $S(i,j-m-1)=0$. See 
	the left part of Figure $\ref{fig:4}$.
	Assume  $S(i-1,j-1)=0$ and $S(i-1,j-m-1)=0$, then  we have $S(i,j-1)=1$,
	which contradicts the assumption $S(i,j)=1$. See the right part of Figure $\ref{fig:4}$.

 \begin{center}
  \begin{figure}[H]
   \begin{center}
    \begin{tikzpicture}[scale=0.4]
     \draw(0,0) -- (11,0);
     \draw(0,0) -- (0,-7);
     \foreach \x in {1,2,...,10}{
     \draw[gray,line width=0.3](\x,0) -- (\x,-7);
     }
     \foreach \y in {1,2,...,6}{
     \draw[gray,line width=0.3](0,-\y) -- (11,-\y);
     }
     \draw (7.5,0) node[above] {\tiny $j$};
     \draw (3.5,0) node[above] {\tiny $j-m-1$};
     \draw (0,-4.5) node[left] {\tiny $i$};
     \draw[dashed] (7.5,0) -- (7.5,-3);
     \draw[dashed] (3.5,0) -- (3.5,-4);
     \draw[dashed] (0,-4.5) -- (3,-4.5);
     \draw (6.5,-3.5) node {$1$};
     \draw (7.5,-4.5) node {$1$};
     \draw (3.5,-4.5) node {$0$};
     \draw (4.5,-4.5) node {$0$};
     \draw (5.5,-4.5) node {$0$};
     \draw (6.5,-4.5) node {$0$};
     \draw (7.5,-3.5) node {$0$};
     \draw (8.5,-3.5) node {$0$};
     \draw (9.5,-3.5) node {$0$};
     \draw (10.5,-3.5) node {$0$};
     \draw[<->](3,-3.5)--(6,-3.5);
     \draw (4.5,-2.5)node[below]{\small$m$};
     \draw[red,line width=1] (6,-3) -- ++(1,0) -- ++(0,-1) -- ++(-1,0) -- ++(0,1);
     \draw[red,line width=1] (7,-4) -- ++(1,0) -- ++(0,-1) -- ++(-1,0) -- ++(0,1);
     \draw[line width=1] (3,-4) -- ++(1,0) -- ++(0,-1) -- ++(-1,0) -- ++(0,1);

     \begin{scope}[xshift=15cm]
           \draw(0,0) -- (11,0);
     \draw(0,0) -- (0,-7);
     \foreach \x in {1,2,...,10}{
     \draw[gray,line width=0.3](\x,0) -- (\x,-7);
     }
     \foreach \y in {1,2,...,6}{
     \draw[gray,line width=0.3](0,-\y) -- (11,-\y);
     }
     \draw (7.5,0) node[above] {\tiny $j$};
     \draw (3.5,0) node[above] {\tiny $j-m-1$};
     \draw (0,-4.5) node[left] {\tiny $i$};
     \draw[dashed] (7.5,0) -- (7.5,-3);
     \draw[dashed] (3.5,0) -- (3.5,-4);
     \draw[dashed] (0,-4.5) -- (3,-4.5);
     \draw (6.5,-3.5) node {$0$};
     \draw (7.5,-4.5) node {$1$};
     \draw (6.5,-4.5) node {$1$};
     \draw (3.5,-4.5) node {$0$};
     \draw (4.5,-4.5) node {$0$};
     \draw (5.5,-4.5) node {$0$};
     \draw (7.5,-3.5) node {$0$};
     \draw (8.5,-3.5) node {$0$};
     \draw (9.5,-3.5) node {$0$};
     \draw (10.5,-3.5) node {$0$};
     \draw[<->](3,-3.5)--(6,-3.5);
     \draw (4.5,-2.5)node[below]{\small$m$};
     \draw[line width=1] (6,-3) -- ++(1,0) -- ++(0,-1) -- ++(-1,0) -- ++(0,1);
     \draw[red,line width=1] (7,-4) -- ++(1,0) -- ++(0,-1) -- ++(-1,0) -- ++(0,1);
     \draw[red,line width=1] (6,-4) -- ++(1,0) -- ++(0,-1) -- ++(-1,0) -- ++(0,1);
     \draw[line width=1] (3,-4) -- ++(1,0) -- ++(0,-1) -- ++(-1,0) -- ++(0,1);
     \end{scope}
    \end{tikzpicture}
    \caption{}\label{fig:4}
   \end{center}
  \end{figure}
 \end{center}

 \end{enumerate}
\end{proof}

\begin{dfn}
 By Lemma $\ref{lem:snake}$, if $S(i,j)=1$, then 
a sequence $s = \left((r_0,j_0), (r_1,j_1),\ldots,(r_n,j_n)\right)$ containing
$(i,j)$ which has the following properties is uniquely determined.
\begin{enumerate}
 \item $j_0=0$, and $j_n=N$.
 \item $S(r_k,j_k) = 1$ for $k=0,1,\ldots,n$.
 \item 
       $(r_{k},j_k)-(r_{k-1},j_{k-1})\in\left\{(1,1),(0,m+1)\right\} \mbox{~~for~~}
       k=1,2,\ldots, n.$
\end{enumerate}
We call $s$ the {\em snake} containing $(i,j)$.
Since $j_k-j_{k-1}\in\{1,m+1\}$,
we obtain an $N-1$'s composition $(j_1-j_0)(j_2-j_1)\cdots(j_n-j_{n-1})$
whose parts are in $\{1,m+1\}$.
We call this composition the {\em snake composition} of $s$.

Let $c\in \{1,m+1\}^*$ be a snake composition. Then we can transform 
$c$ into a word in $\{0,1\}^*$ by replacing $m+1$ with $0$,
which we denote $\tilde{c}$.
\end{dfn}

\begin{exm}
 \label{exm:snake}
 Table $\ref{tab:snake}$ shows the orbit board of
 $10000000001000\in X_{14}$.
 A snake consists of positions of $1$'s which are marked by the circles.
 There are $9$ snakes in this orbit board.
 The snake compositions of these snakes are
 \[
  1    4    1    1    1    1    4,~ 
  1    1    1    1    4    4    1,~ 
  1    4    4    1    1    1    1,~ 
  4    1    1    1    1    4    1,~ 
  1    1    1    1    4    1    4,~ 
  1    4    1    4    1    1    1,~ 
  1    4    1    1    1    4    1,~ 
  1    1    1    4    1    4    1,~ 
  4    1    4    1    1    1    1.
 \]
 If we replace the digits $4$ in the above compositions
 with $0$, we obtain the $\rho$-orbit of $w=1011110$
 which we have already seen in Example $\ref{exm:first}$.
 We will explain this correspondence.
\end{exm}

\begin{lem}
 \label{lem:consecsnake}
 Suppose $S(i,j)=1$ and $S(i+2,j-d)=1$ with $m \leq d\leq 2m$ and $j\neq N-1$.
 Then, by Lemma $\ref{lem:snake}$, exactly one of $S(i+1,j+1)=1$ and $S(i,j+m+1)=1$ occurs,
 for each of which we have the following
 \begin{enumerate}
  \item If $S(i+1,j+1)=1$, then $S(i+3,j-d+1)=1$.
  \item If $S(i,j+m+1)=1$, then $S(i+2,j-d+m+1)=1$.
 \end{enumerate}
\end{lem}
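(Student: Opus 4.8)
The plan is to reduce both assertions to the value of a single entry of the orbit board, and then read that entry off by following the toggle maps $\tau_0,\tau_1,\dots$ along one row. The only tools needed are two elementary ``shadow'' facts about the orbit board, obtained by the same bookkeeping as in the proof of Lemma~\ref{lem:snake}: if $S(r,c)=1$ then
\[
 S(r+1,c')=0\ \text{ for all } c'\in[c-m,\,c],\qquad\text{and}\qquad S(r-1,c')=0\ \text{ for all } c'\in[c,\,c+m].
\]
The first of these is literally the line ``$S(i+1,k)=0$ for $j-m\le k\le j$'' in the proof of Lemma~\ref{lem:snake}; the second holds because, in order for $\tau_c$ to install the $1$ at position $c$ of $\varphi^{r}$, positions $c,c+1,\dots,c+m$ of $\varphi^{r-1}$ must already be $0$.

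Next I would apply Lemma~\ref{lem:snake}(1) to the cell $(i+2,j-d)$, which is legitimate since $0\le j-d\le j-m<N-1$: exactly one of $S(i+2,j-d+m+1)=1$ and $S(i+3,j-d+1)=1$ holds. Hence assertion~(1) of Lemma~\ref{lem:consecsnake} is equivalent to $S(i+2,j-d+m+1)=0$, and assertion~(2) is equivalent to $S(i+2,j-d+m+1)=1$; the hypothesis $m\le d\le 2m$ enters the whole argument only through the containment $j-d+m+1\in[\,j-m+1,\ j+1\,]$. So the lemma comes down to deciding the single entry $S(i+2,j-d+m+1)$ in the two cases.

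If $S(i+1,j+1)=1$: the first shadow fact applied to $(i+1,j+1)$ gives $S(i+2,c')=0$ for every $c'\in[\,j-m+1,\ j+1\,]$, so by the containment above $S(i+2,j-d+m+1)=0$, i.e.\ $S(i+3,j-d+1)=1$, which is~(1). If $S(i,j+m+1)=1$ (so also $S(i+1,j+1)=0$): I would first show that $\varphi^{i+1}$ is identically $0$ on the block $[\,j-d,\ j+m+1\,]$ — the second shadow fact at $(i+2,j-d)$ clears $[\,j-d,\ j-d+m\,]$, the first at $(i,j)$ clears $[\,j-m,\ j\,]$, and the first at $(i,j+m+1)$ clears $[\,j+1,\ j+m+1\,]$, and $m\le d\le 2m$ makes the first two overlap, so the union is $[\,j-d,\ j+m+1\,]$, which in turn contains $[\,j-d,\ j-d+2m+1\,]$. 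Then I would follow the computation of row $i+2$ from row $i+1$, i.e.\ the application of $\tau_0,\tau_1,\dots$ in increasing order to $\varphi^{i+1}$: since $S(i+2,j-d)=1$ and only $\tau_{j-d}$ can change position $j-d$, the toggle $\tau_{j-d}$ must fire and put a $1$ at $j-d$; the next toggles $\tau_{j-d+1},\dots,\tau_{j-d+m}$ are blocked by that new $1$ and leave positions $j-d+1,\dots,j-d+m$ equal to $0$; and when $\tau_{j-d+m+1}$ acts, its left window $[\,j-d+1,\ j-d+m\,]$ is $0$, its right window $[\,j-d+m+2,\ j-d+2m+1\,]$ still coincides with $\varphi^{i+1}$ and hence is $0$ by the claim, and position $j-d$ is at distance $m+1$; so $\tau_{j-d+m+1}$ fires, giving $S(i+2,j-d+m+1)=1$, which is~(2).

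The step I expect to be the crux is the toggle-tracking in the second case: one must be sure that the toggles strictly between $\tau_{j-d}$ and $\tau_{j-d+m+1}$ do nothing and that no later toggle disturbs position $j-d+m+1$. This is routine in the style of the proof of Lemma~\ref{lem:snake}, but it is where the real content sits. Boundary indices cause no trouble: in case~(2) the hypothesis $S(i,j+m+1)=1$ already forces $j+m+1\le N$, and in case~(1) the hypothesis $S(i+1,j+1)=1$ together with $j\ne N-1$ forces $j\le N-2$, so every column index appearing above is a legitimate column of the board.
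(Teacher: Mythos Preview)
Your proof is correct and follows essentially the same strategy as the paper's: establish blocks of zeros in rows $i+1$ and $i+2$ via the two ``shadow'' facts, then invoke Lemma~\ref{lem:snake} at the cell $(i+2,j-d)$. Your reduction of both cases to the value of the single entry $S(i+2,j-d+m+1)$ is a slightly cleaner organization than the paper's, and your explicit toggle-tracking in Case~2 spells out a step the paper leaves terse (the paper simply records the zeros in row $i+1$ and asserts the conclusion).
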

\begin{proof}
 \begin{enumerate}
  \item 
	If $S(i+1,j+1)=1$, then we have
	\begin{equation}
	 \label{eq:conseczeros}
	  S(i+2,j-d+1)=S(i+2,j-d+2)=\cdots = S(i+2,j+1) = 0.
	\end{equation}
	In fact, since $S(i+2,j-d)=1$, we have 
	\begin{equation}
	 \label{eq:conseczeros1}
	  S(i+2,j-d+1)=S(i+2,j-d+2)=\cdots = S(i+2, j-d+m)=0,
	\end{equation}
	and since $S(i+1,j+1)=1$, we have
	\begin{equation}
	 \label{eq:conseczeros2}
	  S(i+2,j)=S(i+2,j-1)=\cdots = S(i+2,j+1-m)=0.
	\end{equation}
	Since $2m-d\geq 0$, $(j+1-m) - (j-d+m) = 1-(2m-d)  \leq 1$ and we obtain $(\ref{eq:conseczeros})$.
	Then Lemma $\ref{lem:snake}$ implies $S(i+3,j-d+1)=1$. (See Figure $\ref{fig:case1}$.)
 \begin{center}
  \begin{figure}[H]
   \begin{center}
    \begin{tikzpicture}[scale=0.4]
     \draw(0,0) -- (10,0);
     \draw(0,0) -- (0,-7);
     \foreach \x in {1,2,...,9}{
     \draw[gray,line width=0.3](\x,0) -- (\x,-7);
     }
     \foreach \y in {1,2,...,6}{
     \draw[gray,line width=0.3](0,-\y) -- (10,-\y);
     }
     \draw (6.5,0) node[above] {\tiny $j$};
     \draw (0,-3.5) node[left] {\tiny $i$};
     \draw (0,-5.5) node[left] {\tiny $i+2$};
     \draw[dashed] (6.5,0) -- (6.5,-3);
     \draw[dashed] (1.5,0) -- (1.5,-5);
     \draw[dashed] (0,-3.5) -- (6,-3.5);
     \draw[dashed] (0,-5.5) -- (1,-5.5);
     \draw (6.5,-3.5) node {$1$};
     \draw (7.5,-4.5) node {$1$};
     \draw (1.5,-5.5) node {$1$};
     \draw (2.5,-6.5) node {$1$};
     \draw (2.5,-5.5) node {$0$};
     \draw (3.5,-5.5) node {$0$};
     \draw (4.5,-5.5) node {$\cdots$};
     \draw (5.5,-5.5) node {$0$};
     \draw (6.5,-5.5) node {$0$};
     \draw (1.5,0) node[above] {\tiny $j-d$};
     \draw[red,line width=1] (6,-3) -- ++(1,0) -- ++(0,-1) -- ++(-1,0) -- ++(0,1);
     \draw[red,line width=1] (7,-4) -- ++(1,0) -- ++(0,-1) -- ++(-1,0) -- ++(0,1);
     \draw[blue,line width=1] (1,-5) -- ++(1,0) -- ++(0,-1) -- ++(-1,0) -- ++(0,1);
     \draw[blue,line width=1] (2,-6) -- ++(1,0) -- ++(0,-1) -- ++(-1,0) -- ++(0,1);
    \end{tikzpicture}
    \caption{Positions of 1's in Case 1}\label{fig:case1}
   \end{center}
  \end{figure}
 \end{center}
	
  \item If $S(i,j+m+1)=1$, then we have
	\[
	 S(i+1,j-k) = S(i+1, j-d+1) = \cdots S(i+1, j+m+1)=0,
	\]
	and therefore $S(i+2, j-d+m)=1$. (See Figure $\ref{fig:case2}$.)
 \end{enumerate}
\end{proof}

 \begin{center}
  \begin{figure}[H]
   \begin{center}
    \begin{tikzpicture}[scale=0.4]
     \draw(0,0) -- (11,0);
     \draw(0,0) -- (0,-7);
     \foreach \x in {1,2,...,10}{
     \draw[gray,line width=0.3](\x,0) -- (\x,-7);
     }
     \foreach \y in {1,2,...,6}{
     \draw[gray,line width=0.3](0,-\y) -- (11,-\y);
     }
     \draw (6.5,0) node[above] {\tiny $j$};
     \draw (0,-3.5) node[left] {\tiny $i$};
     \draw (0,-5.5) node[left] {\tiny $i+2$};
     \draw[dashed] (6.5,0) -- (6.5,-3);
     \draw[dashed] (1.5,0) -- (1.5,-5);
     \draw[dashed] (0,-3.5) -- (6,-3.5);
     \draw[dashed] (0,-5.5) -- (1,-5.5);
     \draw (6.5,-3.5) node {$1$};
     \draw (10.5,-3.5) node {$1$};
     \draw (1.5,-5.5) node {$1$};
     \draw (1.5,-4.5) node {$0$};
     \draw (2.5,-4.5) node {$0$};
     \draw (3.5,-4.5) node {$0$};
     \draw (4.5,-4.5) node {$0$};
     \draw (5.5,-4.5) node {$\cdots$};
     \draw (6.5,-4.5) node {$0$};
     \draw (7.5,-4.5) node {$0$};
     \draw (8.5,-4.5) node {$0$};
     \draw (9.5,-4.5) node {$0$};
     \draw (10.5,-4.5) node {$0$};
     \draw (5.5,-5.5) node {$1$};
     \draw (1.5,0) node[above] {\tiny $j-d$};
     \draw[<->](7,-3.5)--(10,-3.5);
     \draw (8.5,-3.5)node[above]{\small$m$};
     \draw[<->](2,-5.5)--(5,-5.5);
     \draw (3.5,-5.5)node[below]{\small$m$};
     \draw[red,line width=1] (6,-3) -- ++(1,0) -- ++(0,-1) -- ++(-1,0) -- ++(0,1);
     \draw[red,line width=1] (10,-3) -- ++(1,0) -- ++(0,-1) -- ++(-1,0) -- ++(0,1);
     \draw[blue,line width=1] (1,-5) -- ++(1,0) -- ++(0,-1) -- ++(-1,0) -- ++(0,1);
     \draw[blue,line width=1] (5,-5) -- ++(1,0) -- ++(0,-1) -- ++(-1,0) -- ++(0,1);
    \end{tikzpicture}
    \caption{Positions of 1's in Case 2}\label{fig:case2}
   \end{center}
  \end{figure}
 \end{center}

\begin{lem}
 \label{lem:snaketail}
 Suppose $S(i,N-1)=1$.
 Then, there exists a unique $d$ such that
 $m \leq d\leq 2m$ and $S(i+2,N-1-d)=1$, and
 \begin{enumerate}
  \item \label{lem:snaketail1}
	If $d > m$, then  we have 
	\[
	S(i+2,N-d+m) = S(i+3,N-d+m+1) = \cdots = S(i+1+d-m, N-1) = 1.
	\]
  \item \label{lem:snaketail2}
	If $d = m$, then we have
	\[
	S(i+2,N-m-1) = S(i+3, N-m) = \cdots = S(i+2+m,N-1)=1.
	\]
 \end{enumerate}
\end{lem}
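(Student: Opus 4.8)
The plan is to prove Lemma~\ref{lem:snaketail} by a direct, local analysis of the orbit board in the columns close to $N$ and the rows $i,i+1,\dots,i+2+m$, in exactly the spirit of the proofs of Lemma~\ref{lem:snake} and Lemma~\ref{lem:consecsnake}: one pins down as much of the configuration near the right edge as the hypothesis forces, and then reads off the remaining entries from the toggle rule together with membership in $X_{N+1}$. Since Lemma~\ref{lem:snake} explicitly excludes the column $j=N-1$, this really is a boundary argument that cannot be deduced from those two lemmas and has to be carried out from scratch, although Lemma~\ref{lem:snake} will be reused to propagate the resulting snake.

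First I would fix rows $i$ and $i+1$. From $S(i,N-1)=1$ and $\varphi^i(w)\in X_{N+1}$ the windows $w_{[N-m-1,N-1]}$ and $w_{[N-m,N]}$ force $S(i,j)=0$ for $j\in\{N-m-1,\dots,N-2\}\cup\{N\}$. Sweeping $\tau_0,\dots,\tau_N$ over $\varphi^i(w)$: none of $\tau_{N-m-1},\dots,\tau_{N-2}$ can create a $1$, because while they act the window $w_{[N-m-1,N-1]}$ still carries the $1$ at column $N-1$; then $\tau_{N-1}$ erases that $1$; and $\tau_N$ installs a $1$ at column $N$ since $w_{[N-m,N]}$ is by then all zero except that position. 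Hence $S(i+1,j)=0$ for $j\in\{N-m-1,\dots,N-1\}$ and $S(i+1,N)=1$. This is the boundary analogue of Lemma~\ref{lem:snake}, and it also records the last edge $(i,N-1)\to(i+1,N)$ of the snake through $(i,N-1)$.

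Next I would produce the integer $d$. Repeating the same type of sweep for $\varphi^{i+2}(w)=\varphi(\varphi^{i+1}(w))$, and using that in $\varphi^{i+1}(w)$ the columns $N-m-1,\dots,N-1$ are $0$ while column $N$ is $1$ (so any toggle $\tau_{N-m},\dots,\tau_{N-1}$ that would deposit a $1$ in those columns is blocked by the still-present $1$ at column $N$, erased only by the final $\tau_N$), I would show there is exactly one column $c$ with $N-2m-1\le c\le N-m-1$ and $S(i+2,c)=1$. Uniqueness is immediate: $\{N-2m-1,\dots,N-m-1\}$ is a window of width $m+1$ and carries at most one $1$ of $\varphi^{i+2}(w)$. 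For existence one tracks how the $1$'s of $\varphi^{i+1}(w)$ lying in columns $\le N-m-2$ are relocated by the sweep; the rightmost relocated $1$ is forced to land in that window, since pushing it any further right would put two $1$'s within distance $m$, the obstruction being the $1$ at column $N$. Setting $d:=N-1-c$ yields $m\le d\le 2m$.

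Finally I would read off the chain. Knowing $S(i+2,N-1-d)=1$ and, from the same sweep, the exact pattern of rows $i+1$ and $i+2$ near column $N$, I would apply Lemma~\ref{lem:snake} repeatedly to the snake through $(i+2,N-1-d)$: the zeros produced in those rows force it through precisely the sequence of $(0,m+1)$- and $(1,1)$-steps recorded in the two cases (a horizontal step followed by $d-m-1$ diagonal steps when $d>m$, and $m$ consecutive diagonal steps when $d=m$), so that a $1$ appears in column $N-1$ at row $i+1+d-m$, respectively $i+2+m$. The main obstacle is the existence half of the previous paragraph: one must control the cascade of relocated $1$'s carefully, splitting according to the length of the run of $0$'s preceding the terminal $1$ of $\varphi^{i+1}(w)$ (equivalently, whether column $N-m-2$ of $\varphi^{i+1}(w)$ carries a $0$ or a $1$), to be sure exactly one $1$ falls into $[N-2m-1,N-m-1]$; diagrams in the style of Figures~\ref{fig:case1}--\ref{fig:case2} keep this bookkeeping under control.
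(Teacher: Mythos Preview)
Your overall plan---pin down rows $i$ and $i+1$ near the right edge, locate the unique $1$ in row $i+2$ in a window of width $m+1$, then propagate via Lemma~\ref{lem:snake}---is exactly the paper's plan. The paper likewise says that uniqueness is immediate, that items~\ref{lem:snaketail1}--\ref{lem:snaketail2} follow from existence ``by an argument parallel to Lemma~\ref{lem:consecsnake}'', and then argues existence by case analysis (on $S(i,N-2-m)$ rather than on the corresponding entry of row $i+1$).

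There is, however, a concrete error in your execution. You read the orbit board as having columns $0,\dots,N$ (the paper's definition does say $S\in X_{N+1}$, so this is understandable), but in the context of this lemma the rightmost column is $N-1$: compare the example of Table~\ref{tab:snake}, Theorem~\ref{thm:secondMain} (``$S\in X_N$''), and the very statement of Lemma~\ref{lem:snake}, which excludes $j=N-1$. Your Step~1 therefore manufactures a spurious entry $S(i+1,N)=1$, and in Step~2 you use that phantom $1$ to block the toggles $\tau_{N-m},\dots,\tau_{N-1}$, concluding that $S(i+2,j)=0$ for all $j\in\{N-m,\dots,N-1\}$. But for $d>m$ item~\ref{lem:snaketail1} asserts $S(i+2,N-d+m)=1$ with $N-d+m\in\{N-m,\dots,N-1\}$; so your Step~2 flatly contradicts the conclusion you are heading for, and also contradicts your Step~3, where you claim the snake at $(i+2,N-1-d)$ takes a horizontal step to $(i+2,N-d+m)$.

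Once you drop the nonexistent column $N$, row $i+1$ simply ends in $m+1$ zeros and there is no blocking $1$; the analysis of row $i+2$ then has to proceed as in the paper, by casing on whether $S(i,N-2-m)=1$ (equivalently on the entry just left of that block of zeros) to control how far left the zeros in row $i+1$ extend, and only then reading off the unique $1$ in row $i+2$ in the window $[N-1-2m,\,N-1-m]$.
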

\begin{proof}
 The uniqueness of such $d$ is clear and
 $\ref{lem:snaketail1}$ and $\ref{lem:snaketail2}$ follows
 easily from the existence of such $d$ and Lemma $\ref{lem:snake}$
 by an argument parallel to Lemma $\ref{lem:consecsnake}$.
 Therefore we show the existence:
 It is clear that 
 \[
  S(i,N-2)=S(i,N-3)=\cdots=S(i,N-1-m)=0.
 \]
 If $S(i,N-2-m)=1$, then
 $S(i+1,N-2-2m)=S(i+1,N-1-2m)=S(i+1,N-2m)=\cdots = S(i+1,N-1)=0$.
 Therefore, there exists exactly one $d$ such that
 $S(i+2,N-1-d)=1$ and $m\leq d\leq 2m$.

 \begin{center}
  \begin{figure}[H]
   \begin{center}
    \begin{tikzpicture}[scale=0.4]
     \draw(10,1) -- (10,-5);
     \foreach \y in {0,1,...,4}{
     \draw[lightgray] (10,-\y) -- (10-10,-\y);
     }
     \foreach \x in {1,...,9}{
     \draw[lightgray] (\x,1) -- (\x,-5);
     }
     \draw (9.5,-0.5) node {$1$};
     \draw (10-4-0.5,-0.5) node {$1$};
     \foreach \x in {3,...,8}{
     \draw (\x+0.5,-1.5) node {$\cdot$};
     }
     \draw (9+0.5,-1.5) node {$0$};
     \draw (2+0.5,-1.5) node {$0$};
     \draw[red] (3,-2) -- ++(4,0) -- ++(0,-1) -- ++(-4,0) -- cycle;
     \draw[<->](10,-2.5) -- (10-3,-2.5);
     \draw (10-1.5,-2.5) node[below] {\small $m$};
     \draw[<->](10-5,-.) -- (10-2,-.);
     \draw[](10-3.5,-.) node[above]{\small $m$};
     \draw (10+0.8,-0.5) node {\tiny $i$};
     \draw (10+0.8,-1.5) node {\tiny $i+1$};
     \draw (10+0.8,-2.5) node {\tiny $i+2$};
     \draw (8.5,-0.5) node {$0$};
     \draw (7.5,-0.5) node {$\cdots$};
     \draw (6.5,-0.5) node {$0$};
    \end{tikzpicture}
   \end{center}
  \end{figure}
 \end{center}

\end{proof}

\begin{thm}
 \label{thm:secondMain}
 Let $s$ be the snake containing $(i,0)$ in the $\varphi$-orbit board of $S\in X_N$. %$S\in\{0,1\}^*$.
 Let $c$ be the snake composition of $s$ and 
 $i'$ be the least integer greater than $i$ for which $S(i',0)=1$.
 Assume $c$ starts with $k$-repetition of $1$, i.e., $\stackrel{k}{\overbrace{11\cdots{1}}}$.
 Then,
\begin{enumerate}
 \item\label{thm:kgtm}  If $k\geq m$, then $i'=i+m+2$.
 \item\label{thm:kltm} If $k< m$, then $i'=i+k+2$.
 \item\label{thm:tog2rot} Let $\lambda$ be the snake composition of the snake containing $(i,0)$ and let
       $\lambda'$ be the snake composition of the snake containing $(i',0)$.
       Then we have
      \begin{equation}
       \label{eq:tog2rot}
	\rho(\tilde{\lambda}) = \tilde{\lambda'},
      \end{equation}
       where $\tilde{\lambda}$ is the word in $\{0,1\}^*$ obtained from $\lambda$ by replacing
       $m+1$ with $0$.
\end{enumerate}

\end{thm}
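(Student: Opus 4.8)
The statement has two halves — parts (\ref{thm:kgtm})--(\ref{thm:kltm}) locate the next row $i'$ carrying a $1$ in column $0$, and part (\ref{thm:tog2rot}) then identifies the whole new snake composition — and I would treat them in that order, since the argument for (\ref{thm:tog2rot}) uses the location of $i'$.

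\emph{Parts 1 and 2.} I would argue directly on column $0$ of the orbit board. That $c$ begins with $k$ copies of $1$ means the snake $s$ opens with $k$ diagonal $(1,1)$-steps, so $S(i+r,r)=1$ for $r=0,1,\dots,k$, and its $(k{+}1)$-st step is a $(0,m+1)$-step when $k<m$ (producing the vertex $(i+k,\,k+m+1)$) or one further diagonal step when $k\ge m$. Let $v$ be the distinguished vertex of $s$ lying at column $d$: $v$ at row $i+m$ with $d=m$ if $k\ge m$, and $v$ at row $i+k$ with $d=k+m+1$ if $k<m$; write $r_v$ for the row of $v$. Using that $\tau_0$ deletes any $1$ from column $0$ in one step and re-creates one there exactly when columns $0,\dots,m$ of the previous row are all $0$, together with Lemma \ref{lem:snake} (its forward clause to exclude a $1$ at column $k+1$ one row later, its backward clause to exclude spurious $1$'s at columns $1,\dots,m$), I would check: (i) rows $i+1,\dots,r_v$ each still carry a $1$ in columns $0,\dots,m$, namely the diagonal $1$ of $s$ at column $r-i\le\min(k,m)\le m$; and (ii) row $r_v+1$ is entirely $0$ on columns $0,\dots,m$, because there $s$ sits at a column $>m$ and the entries at columns $0,\dots,m$ are forced to be $0$ by the constraint and by Lemma \ref{lem:snake}. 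Then $\tau_0$ inserts a $1$, so $S(r_v+2,0)=1$ while no earlier row past $i$ carries a $1$ in column $0$; hence $i'=r_v+2$, which is $i+m+2$ if $k\ge m$ and $i+k+2$ if $k<m$.

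\emph{Part 3.} The vertex $v=(r_v,d)$ now satisfies $S(r_v,d)=1$, $S(r_v+2,0)=S(i',0)=1$ and $m\le d\le 2m$ (here $d=k+m+1\le 2m$ uses $k<m$; $d=m$ otherwise). From this configuration I would iterate Lemma \ref{lem:consecsnake} along $s$: it makes a $(1,1)$-step of $s$ match a $(1,1)$-step of the snake $s'$ through $(i',0)$, a $(0,m+1)$-step match a $(0,m+1)$-step, and it reproduces the displacement $d\in[m,2m]$ after each step, so the iteration runs until the $s$-vertex reaches the last column $N-1$ — which, since columns strictly increase along a snake, happens only at its final vertex. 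Consequently the opening parts of the composition of $s'$ equal the part of $c$ after $v$; equivalently $\tilde{\lambda'}$ begins with the suffix of $\tilde{\lambda}$ obtained by deleting its leading $\overbrace{1\cdots1}^{k}0$ when $k<m$, or its leading $\overbrace{1\cdots1}^{m}$ when $k\ge m$ — in both cases exactly the prefix of $\rho(\tilde{\lambda})$. When the tracking reaches the final vertex of $s$ it has produced a vertex of $s'$ at column $N-1-d$, and since $S(\,\cdot\,,N-1)=1$ on $s$ there, Lemma \ref{lem:snaketail} applies; its uniqueness clause identifies its $d$ with ours, and its two cases spell out the turn-around of $s'$ at the right edge: one $(0,m+1)$-step followed by $d-m-1=k$ diagonal steps when $d>m$ (appending a $0$ and then $k$ ones, the case $k<m$), or $m$ diagonal steps when $d=m$ (appending $m$ ones, the case $k\ge m$). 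Concatenating the two phases gives $\tilde{\lambda'}=\rho(\tilde{\lambda})$, i.e.\ $(\ref{eq:tog2rot})$.

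\emph{Anticipated obstacle.} The delicate step is this splice: one must verify that the iteration of Lemma \ref{lem:consecsnake} may be carried up to but not past the final vertex of $s$, so that the hypothesis $j\ne N-1$ of Lemma \ref{lem:snake} is never violated, and that the column-$(N-1-d)$ point it outputs is precisely the point governed by Lemma \ref{lem:snaketail}. A convenient feature is that the two shapes of the tail of $\rho(\tilde\lambda)$ — a $0$ then $k$ ones, versus $m$ ones — fall out automatically from the two cases of Lemma \ref{lem:snaketail}, so no further case work is needed there. Two minor items would be dispatched separately: the degenerate case in which the first $(0,m+1)$-step of $s$ already reaches column $N-1$, so that the parallel-tracking phase is empty and one appeals to Lemma \ref{lem:snaketail} directly; and the boundary arithmetic in parts 1--2 when $N$ is comparable to $m$.
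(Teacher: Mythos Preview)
Your proposal is correct and follows essentially the same route as the paper: for parts (\ref{thm:kgtm})--(\ref{thm:kltm}) you trace the diagonal $1$'s of $s$ down to a row of zeros in columns $0,\dots,m$ and then invoke the toggle at column $0$, exactly as the paper does; for part (\ref{thm:tog2rot}) you set up the displacement $d$ (equal to $m$ or $k+m+1$), iterate Lemma~\ref{lem:consecsnake} to match the middle of $s'$ step-for-step with the tail of $s$, and close with Lemma~\ref{lem:snaketail} to produce the final $0\,1^k$ or $1^m$ block --- this is precisely the paper's argument, and your explicit tracking of $d$ and the splice at column $N-1$ is slightly more detailed than what the paper writes out.
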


\begin{proof}
 Without loss of generality, we can assume that $i=0$.

 If $k \geq m$, then 
 \[
  S(1,1) = S(2,2) = \cdots = S(m,m) = 1,
 %\varphi^1(S)_1 = \varphi^2(S)_2 = \cdots = \varphi^{m}(S)_{m} = 1,
 \]
 and hence
 %$\varphi^i(S)_j = 0 \mbox{ for } i\leq m, 0\leq j < i$.
 $S(i,j) = 0 \mbox{ for } i\leq m, 0\leq j < i$.
 Therefore
 \[
 %(\varphi^{m+1}(S)_0,\varphi^{m+1}(S)_1,\ldots,\varphi^{m+1}(S)_m)=(0,0,\ldots,0).
 S(m+1,0) = S(m+1,1) = \cdots  = S(m+1,m) = 0.
 %(\varphi^{m+1}(S)_0,\varphi^{m+1}(S)_1,\ldots,\varphi^{m+1}(S)_m)=(0,0,\ldots,0).
 \]
 %This implies $\varphi^{m+2}(S)_0=1$, which proves $\ref{thm:kgtm}$
 This implies $S(m+2,0)=1$, which proves $\ref{thm:kgtm}$.

 If $k < m$, then 
 \[
  S(1,1)=S(2,2)=\cdots =S(k,k) =S(k,k+m+1)=1.
 \]
 and hence
 $S(k+1,0) = S(k+1,1) = \cdots = S(k+1,k+m+1) = 0$.
 This implies $S(k+2,0)=1$, which proves $\ref{thm:kltm}$.

 \bigskip
 We next prove $\ref{thm:tog2rot}$.
 Let $s=\left((0,j_0),(r_1,j_1),\ldots,(r_t,j_t)\right)$.
 If $k < m$, then 
 \[
  S(k,k)=S(k,k+m+1)=S(k+2,0)=1,
 \]
 and $(r_k,j_k)=(k,k)$ and $(r_{k+1},j_{k+1})=(k,k+m+1)$.
 In particular, the snake composition $c$ is expressed as
 \begin{eqnarray*}
  \lambda &= &(j_{1}-j_{0})(j_{2}-j_{1})\cdots(j_{t}-j_{t-1})\\
  &= &\stackrel{k}{\overbrace{11\cdots1}}(m+1)(j_{k+2}-j_{k+1})(j_{k+3}-j_{k+2})\cdots(j_{t}-j_{t-1}).
 \end{eqnarray*}
 Then, by Lemma $\ref{lem:consecsnake}$ and $\ref{lem:snaketail}$, the snake 
 composition $c'$ of the snake $s'$
 starting from $(k+2,0)$ is 
 \[
 (j_{k+2}-j_{k+1}) (j_{k+3}-j_{k+2}) \cdots (j_{t}-j_{t-1})(m+1)\stackrel{k}{\overbrace{11\cdots1}}.
 \]
 Therefore, $(\ref{eq:tog2rot})$ holds.
 If $k\geq m$, then 
 \[
  S(m,m)=S(m+2,0)=1.
 \]
 By Lemma $\ref{lem:consecsnake}$ and $\ref{lem:snaketail}$,
 the snake composition $\lambda'$ starting from $(m+2,0)$ is
 \[
 (j_{m+1}-j_{m}) (j_{m+2}-j_{m+1}) \cdots (j_{t}-j_{t-1})\stackrel{m}{\overbrace{11\cdots1}}.  
 \]
\end{proof}

\begin{thm}
\label{thm:thirdmain}
For every $S\in X_N$ and $i\in\{0,1,\ldots,n-1\}$
\[
 \sum_{t=0}^{q-1}\varphi^t(S)_i
 =
 \sum_{t=0}^{q-1}\varphi^t(S)_{N-1-i},
\]
where $q$ is the length of the $\varphi$-orbit of $S$
and $\varphi^t(S)_i$ is the $i$-th digit of the word $\varphi^t(S)$.
\end{thm}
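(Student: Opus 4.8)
The plan is to transport the statement, via the snake correspondence of Theorem~\ref{thm:secondMain}, into a statement about the left and right cumulative sums along a single $\rho$-orbit, whereupon it follows from Theorem~\ref{thm:main}. (There is no shortcut through the reflection $S\mapsto{\rm Rev}(S)$ — which conjugates $\varphi$ to $\varphi^{-1}$ — because a $\varphi$-orbit need not be closed under ${\rm Rev}$, exactly as the Remark notes for $\rho$.)

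First I would fix $S\in X_N$, let $q$ be its $\varphi$-period, and form the orbit board $(S(i,j))_{0\le i<q,\ 0\le j\le N-1}$. By Lemma~\ref{lem:snake} the $1$'s of the board split into snakes, each running from column $0$ to column $N-1$; since the columns $0=\sigma_0<\sigma_1<\cdots<\sigma_n=N-1$ visited by a snake are strictly increasing, a snake meets each column at most once, so
\[
 \sum_{t=0}^{q-1}\varphi^t(S)_j=\#\{\text{snakes of the board meeting column }j\},
\]
and the analogous identity holds for column $N-1-j$. By Theorem~\ref{thm:secondMain}, passing from the snake through $(i,0)$ to the snake through $(i',0)$, where $i'$ is the next row with $S(i',0)=1$, is a single cyclic permutation of the snakes of the board, and under $s\mapsto\tilde{\lambda}_s$ (the snake composition of $s$ with each part $m+1$ replaced by $0$) it becomes the generalized rotation $\rho$. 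Hence all the $\tilde{\lambda}_s$ share one length $n$, with $a(w)+(m+1)b(w)=N-1$ and $n=a(w)+b(w)$ for $w=\tilde{\lambda}_s$, and the multiset $\{\tilde{\lambda}_s\}$ equals $c$ copies of one $\rho$-orbit $\{\rho^0(w),\dots,\rho^{p-1}(w)\}$, where $c=P/p$ is an integer and $P$ is the number of $1$'s in column $0$ of the board. The factor $c$ depends only on the orbit, not on $j$, and will cancel.

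Next I would recast ``a snake meets column $j$'' arithmetically. If $w=\tilde{\lambda}_s$, then the $k$-th part of the snake composition of $s$ is $1$ when $w_{k-1}=1$ and $m+1$ when $w_{k-1}=0$, so its $l$-th partial sum is $\sigma_l=(m+1)l-m\sum_{i=0}^{l-1}w_i$. Thus a snake with word $\rho^k(w)$ meets column $j$ iff $(m+1)l-m\sum_{i=0}^{l-1}\rho^k(w)_i=j$ for some $l\in\{0,1,\dots,n\}$, and that $l$ is then unique. Summing over the snakes of the board and grouping by word gives
\[
 \sum_{t=0}^{q-1}\varphi^t(S)_j=c\sum_{\substack{0\le v\le l\le n\\(m+1)l-mv=j}}\nu_{L^{(l)}}(v),
\]
summands with no admissible cumulative sum being zero. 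For column $N-1-j$, note that $\sigma_n=N-1$, so a snake meets column $N-1-j$ iff $\sigma_n-\sigma_l=j$ for some $l$, i.e.\ iff a suffix of its composition sums to $j$; applying the partial-sum formula to the reversed composition — which is the composition of the word ${\rm Rev}(\rho^k(w))$ — and recalling that $R^{(l)}$ records the sums $\sum_{i=0}^{l-1}\rho^k(w)_{n-1-i}$, I obtain
\[
 \sum_{t=0}^{q-1}\varphi^t(S)_{N-1-j}=c\sum_{\substack{0\le v\le l\le n\\(m+1)l-mv=j}}\nu_{R^{(l)}}(v)
\]
over the \emph{same} set of pairs $(l,v)$. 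Finally, Theorem~\ref{thm:main}, together with the trivial case $l=n$ (where $L^{(n)}$ and $R^{(n)}$ each consist of $p$ copies of $a(w)$), yields $\nu_{L^{(l)}}(v)=\nu_{R^{(l)}}(v)$ for all $l,v$, so the two displays agree term by term.

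The main obstacle is the middle pair of steps. One has to check with care that every $1$ of the orbit board lies on a unique snake reaching both boundary columns; that ``next snake'' is genuinely a single cyclic permutation realizing the entire $\rho$-orbit, so that the multiplicity $c$ is well defined and independent of $j$; and — the real crux — that the geometric event ``the snake passes through column $j$'' is equivalent to the arithmetic condition $(m+1)l-mv=j$ on the left, respectively right, cumulative sums. Once this dictionary is in place, the conclusion is immediate from Theorem~\ref{thm:main}.
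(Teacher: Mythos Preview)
Your proposal is correct and follows essentially the same route as the paper's own proof: translate the column-sum $\sum_t\varphi^t(S)_j$ into a count of snakes meeting column $j$, convert the snake compositions via Theorem~\ref{thm:secondMain} into a $\rho$-orbit, rewrite ``snake meets column $j$'' as the arithmetic condition $(m+1)l-mv=j$ on the left (resp.\ right) cumulative sum, and finish with Theorem~\ref{thm:main}. Your treatment is in fact slightly more careful than the paper's in two places --- you make explicit the multiplicity factor $c$ (allowing the number of snakes to be a proper multiple of the $\rho$-period), and you handle the boundary case $l=n$ separately --- but these are refinements of the same argument rather than a different approach.
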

\begin{proof}
 Let $s_0,s_1,\ldots, s_{p-1}$ be the snakes in the orbit board of $S$
 and $\lambda_0,\lambda_1,\ldots, \lambda_{p-1}$ be the corresponding snake compositions.
 Then, by Theorem $\ref{thm:secondMain}$, there exists a
 $\{0,1\}$-composition $w$ which satisfies
 \[
 \tilde{\lambda}_0 = w,~ \tilde{\lambda}_1 = \rho(w),~ \ldots, ~\tilde{\lambda}_{p-1}=\rho^{p-1}(w).
 \]
 Suppose that $\varphi^t(S)_i = 1$ and $(t,i)$ is contained in 
 a snake $s_k$.
 Let the snake composition of $s_k$ be $\lambda_k=u_0u_1\cdots u_{n-1}$.
 Then, we have some index $j$ such that 
 $i=u_{0}+u_{1}+\cdots+u_{j-1}$,
 which is equal to 
 \[
 (m+1)j - m\left(
  \rho^k(w)_0 +
  \rho^k(w)_1 +
 \cdots +
 \rho^k(w)_{j-1}
 \right).
 \]
 This implies
 \[
  \rho^k(w)_0 +
  \rho^k(w)_1 +
 \cdots +
 \rho^k(w)_{j-1}
 =
 j - \frac{i-j}{m}.
 \]
 Therefore
 \[
 \sum_{t=0}^{q-1}\varphi^t(S)_i
 =
 \sum_{j=0}^{|w|-1} \#\left\{k\,\middle|\,\sum_{\nu =0}^{j-1}\rho^k(w)_\nu = j - \frac{i-j}{m}\right\}
 =
 \sum_{j=0}^{|w|-1} \nu_{L^{(j)}}\left(j - \frac{i-j}{m} \right).
 \]
 In the same manner, we can show
 \[
  \sum_{t=0}^{q-1}\varphi^t(S)_{N-1-i}
 =
 \sum_{j=0}^{|w|-1} \nu_{R^{(j)}}\left(j - \frac{i-j}{m} \right).
 \]
 By Theorem $\ref{thm:main}$, we are done.
\end{proof}

\begin{exm}
 We can compute the bottom line,
 $9, 7, 3, 3, 4, 6,4,\ldots, 9$ of the Table $\ref{tab:snake}$
 from the table in Figure $\ref{fig:freqtab}$
 as follows,
\[
 \begin{array}{llll}
  9 = \nu_{L^{(0)}}\left(0\right) + \nu_{L^{(3)}}\left(4\right),& 
   7 = \nu_{L^{(1)}}\left(1\right) + \nu_{L^{(4)}}\left(5\right),&
   3 = \nu_{L^{(2)}}\left(2\right),& 
   3 = \nu_{L^{(3)}}\left(3\right), \\
  4 = \nu_{L^{(1)}}\left(0\right) + \nu_{L^{(4)}}\left(4\right),&
   6 = \nu_{L^{(2)}}\left(1\right) + \nu_{L^{(5)}}\left(5\right),&
   4 = \nu_{L^{(3)}}\left(2\right) + \nu_{L^{(6)}}\left(6\right),
 \end{array}
\]
%\begin{eqnarray*}
%  \sum_{k=0}^{30}\varphi^k(w)_0
%   &= &
% \nu_{L^{(0)}}\left(0\right)
% +
% \nu_{L^{(3)}}\left(4\right)
% =
% 9,\\
%  \sum_{k=0}^{30}\varphi^k(w)_1
%   &= &
% \nu_{L^{(1)}}\left(1\right)
% +
% \nu_{L^{(4)}}\left(5\right)
% =
% 7,\\
%  \sum_{k=0}^{30}\varphi^k(w)_2
% & = &
% \nu_{L^{(2)}}\left(2\right)
% =
% 3,
% \\
%  \sum_{k=0}^{30}\varphi^k(w)_3
%   & =&
% \nu_{L^{(3)}}\left(3\right)
% =
% 3,\\
%  \sum_{k=0}^{30}\varphi^k(w)_4
%   & = &
% \nu_{L^{(1)}}\left(0\right)
% +
% \nu_{L^{(4)}}\left(4\right)
% =
% 4,\\
%  \sum_{k=0}^{30}\varphi^k(w)_5
%   & = &
% \nu_{L^{(2)}}\left(1\right)
% +
% \nu_{L^{(5)}}\left(5\right)
% =
% 6,\\
% \sum_{k=0}^{30}\varphi^k(w)_6
%  & = &
% \nu_{L^{(3)}}\left(2\right)
% +
% \nu_{L^{(6)}}\left(6\right)
% =
% 4,
%\end{eqnarray*}
and so forth.
\end{exm}

%Let $s$ be the snake containing $(0,0)$ in the orbit board of $S\in X_N$
%and $c$ the snake composition corresponding to $s$ and
%let $\pi(\tilde{c})=(r_0\cdots r_{k}, b_0b_1\cdots b_{n_q})$.
%Let $d$ be the smallest positive integer such that
%$r_dr_{d+1}\cdots r_kr_0\cdots r_{d-1}=r_0\cdots r_k$, i.e.,
%$d$ is the cardinality of the necklace of $r_0\cdots r_k$.
%Let $e$ be the cardinality of the necklace containing $b_0\cdots b_{n_q}$.
%Then, the length of the $\varphi$-orbit of $S$
%is
%\[
% d(m+2)\sum_{i=0}^{e-1} b_i + k'\sum_{i=0}^{d-1}(r_i+2),
%\]
%where $k'=e-\sum_{i=0}^{e-1}b_i$ is the number of
%zeros in $b_0b_1\cdots b_{e-1}$.
\begin{exm}
 When $m=3$ the set $X_{14}$  has
 cardinality $131$.
 By using Proposition $\ref{prop:orbitsize}$, we can  efficiently decompose
 $X_N$ into $\varphi$-orbits.

\begin{table}[H]
 \begin{center}
  \begin{tabular}{|l|l|l|l|l|}
   \hline
   $S$ & $\tilde{c}$ & ${\rm bqw}(\tilde{c})$ & ${\rm rw}(\tilde{c})$ & period \\
   \hline
	   10000100001000 & 1111111111111 & 1111 & 1 & 5\\
   10000001000010 & 1111111101 & 110 & 21 & 27\\
   10000010000100 & 1111111110 & 1110 & 00 & 17\\
   10000001000100 & 1101101 & 00 & 221 & 11\\
   10000010000001 & 1111010 & 100 & 110 & 31\\
   10000010000010 & 1111100 & 100 & 200 & 31\\
   10000000000000 & 1000 & 000 & 1000 & 9\\
   \hline
  \end{tabular}
  \caption{$\varphi$-orbits in $X_{14}$ with $m=3$.
  $S$ denotes an element of $X_{14}$ and $c$ is the
  first snake composition appearing in
  the orbit board of $S$.}
 \end{center}
\end{table}
\end{exm}

\section{Concluding remarks}

We have not succeeded in fully generalizing the results on
the number of orbits and the orbit sizes by
Joseph and Roby \cite{JR}. 
Our generalization of \cite{JR} can be considered as the toggle dynamical
systems on {\em more} independent sets.
It seems that results in this paper can be further generalized.
For example, we can also consider {\em less} independent sets:
Let $Z_N$ be the set defined by
\[
 Z_N = 
 \left\{
 z=z_0 z_1\cdots z_{N-1} \in \{0,1\}^N
 \,\middle|\,
 z_i + z_{i+1} + \cdots + z_{i+m} \leq m
 \mbox{ for }
 i = 0,1,\ldots, N-m
 \right\}.
\]
In other words, $Z_N$ is the set 
consisting of the words of length $N$ which do not contain 
$m+1$ consecutive $1$s as its subword.
Therefore $X_N = Z_N$ when $m=1$.
Numerical experiments suggests that the toggle dynamical system
on $Z_N$ has the same symmetric property as $X_N$.
However, we have not succeeded in finding the objects corresponding to  the {\em snakes}
on the orbit board of $z\in Z_N$.

\section*{Acknowledgment}
The authors thank Tom Roby for helpful discussions and suggestions.
They deeply thank the anonymous reviewers  who carefully read
through the manuscript and gave many helpful suggestions.

%\bibliographystyle{plain}
%\bibliography{toggling}

\begin{thebibliography}{1}

\bibitem{cameron1995orbits}
Peter~J Cameron and Dmitry~G Fon-der-Flaass.
\newblock Orbits of antichains revisited.
\newblock {\em European Journal of Combinatorics}, 16(6):545--554, 1995.

\bibitem{haddadan2014some}
Shahrzad Haddadan.
\newblock Some instances of homomesy among ideals of posets.
\newblock {\em arXiv preprint arXiv:1410.4819}, 2014.

\bibitem{JR}
Michael Joseph and Tom Roby.
\newblock Toggling independent sets of a path graph.
\newblock {\em Electronic Journal of Combinatorics}, 25-1, 2018.

\bibitem{StanleyEC1}
Richard~P Stanley.
\newblock {\em Enumerative Combinatorics Volume 1 second edition}.
\newblock Cambridge University Press, 2011.

\bibitem{striker2016rowmotion}
Jessica Striker.
\newblock Rowmotion and generalized toggle groups.
\newblock {\em Discrete Mathematics and Theoretical Computer Science}, 20.1, 2018.

\bibitem{striker2012promotion}
Jessica Striker and Nathan Williams.
\newblock Promotion and rowmotion.
\newblock {\em European Journal of Combinatorics}, 33(8):1919--1942, 2012.

\end{thebibliography}

\end{document}